\documentclass[11pt,reqno]{amsart}

\usepackage[a4paper,margin=1in]{geometry}
\usepackage{amsmath,amssymb,amsthm,mathtools}
\usepackage{enumitem}
\usepackage{tikz-cd}
\usepackage{mathrsfs}
\usepackage{extarrows}
\usepackage[colorlinks=true,linkcolor=blue,citecolor=blue,urlcolor=blue]{hyperref}

\setlist{nosep}
\newtheorem{theorem}{Theorem}[section]
\newtheorem{proposition}[theorem]{Proposition}
\newtheorem{lemma}[theorem]{Lemma}
\newtheorem{corollary}[theorem]{Corollary}

\newtheorem{conjecture}[theorem]{Conjecture}

\theoremstyle{remark}
\newtheorem{remark}[theorem]{Remark}
\newtheorem{example}[theorem]{Example}

\DeclareMathOperator{\Spec}{Spec}
\DeclareMathOperator{\Tot}{Tot}
\DeclareMathOperator{\Sym}{Sym}
\DeclareMathOperator{\Chow}{Chow}
\DeclareMathOperator{\ord}{ord}
\DeclareMathOperator{\Pic}{Pic}
\DeclareMathOperator{\Cl}{Cl}
\DeclareMathOperator{\cm}{cm}
\DeclareMathOperator{\pr}{pr}

\newcommand{\A}{\mathbb A}
\newcommand{\C}{\mathbb C}
\newcommand{\OO}{\mathcal O}
\newcommand{\sB}{\mathfrak B}
\newcommand{\sd}{\operatorname{sd}}

\numberwithin{equation}{theorem}

\setlist[enumerate]{label=(\thetheorem.\arabic*), before={\setcounter{enumi}{\value{equation}}}, after={\setcounter{equation}{\value{enumi}}}}

\title[]
{Local obstructions to the surjectivity of spectral morphisms}

\author{Siqi He}
\address{Siqi He, Institute of Mathematics, Academy of Mathematics and Systems Science, Chinese Academy of Sciences, Beijing 100190, China}
\email{\href{mailto:sqhe@amss.ac.cn}{sqhe@amss.ac.cn}}

\author{Jie Liu}
\address{Jie Liu, Institute of Mathematics, Academy of Mathematics and Systems Science, Chinese Academy of Sciences, Beijing 100190, China}
\email{\href{mailto:jliu@amss.ac.cn}{jliu@amss.ac.cn}}

\author{Siqing Zhang}
\address{Siqing Zhang, Max Planck Institute for Mathematics, Vivatsgasse 7, 53111 Bonn, Germany}
\email{\href{mailto:siqing.zhang.math@gmail.com}{siqing.zhang.math@gmail.com}}

\begin{document}

\begin{abstract}
    Chen and Ng\^o showed that the Hitchin morphism for Higgs bundles on a projective manifold $X$ factors through a closed subscheme of the Hitchin base called the spectral base, and the induced map is called the spectral morphism. In this paper, we investigate local Cohen--Macaulay obstructions to the surjectivity of spectral morphisms via a refined spectral correspondence. We show that every affine normal variety with a local ring that admits no rank one maximal Cohen--Macaulay modules can be geometrically realized as a local obstruction to the surjectivity of spectral morphisms, and that these local obstructions can be compactified via the Kawamata covering trick.
	
    As an application of these constructions, we provide counterexamples to the Chen--Ng\^o conjecture on the surjectivity of spectral morphisms for $\operatorname{GL}_r$-Higgs bundles in dimensions $n\geq 9$. Moreover, we construct counterexamples to the Chen--Ng\^o conjecture for $\operatorname{SL}_2$-Higgs bundles in every dimension $n\geq 2$ and for semistable $\operatorname{GL}_3$-Higgs bundles on a smooth projective surface.
\end{abstract}

\maketitle
\tableofcontents

\section{Introduction}

Throughout this paper, we work over the field $\mathbb C$ of complex numbers.

\subsection{Motivation}

Moduli spaces of Higgs bundles and the Hitchin morphism occupy a central place in modern geometry. The nonabelian Hodge correspondence relates representations of the fundamental group of a projective manifold to polystable Higgs bundles with vanishing Chern classes \cite{Corlette1988,Donaldson1987,Hitchin1987SelfDuality,Simpson1988}. These moduli spaces have become important tools in the study of character varieties \cite{Simpson1991,Simpson1992,Simpson1994a,Simpson1997}. For curves, the Hitchin morphism sends a Higgs bundle of rank $r$ to the characteristic coefficients of its Higgs field \cite{Hitchin1987}; the corresponding spectral curve describes the Hitchin fibre via the Beauville--Narasimhan--Ramanan correspondence \cite{BeauvilleNarasimhanRamanan1989}.

In higher dimensions, the situation is more complicated. Let $X$ be a projective manifold. A \emph{$\operatorname{GL}_r$-Higgs bundle} on $X$ is a pair $(\mathscr E,\theta)$, where $\mathscr E$ is a vector bundle of rank $r$ and
\[
\theta\colon \mathscr E\longrightarrow
\mathscr E\otimes\Omega_X^1
\]
is an $\mathscr O_X$-linear morphism satisfying
$\theta\wedge\theta=0$. Locally, writing
$\theta=\sum_i\theta_i\,dz_i$, this condition is equivalent to
$[\theta_i,\theta_j]=0$ for all $i,j$. Thus the characteristic
coefficients of $\theta$ satisfy additional algebraic relations and need not fill the \emph{Hitchin base}
\[
A_X^r\coloneqq
\bigoplus_{i=1}^r H^0\bigl(X,\operatorname{Sym}^i\Omega_X^1\bigr).
\]
Chen and Ng\^o introduced a closed subscheme $S_X^r\subset A_X^r$, called the \emph{spectral base}, through which the
Hitchin morphism factors (see \S\,\ref{ss.Spectralbase} for
$\operatorname{GL}_r$-Higgs bundles and \cite[\S\,5]{ChenNgo2020} for reductive groups and $G$-Higgs bundles). Let $\mathcal H_X^r$ denote the set of isomorphism classes of Higgs bundles of rank $r$ on $X$. The resulting spectral morphism is
\[
\sd_X^r\colon\mathcal H_X^r\longrightarrow S_X^r,
\]
which sends a Higgs bundle to the characteristic coefficients of its Higgs field.

The central conjecture of Chen and Ng\^o is the following. We state it in
the $\operatorname{GL}_r$ case; see \cite{ChenNgo2020} for the formulation
for a general reductive group.

\begin{conjecture}[\protect{\cite[Conjecture 5.2]{ChenNgo2020}}]
\label{conj:chen-ngo-global}
Let $X$ be a projective manifold of dimension $n\geq 2$. For every $r\geq 2$, the spectral morphism $\sd_X^r\colon\mathcal H_X^r\longrightarrow S_X^r$ is surjective.
\end{conjecture}

For $s\in S_X^r$, write $\mathcal H_X^r(s)$ for the fibre of
$\sd_X^r$ over $s$. There has been substantial progress on
Conjecture~\ref{conj:chen-ngo-global} in recent years.

\begin{itemize}
\item Chen and Ng\^o proved that $\mathcal H_X^r(s)\neq\emptyset$ for generically regular spectral data $s$ and $\operatorname{GL}_r$-Higgs bundles on smooth projective surfaces \cite{ChenNgo2020}.

\item Song and Sun proved the conjecture for $\operatorname{GL}_r$-Higgs bundles on every smooth projective surface \cite{SongSun2024}.

\item The first two authors proved the conjecture for $\operatorname{GL}_2$-Higgs bundles on projective manifolds of arbitrary dimension \cite{HeLiu2024}.

\item Huynh proved the conjecture for general reductive groups on ruled surfaces and certain elliptic fibred surfaces \cite{Huynh2025}.

\item Patel and Weissmann obtained positive results for hyperelliptic and $K$-trivial varieties, and more generally for $r$-small varieties \cite{PatelWeissmann2026b,PatelWeissmann2026a}.

\item Sheshmani, Wang, and Xia proved the conjecture for
$\operatorname{SL}_{2m+1}$-Higgs bundles on algebraic surfaces and treated
the general $\operatorname{SL}_r$ and $\operatorname{Sp}_{2r}$ cases for
products of two curves
\cite{SheshmaniWangXia2026}.

\item Sun proved the non-emptiness of the fibres of $\operatorname{sd}_X^r$ over cyclic spectral data \cite{Sun2026}.
\end{itemize}

There is also related work on the spectral base from the viewpoints of
locally symmetric varieties and commuting schemes
\cite{HeLiuMok2024,SheshmaniXiaYuan2025,SongXiaXu2023}.

\subsection{Main results}

Our results concern the Chen--Ng\^o conjecture and its semistable analogue.
We first construct counterexamples to the conjecture for
$\operatorname{GL}_r$-Higgs bundles on projective manifolds.

\begin{theorem}\label{thm:Chen-Ngo-false}
Conjecture~\ref{conj:chen-ngo-global} is false. More precisely, there exists
an integer $r\geq 3$, a projective manifold $X$, and a generically regular
spectral datum $s\in S_X^r$ such that
$\mathcal H_X^r(s)=\emptyset$.
\end{theorem}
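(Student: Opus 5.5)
The plan is to realize a spectral datum whose spectral cover is forced to be a variety $Z\to X$ with a non-Cohen--Macaulay-friendly singularity, then use the refined spectral correspondence to translate Higgs bundles in the fibre into a module-theoretic object on $Z$ that cannot exist. Concretely, for a generically regular $s\in S_X^r$, I expect the spectral correspondence to identify $\mathcal H_X^r(s)$ with the set of coherent sheaves $\mathscr F$ on the spectral variety $Z_s\subset \Tot(\Omega_X^1)$ such that $\pi_*\mathscr F$ is locally free of rank $r$ on $X$. Since $\pi\colon Z_s\to X$ is finite and $X$ is smooth, $\pi_*\mathscr F$ locally free is equivalent to $\mathscr F$ being maximal Cohen--Macaulay on $Z_s$; generic regularity forces $\mathscr F$ to have rank one at generic points of $Z_s$ (taking $Z_s$ integral). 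So $\mathcal H_X^r(s)\neq\emptyset$ implies that every local ring $\OO_{Z_s,z}$ admits a rank one MCM module.

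The local step: choose an affine normal variety $V$ (of dimension $n$, realized with embedding dimension $\le 2n$) whose local ring at a point $v$ admits no rank one MCM module. A natural candidate is a normal isolated singularity where no class in $\Cl(\OO_{V,v})$ is CM — e.g. an isolated factorial (or small class group) non-Cohen--Macaulay singularity, such as a cone over a projective variety with $\Pic=\mathbb Z\cdot H$ and nonvanishing intermediate cohomology $H^i(\OO(mH))$ for all $m$; a cone over an abelian variety of dimension $\geq 2$ is the basic example, since then $\OO_{V,v}$ is not CM and every rank one reflexive module comes from $\Pic$ of the base, all failing CM due to $H^1$ of the abelian variety. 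Then I would realize $V$ locally as a spectral cover: find a finite map $V\to \A^n$ of degree $r$ generically étale and embed $V$ into $\Tot(\Omega^1_{\A^n})\cong\A^{2n}$ over $\A^n$ via coordinates that commute (the Higgs fields are multiplication operators $\theta_i$ by functions $y_i$ on $V$ generating $\OO_V$ over $\OO_{\A^n}$ up to finitely many). The characteristic coefficients of the commuting family $y_i$ acting on the generic fibre define a point of the local spectral base whose spectral scheme is $V$ (normality ensures it equals the closure of the generic cover).

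Globalization: compactify via Kawamata's covering trick. Take a projective manifold $Y$ containing a local model and a finite cover $X\to Y$ so that the needed symmetric differentials exist globally; more directly, choose $X$ with $\Omega_X^1$ sufficiently positive (e.g. a product of high-genus curves or a ball quotient after a cover) so that the local functions $y_i\,dz_i$ extend to global sections producing a spectral variety $Z\subset\Tot(\Omega^1_X)$ which is integral, generically étale over $X$, and analytically isomorphic near one point to $(V,v)$. Then any Higgs bundle in $\mathcal H_X^r(s)$ yields a rank one MCM module over $\widehat{\OO}_{V,v}$, and rank one MCM descends/ascends between $\OO_{V,v}$ and its completion for normal excellent rings (up to checking the completed class group; I would use a singularity where the completion still has no rank one MCM, e.g. by the same cohomological argument on the cone). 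Contradiction.

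The main obstacle is this globalization: controlling the global spectral datum so the spectral variety has exactly the prescribed analytic singularity at one point and nothing worse, while $r$ and $n$ are forced (presumably $n\ge 9$ from embedding the cone-type singularity with $2n$ ambient coordinates and degree constraints). I would first try the covering trick to make $\Omega_X^1$ contain an ample enough subsheaf so that jets of symmetric differentials at a point can be prescribed, then a Bertini-type argument for genericity away from that point.
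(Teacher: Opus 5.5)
There is a genuine gap, and it is in the step you flagged yourself: the globalization. Global spectral data are not arbitrary symmetric differentials. $S_X^r\subset A_X^r$ is a nonlinear closed subscheme, so choosing $s_i\in H^0(X,\Sym^i\Omega_X^1)$ with prescribed jets at a point and then running a Bertini argument gives no reason for $(s_1,\dots,s_r)$ to lie in $S_X^r$. Still less does it make the spectral scheme analytically $(V,v)$ at one point. Also, $\sum_i y_i\,dz_i$ is not a section on $X$ at all; it is a section of the pulled-back cotangent bundle on the cover.

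The workable route is different. You must build a global finite cover $\gamma\colon S\to X$, \'etale-locally equal to your germ at one point, together with a \emph{regular} section $\sigma\in H^0(S,\gamma^*\Omega_X^1)$ giving a codimension-one embedding $S\to T^*X$. Then you apply Proposition~\ref{prop:general-higgs-factors-through-Z}.

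Compactifying $V\to\A^n$ creates poles of $\sigma$, and positivity of $\Omega_X^1$ cannot cure them. Indeed $H^0(S,\gamma^*\Omega_X^1)=H^0(X,\Omega_X^1\otimes\gamma_*\OO_S)$. For example, take a double cover branched along $2\mathscr M$ with $\mathscr M$ big. Bogomolov--Sommese vanishing gives $H^0(X,\Omega_X^1\otimes\mathscr M^{-1})=0$; it also forbids big line subsheaves of $\Omega_X^1$ when $\dim X\ge2$. Hence every such $\sigma$ is pulled back from $X$, and $S\to T^*X$ is never generically injective.

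Kawamata's trick alone does not help either. By the computation in Lemma~\ref{lem:Ramified-Index}, further ramified covers remove the poles of $\sigma$ only when $(d\gamma)(\sigma)$ is already a regular $1$-form; horizontal poles, and normal poles of order $\ge e$, survive every cover. The missing idea is how the paper forces this regularity:
\begin{enumerate}
\item It uses an exact local section $\sigma_F$, with $F=B\sum_j g_jz_j$ on $W=Y\times\A^m$.
\item It adjoins the affine elliptic curve $E=\{v^2=u^3+u+1\}$ and the cover $(w,u,v)\mapsto(q(w),u+F(w))$, with $\sigma_M=(ds-\sigma_F)/v$.
\item Then $(dq_M)(\sigma_M)=du/v$ is pulled back from $\overline E$ and extends over the smooth locus of every normal projective model.
\item Only then does a Kawamata cover, with ramification indices divisible by those of the cover and \'etale near the chosen point, give a regular $\sigma_S$ on a projective $X$.
\end{enumerate}

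Your local input is also wrong. Take the cone over an abelian variety $A$ with $\dim A\ge2$ and any projectively normal $H$. Every $\mathscr L\in\Pic^0(A)\setminus\{\OO_A\}$ is aCM. Indeed, $\mathscr L\otimes H^{q}$ is ample for $q>0$, antiample for $q<0$, and a nontrivial element of $\Pic^0(A)$ for $q=0$, so all its intermediate cohomology vanishes. Hence $\bigoplus_q H^0(A,\mathscr L\otimes H^q)$ is a rank one MCM module, and $H^1(\OO_A)\neq0$ only rules out the trivial class. You need every class of $\Cl(\OO_{V,v})$ to fail, as for the cones of Examples~\ref{ex:bidegree} and~\ref{ex:ma} via Proposition~\ref{prop:cone-acm}. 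Your factorial alternative would also work, but you give no instance, and ``nonvanishing for all $m$'' is impossible by Serre vanishing. Moreover, rank one MCM modules do not descend from $\widehat{\OO}_{V,v}$ to $\OO_{V,v}$ in general, since the class group can grow under completion. The paper needs Flenner's surjectivity $\Cl(\OO_{Y,o_Y})\to\Cl(\widehat{\OO}_{Y,o_Y})$, which is why it takes $\dim Y\ge4$.
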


More precisely, we prove that for every $d\geq 9$, there exists a projective manifold $X$ of dimension $d$, an integer $3\leq r\leq 44$, and a generically regular spectral datum $s\in S_X^{r}$ such that
$\mathcal H_X^{r}(s)=\emptyset$; see Remark~\ref{rmk.Min-Possible}. In view of the surface case \cite{ChenNgo2020,SongSun2024} and the rank two case
\cite{HeLiu2024}, these examples may not be optimal, and Conjecture \ref{conj:chen-ngo-global} may still hold in low rank or low dimension.

We next consider $\operatorname{SL}_2$-Higgs bundles. These are
$\operatorname{GL}_2$-Higgs bundles with trivial determinant and traceless Higgs field. Although the Chen--Ng\^o conjecture holds for $\operatorname{GL}_2$-Higgs bundles \cite{HeLiu2024}, the following result shows that it fails for
$\operatorname{SL}_2$-Higgs bundles in every dimension at least two; see \cite{SheshmaniWangXia2026} for the case of $\operatorname{SL}_r$-Higgs bundles with $r$
odd.

\begin{theorem}
\label{thm.SL_2}
For every $d\geq 2$, there exists a projective manifold $X$ of dimension $d$ and a generically regular spectral datum
$s\in S_X^{\operatorname{SL}_2}$ such that $\mathcal H_X^{\operatorname{SL}_2}(s)=\emptyset$.
\end{theorem}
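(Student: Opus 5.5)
The approach is to reduce non-emptiness of the fibre to a statement about rank one modules on the spectral cover, and then build a global spectral datum whose spectral cover has a singularity where no such module exists.

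\textbf{Refined spectral correspondence for $\operatorname{SL}_2$.} For $\operatorname{SL}_2$ the spectral datum is a single section $s\in H^0(X,\Sym^2\Omega_X^1)$, namely the quadratic differential $-\det\theta$. It is subject to the spectral-base condition, which for rank two traceless fields is essentially that $s$ is locally a square $\omega^2$ of a $1$-form up to the relevant conditions. The spectral cover is then
\[
X_s=\{\lambda^2=s\}\subset \Tot(\Omega_X^1),
\]
a double cover $\pi\colon X_s\to X$ that is generically étale when $s$ is generically regular. A Higgs bundle $(\mathscr E,\theta)$ with $\operatorname{sd}(\mathscr E,\theta)=s$ corresponds to a coherent sheaf $\mathscr F$ on $X_s$ with $\pi_*\mathscr F=\mathscr E$ locally free. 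Since $\pi$ is finite, this means $\mathscr F$ is a rank one maximal Cohen--Macaulay sheaf on $X_s$, at least over the normalisation locus where $X_s$ is integral. The extra $\operatorname{SL}_2$ condition $\det\pi_*\mathscr F\cong\OO_X$ imposes a norm condition on $\mathscr F$.

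\textbf{Local obstruction.} I will choose a point $p\in X$ and arrange that the local ring of $X_s$ over $p$ admits no rank one MCM module, or none compatible with the trivial-determinant constraint. In dimension $d\geq 2$, the natural local model is the $A_1$-type double point. Take local coordinates $z_1,\dots,z_d$ and set $s=f\,(dz_1)^2$ with $f$ a function; the spectral cover is then locally $\lambda^2=f$. If $f$ has an isolated singularity with suitable Hessian, for example $f=z_1^2+\cdots+z_d^2$ with $d\geq 2$ odd enough, the double point $\lambda^2=f$ has class group $0$ or $\mathbb Z/2$. For $d\geq 4$ it is factorial by Grothendieck's theorem, so the only rank one reflexive module is free. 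The pushforward of $\OO_{X_s}$ is then $\OO\oplus\OO$, which is fine, so for $\operatorname{SL}_2$ the obstruction has to come from the determinant. Indeed $\det\pi_*\OO_{X_s}$ is a square root of the branch divisor $(f)$, which need not be trivial.

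The cleaner route for low $d$, including $d=2$, is to use the condition that $s$ be a square of a $1$-form. I will find $s$ that lies in the spectral base but whose local spectral cover has a singularity with no rank one MCM module of the required determinant. For $d=2$, $\operatorname{SL}_2$-spectral data are generically regular quadratic differentials with an everywhere local square root $\omega$ with $d\omega$ conditions. I will pick $s=\omega^2$ where the $1$-form $\omega$ is only a rational square root, so that $s$ is a square locally but not globally. The obstruction then becomes a $\mu_2$-monodromy/parity issue.

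\textbf{Compactification and main obstacle.} The local model will be made global by the Kawamata covering trick: take $X$ a cyclic or Kawamata cover of a projective variety so that the required symmetric differential extends globally and the singularity of $X_s$ persists. Products with $\mathbb P^1$ or abelian factors then raise the dimension to any $d\geq 2$. The main obstacle is the local algebra: exhibiting, for $d=2$, a spectral-base datum whose spectral surface singularity (a $D$- or $A$-type quotient) has all rank one MCM modules of nontrivial determinant. My first attempt will be to compute the class group of the local cover $\lambda^2=f$ and the norm map to $\Pic$ of the punctured base.
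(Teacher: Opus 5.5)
\textbf{There is a genuine gap.} Your plan depends on a point of the spectral cover that admits no rank one MCM module \emph{of the required determinant}. No such point exists for $\operatorname{SL}_2$, so neither your local analysis nor its Kawamata compactification can produce a counterexample.

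For a generically regular traceless datum one has $s=\alpha^2\tau$. The fibre is then governed by the Cohen--Macaulayfication $\widetilde X_s=\{\eta^2=\tau\}\subset\Tot(\mathscr L)$, which is a hypersurface in a smooth variety. Hence $\mathscr O_{\widetilde X_s}$ is itself a rank one MCM module at every point, whatever the singularity; this is exactly why the $\operatorname{GL}_2$ case holds \cite{HeLiu2024}. The only extra $\operatorname{SL}_2$ condition is $\det(p_*\mathscr M)\cong\mathscr O_X$, and it is vacuous over a local ring, where every line bundle is trivial. In the same way, the Picard group of the punctured spectrum of a regular (hence factorial) local ring of dimension $\geq 2$ is $0$. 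So the computation you propose at the end, the norm map to $\Pic$ of the punctured base, can only return zero. (Also, the threefold ordinary double point has class group $\mathbb Z$, not $0$ or $\mathbb Z/2$.)

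\textbf{What is actually needed.} The obstruction has to be global. Your remark about a square root that exists locally but not globally points the right way, but it does not supply the argument. The paper's route is as follows.
\begin{enumerate}
\item Take a connected \'etale double cover $p\colon Y\to X$ with nontrivial $2$-torsion $\mathscr L$ and deck involution $\iota$.
\item Take an $\iota$-anti-invariant $1$-form $\beta$ on $Y$ whose zero locus has codimension $\geq 2$. It descends to $\alpha\in H^0(X,\Omega_X^1\otimes\mathscr L^{-1})$, and $Y\to T^*X$ is an embedding in codimension one defining a traceless datum $s$.
\item Since $Y$ is smooth, Proposition~\ref{prop:general-higgs-factors-through-Z} turns a point of the fibre into a line bundle $\mathscr M$ with $\operatorname{Nm}_{Y/X}(\mathscr M)\cong\mathscr L$. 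Hence $\mathscr M\otimes\iota^*\mathscr M\cong\mathscr O_Y$.
\item If $\operatorname{NS}(Y)$ is torsion-free and $\iota$ acts trivially on it, this forces $c_1(\mathscr M)=0$.
\item If $\iota$ acts by $-1$ on $H^0(Y,\Omega_Y^1)$, then $H^0(X,\Omega_X^1)=0$, so $\Pic^0(X)=0$. Therefore $\operatorname{Nm}(\mathscr M)\cong\mathscr O_X\not\cong\mathscr L$, a contradiction.
\end{enumerate}

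The remaining substantive step is to realize these hypotheses in every dimension $n\geq 2$. The paper takes a very general $\iota_A$-invariant hypersurface in a very general principally polarized abelian $(n+1)$-fold, with $\iota_A(x)=-x+a$. It uses the Lefschetz theorem for $n\geq 3$ and an equivariant Noether--Lefschetz theorem for $n=2$.

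Your dimension-raising step via products is fine. However, it needs such a base example in dimension two, and your proposal never produces one.
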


We refer the reader to \S\,\ref{ss.Sl2-Setup} for the explicit definitions of $S_X^{\operatorname{SL}_2}$ and $\mathcal{H}_X^{\operatorname{SL}_2}(s)$. We construct three families of counterexamples for
$\operatorname{SL}_2$-Higgs bundles: two endoscopic families, on canonically
polarized projective manifolds of dimension at least $2$
(Theorem~\ref{thm:sl2-counterexample}) and on $K$-trivial projective
manifolds of dimension at least $3$
(Theorem~\ref{thm: k trivial counterexamples}), and a non-endoscopic family
in dimensions $n\geq 3$ (Theorem~\ref{thm: non-endo}).

Finally, we consider the analogous question for semistable $\operatorname{GL}_r$-Higgs bundles. Given a polarized projective manifold $(X,H)$, let
$\mathcal H_{X,H}^{r,\mathrm{ss}}(s)$ denote the set of isomorphism classes
of $\mu_H$-semistable Higgs bundles of rank $r$ and with spectral datum $s$. In \cite{HeLiu2024}, it was proved that
$\mathcal H_{X,H}^{2,\mathrm{ss}}(s)\neq\emptyset$ for every
$s\in S_X^2$. The following result shows that this fibre can nevertheless be empty for surfaces and rank $r=3$.

\begin{theorem}\label{thm:polystable-counterexample}
There exists a polarized smooth projective surface $(X,H)$ and a generically regular spectral datum $s\in S_X^3$ such that $\mathcal H_{X,H}^{3,\mathrm{ss}}(s)=\emptyset$.
\end{theorem}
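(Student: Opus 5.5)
We will realize $s$ through its spectral cover and reduce semistable Higgs bundles with datum $s$ to rank one sheaves on that cover satisfying a stability constraint. The key input is the refined spectral correspondence set up in the paper: for a generically regular $s\in S_X^3$ with integral spectral variety $\pi\colon X_s\to X$, finite of degree $3$, Higgs bundles $(\mathscr E,\theta)$ with $\sd_X^3(\mathscr E,\theta)=s$ correspond to torsion-free rank one sheaves $\mathscr L$ on $X_s$ with $\pi_*\mathscr L$ locally free. On a surface $X$ this condition says $\mathscr L$ is maximal Cohen--Macaulay on $X_s$. This is automatic at the points where $X_s$ is normal, but it may fail over non-normal points. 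We therefore look for $X_s$ with a non-normal singular point $p$ whose local ring admits no rank one MCM module of the right kind. Alternatively, $X_s$ could be chosen so that rank one MCM sheaves exist, but every such pushforward is $\mu_H$-unstable. The second route seems better suited to the semistable statement, since the Song--Sun theorem already gives existence without stability on surfaces.

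Concretely, we plan to take $X_s$ reducible, or integral with a normalization $\nu\colon\tilde X_s\to X_s$, arranged so that every rank one MCM sheaf is forced to have a saturated Higgs subsheaf that destabilizes. A natural choice is a generically regular but \emph{non-integral} datum, with $X_s=Y_1\cup Y_2$, where $Y_1$ is the zero section (degree $1$) and $Y_2$ is a degree $2$ cover. Any Higgs bundle with datum $s$ then has $\theta$-invariant subsheaves $\mathscr E_1=\ker(\theta|_{Y_1})$ and a quotient supported on $Y_2$. Here $\mathscr E_1$ is a rank one sheaf with zero Higgs field, and the quotient is $\pi_{2*}$ of a rank one sheaf on $Y_2$. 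The gluing of these two pieces along $Y_1\cap Y_2$ is controlled by the local MCM condition at the intersection points. We then compute $\mu_H$ of $\mathscr E_1$ and of the rank two quotient. The degrees are pinned down by the local gluing data: at each point where $Y_1$ and $Y_2$ meet with higher tangency, the local MCM modules have constrained length invariants, which force a fixed lower bound on $\deg\mathscr E_1$ relative to $\deg\mathscr E$.

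The inequality we aim for is $\mu_H(\mathscr E_1)>\mu_H(\mathscr E)$ for every admissible gluing. We want this forced by the geometry, for instance by $Y_2$ being ramified along a curve $B$ with $B\cdot H$ large compared to the contribution $\pi_{2*}\mathscr O_{Y_2}$ makes to the slope. This should follow from Grothendieck--Riemann--Roch, $c_1(\pi_{2*}\mathscr L_2)=\pi_{2*}c_1(\mathscr L_2)-\tfrac12 B$. To make the obstruction local and insensitive to the choice of line bundles, we will choose the singularity of $X_s$ at the intersection to be one where, by the classification of rank one MCM modules over the relevant curve-type singularity, every such module forces $\mathscr E_1\hookrightarrow\mathscr E$ to have a large enough colength at those points. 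Summing over many such points, which we can produce via the Kawamata covering trick on a suitable surface, should make destabilization unavoidable.

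The main obstacle is this last step. We need every gluing to produce a destabilizing subsheaf, uniformly over all choices of line bundles on $Y_1$ and $Y_2$, because the degrees of the line bundles can be twisted freely. So the bound must compare a subsheaf against the \emph{same} Higgs bundle in a way independent of those twists. A naive slope comparison therefore fails, and we would first try to use both possible destabilizers, $\mathscr E_1$ and the kernel of $\mathscr E\to\mathscr E|_{Y_1}$, and show that their slope inequalities cannot hold simultaneously with semistability once the local colength contributions exceed a threshold.
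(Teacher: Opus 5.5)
There is a genuine gap. You acknowledge that the decisive step is missing, and the mechanism you propose for it works against you. For $X_s=Y_1\cup Y_2$ with $Y_2$ integral, the only proper nonzero saturated Higgs subsheaves of $\mathscr E$ are $\mathscr E_1$ (sections supported on $Y_1$) and $\mathscr E_2$ (sections supported on $Y_2$). The inclusion $\mathscr E_1\oplus\mathscr E_2\subset\mathscr E$ has torsion cokernel living over the image of $Y_1\cap Y_2$. Write $d_i=\deg_H\mathscr E_i$ and let $c=\deg_H\mathscr E-d_1-d_2\geq 0$ be the gluing contribution. Then $\mu_H$-semistability is equivalent to $-2c\leq 2d_1-d_2\leq c$. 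So, for fixed $d_1,d_2$, any colength you accumulate at intersection points lowers both $\mu_H(\mathscr E_1)-\mu_H(\mathscr E)$ and $\mu_H(\mathscr E_2)-\mu_H(\mathscr E)$, and widens the semistable window. ``Summing over many such points'' therefore makes semistable Higgs bundles easier to find, not harder. Your choice of the zero section for $Y_1$ forces such intersections, since $\eta^2=a$ in $\Tot(\mathscr A)$ meets the zero section over the branch curve. Moreover, the unglued sheaves pushed forward from the normalization of $Y_1\cup Y_2$ still lie in the fibre, so the gluing buys you nothing.

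The missing idea is to go to the opposite extreme, $c=0$. Take the degree one component to be the graph $\Gamma_\vartheta$ of a nowhere vanishing $1$-form spanning a trivial summand in a splitting $\Omega_X^1\cong\OO_X\vartheta\oplus\mathscr A$, so that it is disjoint from $Y\subset\Tot(\mathscr A)$. Then every Higgs bundle in the fibre is $(\mathscr L_1,\vartheta)\oplus(\pi_*\mathscr L_2,m_\eta)$, and semistability forces the exact equality $\deg_H\pi_*\mathscr L_2=2\deg_H\mathscr L_1$. Your own formula $\deg_H\pi_*\mathscr L_2=c_1(\mathscr L_2)\cdot\pi^*H-c_1(\mathscr A)\cdot H$ then gives a parity contradiction, provided $c_1(\mathscr A)\cdot H$ is odd and $c_1(\mathscr L)\cdot\pi^*H$ is even for every $\mathscr L\in\Pic(Y)$.

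The real work in the paper is building such $(X,H)$:
\begin{itemize}
\item $X=(E\times C)/G$ with $G\cong(\mathbb Z/2\mathbb Z)^2$ acting on $E$ through $E[2]$ and on a genus two curve $C$ over $\mathbb P^1$. This gives $\mathscr A=\omega_X$ and $2K_X\sim F$.
\item A bisection coming from period $=$ index $=2$ of the generic fibre (via Tsen) gives an ample $H$ with $K_X\cdot H=1$.
\item Period--index two for $Y=(E\times C')/G$, together with $\operatorname{Hom}(J(C'),E)=0$, gives $\operatorname{Num}(Y)=\mathbb Z F_Y\oplus\mathbb Z\pi^*H$, hence the required evenness.
\end{itemize}
Neither local MCM classifications nor the Kawamata covering trick plays any role.
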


\subsection{Strategy and organization}

We now describe the strategy of the proofs. Our starting point is the spectral correspondence \cite[Proposition~6.3]{ChenNgo2020} (cf.~\cite{BeauvilleNarasimhanRamanan1989})
and its refinement in Proposition~\ref{prop:general-higgs-factors-through-Z}; see also \cite[Theorem~1.2]{HeLiu2024}. For a generically regular spectral
datum $s\in S_X^r$, the refined correspondence describes the fibre $\mathcal H_X^r(s)$ in terms of rank one maximal Cohen--Macaulay sheaves on suitable finite small modifications of the spectral variety $X_s$. Thus the absence of rank one maximal Cohen--Macaulay modules over local rings of finite small modifications of $X_s$ provides local obstructions to the non-emptiness of $\mathcal H_X^r(s)$.

Ma constructed in \cite{Ma2019} normal affine varieties $Y$ with a unique singular point $o_Y$ such that neither the local ring $\mathscr O_{Y,o_Y}$ nor its completion $\widehat{\mathscr O}_{Y,o_Y}$ admits a rank one maximal
Cohen--Macaulay module. Using a Noether normalization and replacing $Y$ by $W=Y\times\mathbb A^m$ if necessary, we realize the pointed germ of $W$ by an exact Lagrangian embedding in codimension one into the cotangent bundle $T^*\mathbb{A}^{N-1}$ of an affine space $\mathbb{A}^{N-1}$, where $N=\dim Y+m+1$. Its image determines a non-zero generically regular datum
$s\in S_{\mathbb{A}^{N-1}}^r$, where $r$ is the degree of the projection $W\to \mathbb{A}^{N-1}$. The refined spectral correspondence then gives
$\mathcal H_{\mathbb{A}^{N-1}}^r(s)=\emptyset$.

To globalize these local models, it suffices to construct a finite small modification of a spectral variety, which contains one of them as an analytic germ. After adjoining an affine elliptic curve factor $E$, we obtain a finite cover
$q_M\colon M\coloneqq W\times E\to\mathbb A^N$ of degree $2r$ and a rational section $\sigma_M$ of $q_M^*\Omega_{\mathbb A^N}^1$ such that
$(dq_M)(\sigma_M)$ is a regular $1$-form on $M$ extending to the smooth loci of normal projective birational models of $M$. Moreover, the local obstructions on $W$ also yield germs of $M$ as local obstructions to the existence of rank one maximal Cohen--Macaulay modules. We then compactify $\mathbb A^N$ to $\mathbb P^N$ naturally and the Kawamata's covering trick is applied to get a suitable smooth finite cover $X$ of a log resolution $\widetilde{\mathbb{P}}$ of $\mathbb{P}^N$, which is \'etale near a chosen point. The finite cover $q_M\colon M\rightarrow \mathbb{A}^N$ yields a natural finite cover $\gamma\colon S\rightarrow  X$ such that the induced pullback of $\sigma_M$ on $S$ is a regular section of $\gamma^*\Omega_X^1$ and defines a generically regular spectral datum $s\in S_X^{r'}$ for $r'=\deg\gamma\leq 2r$. In particular, since the cover $X\rightarrow \widetilde{\mathbb{P}}$ is \'etale near the chosen point, the variety $S$ contains the prescribed analytic germ in $M$. The spectral correspondence, together with the local obstructions on $S$, gives the desired projective counterexamples.

The arguments for the $\operatorname{SL}_2$ and polystable cases are analogous. We construct finite covers of projective manifolds, which can be realized as a spectral cover (or its small finite modifications). Then we show that the corresponding extremal conditions cannot be satisfied under the refined spectral correspondence.

The paper is organized as follows. Section~\ref{sec.Prelimi} collects basic definitions and facts about spectral bases, spectral morphisms, and the refined spectral correspondence. Section~\ref{ss.Local-Models} constructs the local models, following \cite{Ma2019}. Section~\ref{sec.global-counterexamples} gives the compactification construction and proves Theorem~\ref{thm:Chen-Ngo-false}. Section~\ref{sec:sl2-counterexample} treats the $\operatorname{SL}_2$ case and proves Theorem~\ref{thm.SL_2}. Section~\ref{sec:polystable} treats the rank $3$ semistable case and proves
Theorem~\ref{thm:polystable-counterexample}. Finally, Appendix~\ref{sec: dim 2} proves an equivariant version of the classical Noether--Lefschetz theorem used in Section~\ref{sec:sl2-counterexample}.

\subsection*{Conventions}
Let $X$ be a variety and let $\mathscr{E}$ be
a vector bundle on $X$.  The projectivization $\mathbb P(\mathscr{E})$ of $\mathscr{E}$ parametrizes lines in the fibers
of $\mathscr{E}\rightarrow X$. On the other hand, the total space of $\mathscr{E}$ is defined as:
\[
\Tot(\mathscr{E})
\coloneqq
\Spec_X\!\left(\bigoplus_{i\geq0}\Sym^i\mathscr{E}^\vee\right),
\]
where $\mathscr{E}^{\vee}$ is the dual bundle of $\mathscr{E}$. In particular, if $X$ is smooth, the total space of the cotangent bundle $\Omega_X^1$ of $X$ is denoted by $T^*X$ with $\pi\colon T^*X\rightarrow X$ the natural projection.

\subsection*{Acknowledgments}
S.~Zhang is grateful to Matthew Huynh for useful discussions.
	  J.~Liu is supported by the CAS Project for Young Scientists in Basic Research (No. YSBR-033), the Youth Innovation Promotion Association CAS and the NSFC grant (No. 12288201). S.~He is partially supported by NSFC grant No.12288201 and No.2023YFA1010500.
      S.~Zhang is supported by an AMS-Simons travel grant.

\subsection*{Declaration of AI}
The authors used GPT-5.6 Sol in developing ideas and exploring
proof strategies for this work. The manuscript was written
entirely by the authors, who verified the mathematical arguments
and take full responsibility for its content.

\section{Preliminaries}\label{sec:hitchin-local}
\label{sec.Prelimi}

In this section, we collect basic definitions and facts concerning Higgs bundles, especially the spectral base introduced by Chen and Ng\^o in \cite{ChenNgo2020}.

\subsection{Higgs bundles and the Hitchin morphism}

Let $X$ be a smooth variety of dimension $n$.  A rank-$r$ \emph{Higgs bundle} on $X$ is a pair
$(\mathscr{E},\theta)$, where $\mathscr{E}$ is a vector bundle of rank $r$ over $X$ and
\[
  \theta\colon\mathscr{E}\longrightarrow
  \mathscr{E}\otimes\Omega_X^1
\]
is an $\mathscr{O}_X$-linear morphism satisfying the integrability condition $\theta\wedge\theta=0$.  Equivalently,
if $x_1,\ldots,x_n$ are local coordinates on $X$ and
$\theta=\sum_{i=1}^n\theta_i\,dx_i$, then
$[\theta_i,\theta_j]=0$ for all $i,j$.  Thus a Higgs field is locally a tuple
of pairwise commuting endomorphisms of $\mathscr{E}$. Let $\mathcal H_X^r$ denote the set of isomorphism classes of rank-$r$ Higgs
bundles on $X$.  The rank-$r$ \emph{Hitchin base} of $X$ is
\[
  A_X^r\coloneqq
  \bigoplus_{i=1}^r H^0\!\left(X,\Sym^i\Omega_X^1\right).
\]
The characteristic polynomial of a Higgs field $\theta$ has the form
\[
  \det(T-\theta)
  =T^r-s_1(\theta)T^{r-1}+s_2(\theta)T^{r-2}
   -\cdots+(-1)^rs_r(\theta),
\]
where
$s_i(\theta)\in H^0(X,\Sym^i\Omega_X^1)$.  This defines the \emph{Hitchin morphism}
\[
  h_X^r\colon\mathcal H_X^r\longrightarrow A_X^r,
  \qquad
  (\mathscr{E},\theta)\longmapsto
  \bigl(s_1(\theta),\ldots,s_r(\theta)\bigr).
\]
When $X$ is a smooth projective curve, the integrability condition is automatic and it is known that the Hitchin morphism $h_X^r$ is surjective. However, in higher
dimensions the local endomorphisms must commute, and this imposes additional
algebraic relations on the characteristic coefficients.  Thus one could not
expect $h_X^r$ to be surjective onto $A_X^r$.

\subsection{The spectral base and the spectral morphism}
\label{ss.Spectralbase}

Chen and Ng\^o define a closed subscheme $S_X^r\subset A_X^r$, called the
\emph{spectral base}. In the situation of usual Higgs bundles (i.e. $\operatorname{GL}_r$-Higgs bundles), it has a concrete description in terms of relative Chow varieties.  Let $V$ be a vector space. Recall that the map
\[
  \begin{aligned}
  \Chow^r(V)&\longrightarrow
  V\times\Sym^2V\times\cdots\times\Sym^rV,\\
  [v_1,\ldots,v_r]&\longmapsto
  \bigl(\sigma_1(v_1,\ldots,v_r),\ldots,
        \sigma_r(v_1,\ldots,v_r)\bigr)
  \end{aligned}
\]
is a closed embedding, where $\sigma_i$ is the $i$th elementary symmetric
function.  Applied fiberwise to $T^*X\rightarrow X$, this gives a closed embedding
\[
  \Chow^r\bigl(T^*X/X\bigr)
  \hookrightarrow
  T^*X\times_X\dots\times_X \Tot \bigl(\Sym^r\Omega_X^1\bigr).
\]
The spectral base $S_X^r$ then is the space of sections of
\[
  \Chow^r\bigl(T^*X/X\bigr)\longrightarrow X.
\]
Equivalently, $s=(s_1,\ldots,s_r)\in A_X^r$ belongs to $S_X^r$ if and only if,
for every $x\in X$, there exist covectors
$\alpha_1,\ldots,\alpha_r\in\Omega_{X,x}^1$ (not necessarily pairwise distinct) such that
\[
  s_i(x)=\sigma_i(\alpha_1,\ldots,\alpha_r)
  \qquad(1\leq i\leq r).
\]
An element of $S_X^r$ is called a \emph{spectral datum} and it is called
\emph{generically regular} if its zero-cycle over the generic point of $X$
consists of $r$ distinct covectors.

The integrability condition $\theta\wedge \theta=0$ implies that $h_X^r$ factors through $S_X^r$ \cite[Proposition 2.8]{HeLiuMok2024}: Indeed, at $x\in X$ write
$\theta(x)=\sum_{i=1}^n \theta_i(x)\,dx_i$.  Since the matrices $\theta_i(x)$ commute, they can
be simultaneously upper triangularized.  Their diagonal entries give an
unordered $r$-tuple of covectors, whose elementary symmetric functions are
precisely the characteristic coefficients of $\theta$.  We therefore have a
factorization
\[
  \begin{tikzcd}[column sep=large]
  \mathcal H_X^r \arrow[r,"{\sd_X^r}"] \arrow[dr,"{h_X^r}"']
    & S_X^r \arrow[d,hook,"\iota_X^r"]\\
    & A_X^r .
  \end{tikzcd}
\]
The map $\sd_X^r$ is called the \emph{spectral morphism}.  For
$s\in S_X^r$, we write $\mathcal H_X^r(s)$ for its fiber over $s$.

\begin{lemma}[{\cite[Lemma 2.6]{HeLiuMok2024}}]
\label{lem:dense-open-spectral-base}
Let $X$ be a smooth variety and let $s\in A_X^r$.  Suppose that there is a
dense open subset $X^\circ\subset X$ such that $s|_{X^{\circ}}\in S_{X^{\circ}}^r$.  Then $s\in S_X^r$.  

Moreover, if we assume furthermore that $s|_{X^{\circ}}$ is generically regular, then $s$ itself is generically regular.
\end{lemma}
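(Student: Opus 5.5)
The plan is to use that $\Chow^r(T^*X/X)\to X$ is a closed subscheme of the total space of the vector bundle $\mathscr V\coloneqq\bigoplus_{i=1}^r\Sym^i\Omega_X^1$. The datum $s\in A_X^r$ is a section $X\to\Tot(\mathscr V)$, and we must show that it factors through this closed subscheme.

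First I check that $\Tot(\mathscr V)$ is separated over $X$ and that $X$ is reduced, since $X$ is smooth. Then $s^{-1}(\Chow^r(T^*X/X))$ is a closed subscheme of $X$. By hypothesis it contains the dense open $X^\circ$: the restriction $s|_{X^\circ}$ factors through the Chow variety, and membership in $S^r_{X^\circ}$ is exactly this factorization. A closed subscheme of a reduced scheme that contains a dense open subset is all of $X$. Hence $s$ factors through $\Chow^r(T^*X/X)$, so $s\in S_X^r$.

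There is also a pointwise argument, which avoids scheme-theoretic issues. For every $x\in X$, take a sequence or curve in $X^\circ$ specializing to $x$. The covectors $\alpha_i$ are roots of the characteristic polynomial, so they are bounded. A limit of the unordered tuples therefore exists, because $\Chow^r(V)\cong V^r/S_r$ is closed in the symmetric-function coordinates, and by continuity the limit has the prescribed $s_i(x)$.

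Generic regularity holds because the generic point of $X$ lies in $X^\circ$. The zero-cycle over the generic point of $X$ is therefore the same as that of $s|_{X^\circ}$, namely $r$ distinct covectors.

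The only point needing care is the identification of $S_X^r$ with sections factoring through the closed embedding $\Chow^r(T^*X/X)\hookrightarrow \Tot(\mathscr V)$. That is, "space of sections" should mean morphisms $X\to\Chow^r$ over $X$, and these are equivalent to sections of $\mathscr V$ landing in the closed subscheme. This equivalence holds because the map is a closed embedding, hence a monomorphism, so I expect no real obstacle.
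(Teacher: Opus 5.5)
Your argument is correct. Since $\Chow^r(T^*X/X)\subset\Tot(\mathscr V)$ is a closed subscheme, $s^{-1}(\Chow^r(T^*X/X))$ is a closed subscheme of the reduced scheme $X$ containing the dense open $X^\circ$, hence equal to $X$; generic regularity is then read off at the generic point, which lies in $X^\circ$. The paper gives no proof of its own and only cites \cite[Lemma 2.6]{HeLiuMok2024}, and your closedness-plus-density argument is the standard one. Neither the separatedness remark nor the analytic limiting argument is needed.
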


\subsection{Spectral varieties}\label{sec:spectral-varieties}

Let $s=(s_1,\ldots,s_r)\in S_X^r$, and let
$\lambda_{\mathrm{taut}}\in H^0(T^*X,\pi^*\Omega_X^1)$ be the tautological
$1$-form.  The \emph{spectral variety} associated with $s$ is the closed
subscheme $X_s\subset T^*X$ cut out by
\[
  \lambda_{\mathrm{taut}}^r
  -\pi^*s_1\lambda_{\mathrm{taut}}^{r-1}
  +\pi^*s_2\lambda_{\mathrm{taut}}^{r-2}
  -\cdots+(-1)^r\pi^*s_r=0,
\]
viewed as an element of $H^0(T^*X,\pi^*\Sym^r\Omega_X^1)$.

For an affine description, suppose that $X=\Spec A$ and that the cotangent
bundle is free, with basis $dx_1,\ldots,dx_n$.  Then
\[
  T^*X=\Spec A[\xi_1,\ldots,\xi_n],
  \qquad
  \lambda_{\mathrm{taut}}=\sum_{i=1}^n\xi_i\,dx_i.
\]
Write
\[
  \lambda_{\mathrm{taut}}^r-s_1\lambda_{\mathrm{taut}}^{r-1}
  +\cdots+(-1)^rs_r
  =\sum_{|\alpha|=r}F_\alpha\,dx^\alpha.
\]
Then we have
\[
  X_s=\Spec \biggl(A[\xi_1,\ldots,\xi_n]\bigg/
                    \langle F_\alpha\mid |\alpha|=r\rangle\biggr).
\]
On the open set where $s$ is represented by $r$ distinct $1$-forms, the spectral variety $X_s$ is the disjoint union of their graphs.  In particular,
if $s$ is generically regular, then $X_s\to X$ is finite of degree
$r$.  However, the scheme structure of $X_s$ may be complicated over the ramification and non-flat loci of the finite cover $X_s\rightarrow X$.

\subsection{MCM sheaves and refined spectral correspondence}

For a variety $Y$, we denote by $\mathcal{M}_{\cm}(Y)$ the set of isomorphism classes of rank one maximal Cohen--Macaulay (MCM for short) sheaves on $Y$. We refer the reader to \cite[IV, B)]{Serre1965} for the basic facts on Cohen--Macaulay modules. 

Suppose in addition that $Y$ is normal, then we can identify the divisor class group $\Cl(Y)$ of $Y$ with the set of isomorphism classes of divisorial sheaves on $Y$; that is, $\mathscr{O}_Y(D)$ for a Weil divisor $D$ on $Y$. Then, in particular, there exists a natural injection $\mathcal{M}_{\cm}(Y)\rightarrow \Cl(Y)$. Indeed, for $\mathscr{L}\in \mathcal{M}_{\cm}(Y)$, the restriction $\mathscr{L}|_{Y_{\operatorname{reg}}}$ is invertible by \cite[IV, D), Corollaire 2]{Serre1965}, thus there exists a divisor $D$, unique up to linear equivalence, on $Y$ such that $\mathscr{O}_{Y_{\operatorname{reg}}}(D)\cong \mathscr{L}|_{Y_{\operatorname{reg}}}$, which then implies $\mathscr{O}_Y(D)\cong \mathscr{L}$ because both $\mathscr{O}_Y(D)$ and $\mathscr{L}$ are reflexive and $\operatorname{codim}(Y\setminus Y_{\operatorname{reg}})\geq 2$.

Let $X$ be a projective manifold and let $s\in S_X^r$ be a generically regular spectral datum. The spectral correspondence canonically identifies $\mathcal H_X^r(s)$ with $\mathcal{M}_{\cm}(X_s)$ \cite[Proposition 6.3]{ChenNgo2020}. We need the following variant.

\begin{proposition}\label{prop:general-higgs-factors-through-Z}
Let $X$ be a smooth variety of dimension $n$, and let
$p\colon Y\to X$ be a finite surjective morphism of degree $r$, with $Y$ a (possibly reducible) equi-dimensional normal variety.  Suppose
that there is a morphism $f\colon Y\to T^*X$ such that $\pi\circ f=p$ and such
that $f$ is an embedding on an open subset of $Y$ whose complement in $Y$ has codimension at least two.  Then $f$ determines a generically regular spectral datum $s\in S_X^r$ and there exists a natural bijection $\mathcal{M}_{\cm}(Y)\rightarrow \mathcal{H}_X^r(s)$.

Moreover, $s=0$ if and only if the image of $f$ is contained in the zero section.
\end{proposition}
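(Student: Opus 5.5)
The plan has three parts: construct $s$ from $f$, relate $Y$ to the spectral variety $X_s$, and then transport rank one MCM sheaves along $p$.

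\emph{Constructing $s$.} Let $X^\circ\subset X$ be a dense open subset over which $p$ is étale and $f$ is an embedding on $p^{-1}(X^\circ)$. Shrinking $X^\circ$, we may assume $p^{-1}(X^\circ)\to X^\circ$ is étale of degree $r$. Locally in the étale topology, $p^{-1}(X^\circ)$ is then a disjoint union of $r$ sections. On each sheet, $f$ is given by a $1$-form $\alpha_j$, and these forms are pairwise distinct because $f$ is injective there.

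Define $s_i$ on $X^\circ$ as $\sigma_i(\alpha_1,\dots,\alpha_r)$. Intrinsically, $s_i=\operatorname{tr}$-type norms: $s_i$ is the $i$-th coefficient of the characteristic polynomial of multiplication by $f^*\lambda_{\mathrm{taut}}$ on $p_*\mathscr O_Y$. Since $p$ is finite and $Y$ is normal, $p_*\mathscr O_Y$ is a reflexive sheaf on the smooth $X$. It is locally free in codimension one, hence locally free off a closed set of codimension at least $2$. The coefficients are therefore regular off codimension two, and by Hartogs they extend to sections of $\Sym^i\Omega_X^1$. Lemma~\ref{lem:dense-open-spectral-base} then gives $s\in S_X^r$, generically regular.

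For the last claim: $s=0$ forces all $\alpha_j=0$ on $X^\circ$, so $f$ lands in the zero section because $Y$ is reduced and $f(p^{-1}X^\circ)$ is dense in $f(Y)$. The converse is clear.

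\emph{Relating $Y$ to $X_s$.} By Cayley–Hamilton, $f^*\lambda_{\mathrm{taut}}$ satisfies the characteristic polynomial relation. This holds on the locus where $p_*\mathscr O_Y$ is free, hence everywhere, since $Y$ is reduced and that locus is dense. Consequently $f$ factors through $X_s\subset T^*X$, giving a finite birational morphism $g\colon Y\to X_s$. It is an isomorphism over $X^\circ$, and more generally on the open set $U\subset Y$ where $f$ is an embedding.

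\emph{The bijection.} Given $\mathscr L\in\mathcal M_{\cm}(Y)$, the sheaf $p_*\mathscr L$ is MCM on the regular scheme $X$, hence locally free of rank $r$ by Auslander–Buchsbaum. The $\mathscr O_Y$-module structure makes multiplication by $f^*\lambda_{\mathrm{taut}}$ into a Higgs field $\theta\colon p_*\mathscr L\to p_*\mathscr L\otimes\Omega^1_X$. It is integrable because the components act through the commutative ring $p_*\mathscr O_Y$. Its characteristic polynomial agrees with $s$ on $X^\circ$, hence everywhere.

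For the inverse, start with $(\mathscr E,\theta)\in\mathcal H_X^r(s)$. By the Chen–Ngô correspondence \cite[Proposition 6.3]{ChenNgo2020}, this is $g'_*\mathscr M$ for a rank one MCM sheaf $\mathscr M$ on $X_s$. Equivalently, $\mathscr E$ is a module over the $\mathscr O_X$-algebra generated by the $\theta_i$. Set $\mathscr L:=(g^*\mathscr M)^{\vee\vee}$. Equivalently, $\mathscr L$ is the reflexive extension of $\mathscr M|_{g(U)}$ pulled back to $U$, and this makes sense because $\operatorname{codim}(Y\setminus U)\ge2$.

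\emph{Main obstacle.} The key point is showing that the $p_*\mathscr O_{X_s}$-module structure on $\mathscr E$ extends to a $p_*\mathscr O_Y$-module structure. I would argue this via reflexivity. $\mathscr E$ is locally free, and $\mathscr H om_{\mathscr O_X}(\mathscr E,\mathscr E)$ is reflexive. The action of $p_*\mathscr O_Y$ is defined on $p(U)$ away from codimension two in $X$ (because $p$ is finite, $p(Y\setminus U)$ has codimension $\ge2$), so it extends uniquely by Hartogs. The extended action is multiplicative and commutative by density. This makes $\mathscr E=p_*\mathscr L$ with $\mathscr L$ a rank one $\mathscr O_Y$-module. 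Since $p$ is finite, $\mathscr L$ is MCM exactly because $p_*\mathscr L$ is locally free (depth is computed on $X$).

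Both constructions are inverse to each other by uniqueness of reflexive extensions, and they respect isomorphisms. This yields the natural bijection $\mathcal M_{\cm}(Y)\to\mathcal H_X^r(s)$.
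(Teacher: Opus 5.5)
Your architecture matches the paper's: build $s$ by pushing forward multiplication by $f^*\lambda_{\mathrm{taut}}$ and extending by Hartogs, push forward rank one MCM sheaves to get Higgs bundles, and for the inverse put an $\mathscr{O}_Y$-module structure on $\mathscr{E}$ away from codimension two, extend it by reflexivity, and check the MCM property on $X$. However, one claim you rely on is false. The morphism $g\colon Y\to X_s$ need not be an isomorphism on the open set $U$ where $f$ is an embedding, because $X_s$ can carry nilpotents there.

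For example, take $X=\mathbb{A}^2$ and $Y=\{\xi_1^2=x_2,\ \xi_2=0\}\subset T^*\mathbb{A}^2$. Then $f$ is a closed embedding, so $U=Y$. Here $s_1=0$ and $s_2=-x_2\,dx_1^2$, and $X_s$ is cut out by $\xi_1^2=x_2$, $\xi_1\xi_2=0$, $\xi_2^2=0$. The class of $\xi_2$ is a nonzero nilpotent supported over $\{x_2=0\}$. This failure of $X_s$ to be Cohen--Macaulay is exactly why one works with $Y$ rather than $X_s$.

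Consequently your sentence ``the action of $p_*\mathscr{O}_Y$ is defined on $p(U)$ away from codimension two'' is unjustified as written. Cayley--Hamilton, or the Chen--Ng\^o correspondence, only gives an action of $\mathscr{O}_{X_s}$, and $\mathscr{O}_{X_s}\to g_*\mathscr{O}_Y$ is not an isomorphism over $p(U)$. Your ``Equivalently'' description of $(g^*\mathscr{M})^{\vee\vee}$ rests on the same false identification.

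The missing ingredient is a torsion-freeness argument. It is also what lies implicitly behind the paper's assertion that the $\mathscr{O}_{T^*X^\circ}$-module structure on $\mathscr{E}^\circ$ factors through $\mathscr{O}_{Y^\circ}$.

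Set $X'\coloneqq X\setminus p(Y\setminus U)$. The map $f$ is finite, since $p$ is finite and $\pi$ is separated, and over $X'$ it is an immersion, hence a closed immersion. So $p_*\mathscr{O}_Y|_{X'}$ is a quotient of $\pi_*\mathscr{O}_{T^*X'}$.

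A local section $h$ of the ideal of $f(Y)$ acts on $\mathscr{E}$, through $\theta$, by an $\mathscr{O}_X$-linear endomorphism. Over the dense open set where $s$ consists of $r$ pointwise distinct covectors, $X_s$ is the reduced disjoint union of their graphs and coincides with $f(Y)$. So this endomorphism vanishes there, and hence everywhere, because $\mathscr{E}$ is locally free and thus torsion-free.

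Therefore the action factors through $p_*\mathscr{O}_Y$ over $X'$. From there your Hartogs extension, using reflexivity of $p_*\mathscr{O}_Y$ and $\mathscr{E}nd(\mathscr{E})$, and the rest of your argument go through. The detour through Chen--Ng\^o and $(g^*\mathscr{M})^{\vee\vee}$ is unnecessary.
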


\begin{proof}
 Since $X$ is smooth, $Y$ is normal and $p$ is finite, there exists an open set $X^\circ\subset X$ such that $Y^\circ\coloneqq p^{-1}(X^\circ)$ is smooth, $f|_{Y^\circ}$ is a closed embedding, $p^\circ\colon Y^\circ\to X^\circ$ is finite and flat and the complements of $X^\circ$ and $Y^\circ$ have codimension at least two. Set $\lambda_Y=f^*\lambda_{\mathrm{taut}}\in H^0(Y,p^*\Omega_X^1)$. 

Firstly we show that $f$ determines a generically regular spectral datum $s\in S_X^r$.  Multiplication by $\lambda_Y$ gives an $\mathscr{O}_{Y^\circ}$-linear map
\begin{equation}
	\label{eq.times-lambda}
	\mathscr{O}_{Y^\circ}\longrightarrow  \mathscr{O}_{Y^\circ}\otimes(p^\circ)^*\Omega_{X^\circ}^1.
\end{equation}
Set $\mathscr{E}^{\circ}=p^{\circ}_*\mathscr{O}_{Y^{\circ}}$. Then $\mathscr{E}^{\circ}$ is locally free because $p^{\circ}$ is flat. In particular, after pushing forward \eqref{eq.times-lambda} and using the projection formula, we obtain a Higgs bundle
\[
\theta^{\circ}\colon \mathscr{E}^{\circ} \longrightarrow \mathscr{E}^{\circ}\otimes \Omega_{X^{\circ}}^1.
\]
Then the image $s^\circ=h_{X^{\circ}}^r(\mathscr{E}^{\circ},\theta^{\circ})\in S_{X^{\circ}}^r$ is generically regular. On the other hand, as $\operatorname{codim}(X\setminus X^{\circ})\geq 2$, by Hartogs' theorem, the spectral datum $s^{\circ}$ extends uniquely to a spectral datum $s\in S_X^r$. Moreover, we observe that $s=0$ if and only if $\lambda_{Y}|_{Y^{\circ}}=0$, and therefore if and only if $f(Y)$ is contained in the zero section of $T^*X$.

Next we construct the injection from $\mathcal{M}_{\cm}(Y)$ to $\mathcal{H}_X^r$. Let $\mathscr{L}\in \mathcal{M}_{\cm}(Y)$. Again multiplication by $\lambda_Y$ yields an $\mathscr{O}_Y$-linear map
\[
  \mathscr{L}\longrightarrow \mathscr{L}\otimes p^*\Omega_X^1.
\]
Set $\mathscr{E}=p_*\mathscr{L}$. Since $p$ is finite and $X$ is smooth, the sheaf $\mathscr{E}$ is Cohen--Macaulay and hence is locally free by \cite[IV, B), Proposition 11 and D), Corollaire 2]{Serre1965}. Then pushing forward and using the projection formula give a Higgs bundle
\[
  \theta\colon\mathscr{E}\coloneqq p_*\mathscr{L}
  \longrightarrow p_*(\mathscr{L}\otimes p^*\Omega_X^1)=\mathscr E\otimes\Omega_X^1.
\]
Note that the spectral datum $h_X^r(\mathscr{E},\theta)$ depends only on $\lambda_{Y}$, so it coincides with the spectral datum $s$ over $X^{\circ}$. Hence $h_X^r(\mathscr{E},\theta)=s$ and $(\mathscr{E},\theta)\in \mathcal{H}_X^r(s)$. 

Finally we construct the inverse map from $\mathcal{H}_X^r(s)$ to $\mathcal{M}_{\cm}(Y)$. Let $(\mathscr{E},\theta)\in\mathcal H_X^r(s)$. Then the commuting local endomorphisms of $\mathscr{E}$ defined by $\theta$ induce a natural $\mathscr{O}_{T^*X^{\circ}}$-module structure on $\mathscr{E}^{\circ}\coloneqq \mathscr{E}|_{X^{\circ}}$. Moreover, the Cayley--Hamilton theorem then implies that the $\mathscr{O}_{T^*X^{\circ}}$-module structure on $\mathscr{E}^{\circ}$ actually factors through the quotient $\mathscr{O}_{T^*X^{\circ}}\rightarrow \mathscr{O}_{Y^{\circ}}$. However, since $Y$ is normal, $\mathscr{E}$ is locally free, and the complements of $X^{\circ}$ and $Y^{\circ}$ have codimension at least two, there exist natural isomorphisms
\[
\bigl(\iota_{Y^{\circ}}\bigr)_*\mathscr{O}_{Y^{\circ}}\cong \mathscr{O}_{Y}\quad \text{and}\quad \bigl(\iota_{X^{\circ}}\bigr)_*\mathscr{E}^{\circ}\cong \mathscr{E},
\]
where $\iota_{Y^{\circ}}\colon Y^{\circ}\rightarrow Y$ and $\iota_{X^{\circ}}\colon X^{\circ}\rightarrow X$ are the natural inclusions. This implies that the $\mathscr{O}_{Y^{\circ}}$-module structure on $\mathscr{E}^{\circ}$ extends to a unique $\mathscr{O}_Y$-module structure on $\mathscr{E}$. Moreover, since $\mathscr{E}$ is Cohen--Macaulay and $p$ is finite, it follows that $\mathscr{E}$ is also Cohen--Macaulay as an $\mathscr{O}_Y$-module and hence it defines a rank one torsion free sheaf on $Y$, denoted by $\mathscr{L}$. On the other hand, since $\mathscr{E}$ is locally free and $X$ is smooth, the sheaf $\mathscr{E}$ is Cohen--Macaulay as an $\mathscr{O}_X$-module. In particular, since $p$ is finite, the sheaf $\mathscr{L}$ is Cohen--Macaulay as an $\mathscr{O}_Y$-module by \cite[IV, B), Proposition 11]{Serre1965}, and consequently $\mathscr{L}\in \mathcal{M}_{\cm}(Y)$. Now one can easily check that this is the inverse map of $\mathcal{M}_{\cm}(Y)\to \mathcal{H}_X^r$ given in the previous step.
\end{proof}

\subsection{Criterion for regularity}

To compactify our local counterexamples to projective counterexamples, we need the following technical lemma.

\begin{lemma}
	\label{lem:Ramified-Index}
	Let $\gamma\colon Y_0\longrightarrow X_0$ be a finite morphism from a normal complex variety $Y_0$ to a smooth variety $X_0$, and let $\tau\colon X_1\longrightarrow X_0$	be a finite morphism, where $X_1$ is smooth. Let $Y_1$ be the normalization of an irreducible component of
	$X_1\times_{X_0}Y_0$, and consider the commutative diagram
	\[
	\begin{tikzcd}[column sep=large,row sep=large]
		\mathfrak D_1\subset Y_1
		\arrow[r,"{\tau'}"]
		\arrow[d,"{\gamma'}" left]
		&
		Y_0\supset \mathfrak D_0
		\arrow[d,"\gamma"]
		\\
		\mathfrak B_1\subset X_1
		\arrow[r,"\tau"]
		&
		X_0\supset \mathfrak B_0.
	\end{tikzcd}
	\]
	Let $\mathfrak B_0$ be a prime divisor on $X_0$, let $\mathfrak B_1$ be a prime divisor on $X_1$ lying over	$\mathfrak B_0$ with ramification index $m$, and let
	$\mathfrak D_0$ be a prime divisor on $Y_0$ lying over $\mathfrak B_0$ with ramification index $e$. Assume that $e\mid m$. Then, for every prime divisor $\mathfrak D_1$ lying over both	$\mathfrak B_1$ and $\mathfrak D_0$, and every rational section $\sigma$ of $\gamma^*\Omega_{X_0}^1$ such that $(d\gamma)(\sigma)$ is regular at the generic point of $\mathfrak D_0$, the rational section $\Psi(\tau'^*\sigma) $	is regular at the generic point of $\mathfrak D_1$, where
	\[
	\Psi\colon
	\tau'^*\gamma^*\Omega_{X_0}^1
	\xlongrightarrow{\ \cong\ }
	\gamma'^*\tau^*\Omega_{X_0}^1
	\xlongrightarrow{\ \gamma'^*(d\tau)\ }
	\gamma'^*\Omega_{X_1}^1.
	\]
\end{lemma}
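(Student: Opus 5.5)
The plan is to reduce everything to a computation in local coordinates at the generic points, where all the varieties are smooth and the maps are monomial in a uniformizer. Since regularity at the generic point of $\mathfrak D_1$ is a statement about a DVR, we may shrink the varieties around the generic points of $\mathfrak B_0,\mathfrak B_1,\mathfrak D_0,\mathfrak D_1$ and pass to completions (or analytic germs at a general point). Concretely, at a general point of $\mathfrak B_0$ choose coordinates $x_1,\dots,x_n$ on $X_0$ with $\mathfrak B_0=\{x_1=0\}$. Near a general point of $\mathfrak D_0$ the normal variety $Y_0$ is smooth, and $\gamma$ has the form $x_1=u\,y_1^{e}$ with $u$ a unit, $x_i=y_i$ for $i\ge 2$, after modifying coordinates. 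In characteristic zero with tame ramification we can absorb the unit, so $x_1=y_1^e$. Similarly, near a general point of $\mathfrak B_1$ we have $\tau$ given by $x_1=t_1^{m}$ and $x_i=t_i$ for $i\geq 2$.

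Next we identify the local structure of $Y_1$ over $\mathfrak D_1$. Since $e\mid m$, the normalization of the fibre product $\{y_1^e=t_1^m\}$ splits into components $y_1=\zeta t_1^{m/e}$ with $\zeta^e=1$. So at the generic point of $\mathfrak D_1$ the variety $Y_1$ is locally isomorphic to $X_1$, with $\gamma'$ étale there, and $\tau'$ is given by $y_1=\zeta t_1^{m/e}$. This is the step I expect to need the most care: we have to justify the local normal forms, namely that a finite map between normal varieties at the generic point of a divisor looks like $z\mapsto z^e$ after completion, and that the normalization of the fibre product is as described. I would do this via the structure of finite tamely ramified extensions of complete DVRs $\mathbb C(\!(x_2,\dots)\!)[[x_1]]$ (Kummer theory / Abhyankar's lemma), which is where the hypothesis $e\mid m$ enters.

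Now write the rational section as $\sigma=\sum_i f_i\,\gamma^*dx_i$ with $f_i$ rational functions on $Y_0$. Then $(d\gamma)(\sigma)=f_1\,d(y_1^e)+\sum_{i\ge2}f_i\,dy_i=e f_1 y_1^{e-1}dy_1+\sum_{i\geq 2} f_i\,dy_i$. Regularity of this at the generic point of $\mathfrak D_0$ means $f_i$ is regular for $i\ge 2$ and $\ord_{\mathfrak D_0}(f_1)\ge 1-e$. Pulling back along $\tau'$ gives $\Psi(\tau'^*\sigma)=(\tau'^*f_1)\,\tau^*dx_1+\sum_{i\geq 2}(\tau'^*f_i)\,dt_i$, and $\tau^*dx_1=m t_1^{m-1}dt_1$. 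Since $\ord_{\mathfrak D_1}(\tau'^*f_1)=\tfrac{m}{e}\ord_{\mathfrak D_0}(f_1)\ge \tfrac me(1-e)=\tfrac me-m$, the coefficient of $dt_1$ has order at least $\tfrac me-m+m-1=\tfrac me-1\ge 0$. The remaining coefficients are pullbacks of regular functions, so $\Psi(\tau'^*\sigma)$ is regular at the generic point of $\mathfrak D_1$, as a section of $\gamma'^*\Omega^1_{X_1}$, which near that point is just $\Omega^1_{X_1}$ pulled back by an étale map.

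The only place the orders are used is this last inequality. Apart from the normal-form step, I expect the argument to be a routine valuation count.
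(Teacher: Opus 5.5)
Your proposal is correct and is essentially the paper's proof. Both arguments come down to the same valuation count: the coefficient of $dt_1$ has order at least $\frac{m}{e}(1-e)+m-1=\frac{m}{e}-1\geq 0$. The paper does this algebraically at the generic points, keeping the units in $\tau^*t_0=bt_1^m$ and citing Abhyankar's lemma for $e(\mathfrak D_1/\mathfrak B_1)=1$ and $e(\mathfrak D_1/\mathfrak D_0)=m/e$; you instead pass to analytic normal forms at a general point so that the units disappear.

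One small correction: the step that absorbs the unit must use analytic germs at a general closed point. It does not work in the completion at the generic point, which is $\kappa(\mathfrak B_0)[[x_1]]$ (not the ring you wrote), because there the leading coefficient lies in $\kappa(\mathfrak D_0)$ and need not have an $e$-th root.
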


\begin{proof}
	Set $n=\dim X_0$. Let $t_0$ be a uniformizer at $\mathfrak B_0$, and choose $y_1,\ldots,y_{n-1}\in K(X_0)$ such	that $dt_0,dy_1,\ldots,dy_{n-1}$ form a basis of \(\Omega_{X_0}^1\) at the generic point of $\mathfrak B_0$. Write
	\[
	\sigma=f_0\,dt_0+\sum_{j=1}^{n-1}f_j\,dy_j,
	\qquad f_j\in K(Y_0).
	\]
	A straightforward computation then shows that the regularity of $(d\gamma)(\sigma)$	implies
	\begin{equation}
		\label{eq.Reg-order}
		\ord_{\mathfrak D_0}(f_0)\ge 1-e,
		\qquad
		\ord_{\mathfrak D_0}(f_j)\ge0.
	\end{equation}
	Similarly, if $t_1$ is a uniformizer at $\mathfrak B_1$, then $\tau^*t_0=b t_1^m$ for a unit $b$. Moreover, the elements $dt_1,d(\tau^*y_1),\dots,d(\tau^*y_{n-1})$ form a basis of $\Omega_{X_1}^1$ at the generic point of $\mathfrak{B}_1$ such that
	\[
	d(\tau^*t_0)
	=
	t_1^{m-1}v\,dt_1
	+
	t_1^m\sum_{j=1}^{n-1}b_j\,d(\tau^*y_j),
	\]
	with \(v\) a unit and the \(b_j\) regular. Put \(q=m/e\). By Abhyankar's lemma,
	\[
	e(\mathfrak D_1/\mathfrak B_1)=1,
	\qquad
	e(\mathfrak D_1/\mathfrak D_0)=q,
	\]
	which implies
	\begin{equation}
		\ord_{\mathfrak D_1}(t_1)=1 \quad \text{and}\quad \label{eq.Ab-RI}
		\ord_{\mathfrak D_1}(\tau'^*g)
		=q\,\ord_{\mathfrak D_0}(g),
		\qquad g\in K(Y_0)^\times.
	\end{equation}
	Moreover, we have
	\[
	\Psi(\tau'^*\sigma)
	=(\tau'^*f_0)t_1^{m-1}v\,dt_1\\
	+\sum_{j=1}^{n-1}
	\bigl(\tau'^*f_j+(\tau'^*f_0)t_1^mb_j\bigr)
	d(\tau^*y_j).
	\]
	Then one can easily derive from \eqref{eq.Reg-order} and \eqref{eq.Ab-RI} that $\Psi(\tau'^*\sigma)$ is regular at the generic point of $\mathfrak{D}_1$. 
\end{proof}

\section{Local examples}
\label{ss.Local-Models}

By the spectral correspondence established in Proposition~\ref{prop:general-higgs-factors-through-Z}, the nonexistence of rank one MCM modules on a spectral variety provides a natural local obstruction to the surjectivity of the spectral morphism. In this section, we show that any normal affine variety $Y$ admitting no rank one MCM modules can be realized as such a local obstruction. We conclude by constructing explicit examples of such varieties, following~\cite{Ma2019}.

\subsection{From finite covers to spectral varieties}

Let $Y$ be a normal affine variety of dimension $n$.  Choose a Noether
normalization $p\colon Y\longrightarrow\A^n$ of degree $r$.  Write
\[
  A(Y)=\Gamma(Y,\OO_Y),
  \qquad
  A(\A^n)=\C[x_1,\ldots,x_n].
\]
Choose elements $g_1,\ldots,g_m\in A(Y)$ such that
$g_1,\ldots,g_m$ generate $A(Y)$ as an $A(\A^n)$-algebra.
They define a
closed embedding
\[
  \begin{aligned}
  \iota_Y\colon Y&\lhook\joinrel\longrightarrow\A^{n+m}\ni (x,u),\\
  y&\longmapsto\bigl(p(y),g_1(y),\ldots,g_m(y)\bigr).
  \end{aligned}
\]
Since $\A^n$ is factorial, the reduced divisorial part of the branch locus of
$p$ is defined by a square-free polynomial
$B\in\C[x_1,\ldots,x_n]$, unique up to a non-zero scalar.  Put
$W=Y\times\A^m$, with coordinates $z=(z_1,\ldots,z_m)$ on the second factor, and
consider the finite morphism
\[
  \begin{aligned}
  q=p\times\operatorname{Id}_{\A^m}\colon
  W&\longrightarrow\A^n\times\A^m,\\
  (y,z)&\longmapsto\bigl(p(y),z\bigr).
  \end{aligned}
\]
Define
\begin{equation}\label{eq:def-F}
  F\coloneqq B\sum_{j=1}^m g_jz_j\in A(W).
\end{equation}
For $1\leq i\leq n$, the derivation
$\partial_i=\partial/\partial x_i$ of $\C(x_1,\ldots,x_n,z_1,\ldots,z_m)$
extends uniquely to a derivation
\[
  D_i\colon K(W)\longrightarrow K(W).
\]

\begin{lemma}\label{lem:der-reg}
For every $H\in A(Y)$ and any codimension one point $P\in Y$, we have $\ord_P D_i(H)\geq 1-e$ for every $1\leq i\leq n$, where $e$ is the ramification index of $p$ at $P$.  

In particular, we have $BD_i(H)\in A(Y)$ for every $1\leq i\leq n$.
\end{lemma}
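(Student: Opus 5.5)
The plan is to prove the order estimate by a local computation in the discrete valuation ring $\OO_{Y,P}$, much as in the proof of Lemma~\ref{lem:Ramified-Index}. Then we deduce the global statement from normality of $Y$.

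Since $D_i$ is a derivation of $K(W)\supset K(Y)$ extending $\partial_i$, and $K(Y)/\C(x_1,\dots,x_n)$ is finite separable, $D_i$ maps $K(Y)$ into itself. In fact $D_i|_{K(Y)}$ is the derivation dual to $dx_i$ under the identification $\Omega_{K(Y)/\C}\cong \Omega_{\C(x)/\C}\otimes K(Y)$. Thus for $H\in A(Y)$ we have
\[
dH=\sum_{i=1}^n D_i(H)\,dx_i
\]
in $\Omega^1_{K(Y)}$, i.e.\ $D_i(H)$ are the coefficients of $dH$ in the basis $p^*dx_1,\dots,p^*dx_n$.

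Now fix a codimension one point $P\in Y$ with image $Q=p(P)$, a codimension one point of $\A^n$. If $Q$ is not in the branch divisor, then $e=1$ and $p$ is étale at $P$. In that case $p^*dx_i$ is a basis of $\Omega^1_{Y}$ near $P$, so $D_i(H)$ is regular at $P$, and the bound holds. In general, choose a uniformizer $t$ at $Q$ (a prime factor of the defining polynomial of $Q$) and $y_1,\dots,y_{n-1}$ with $dt,dy_1,\dots,dy_{n-1}$ a basis of $\Omega^1_{\A^n}$ at the generic point of $Q$. Let $u$ be a uniformizer of $\OO_{Y,P}$, so $t=wu^e$ with $w$ a unit. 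Pass to the completion: $\widehat{\OO}_{Y,P}\cong k(P)[[u]]$ and $\widehat{\OO}_{\A^n,Q}\cong k(Q)[[t]]$, where $k(P)/k(Q)$ is finite separable because we are in characteristic $0$. Then $\Omega$ of $\OO_{Y,P}$ is free on $du$ together with lifts of a transcendence basis of $k(P)$, which may be taken to be the $y_j$. We have
\[
dt=u^{e-1}(e w+u w')\,du+u^e(\cdots),
\]
so $du$ has coefficient of order $\ge 1-e$ when written in the basis $dt,dy_j$. Writing $dH=a\,du+\sum_j b_j\,dy_j$ with $a,b_j$ regular at $P$ and re-expressing in the basis $dt,dy_j$ gives coefficients with $\ord_P\ge 1-e$. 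Finally, $dx_i$ differ from $dt,dy_j$ by a matrix invertible at the generic point of $Q$, so $\ord_P D_i(H)\ge 1-e$.

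For the second claim, take $P$ over a component $Q$ of the branch divisor. Since $B$ is square-free, $\ord_P(B)=e\cdot\ord_Q(B)=e$. Hence $\ord_P(BD_i(H))\ge 1>0$. At the unramified $P$ both factors are regular. So $BD_i(H)$ is regular at every codimension one point of the normal affine variety $Y$, and therefore lies in $A(Y)$ by Hartogs.

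The main obstacle is the middle step: justifying that $\Omega^1$ of the DVR $\OO_{Y,P}$ is generated by $du$ and the pulled-back $dy_j$, with residue field extension issues. Completion together with characteristic zero (Cohen structure theorem, separability) is what I would use to settle it.
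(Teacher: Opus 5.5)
Your proof is correct, but it takes a different route from the paper's: it is the dual, differential-form version of the paper's derivation argument.

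\textbf{The paper's route.} The paper works with derivations and argues by contradiction. Via the chain rule it reduces to the generators $g_j$ and assumes $a=\min_j\ord_P D_i(g_j)\le -e$. It then rescales to $\delta=t^{-a}D_i$, which preserves $\mathscr O_{Y,P}$ and sends some $g_k$ to a unit. Differentiating $p^*s=ut^e$ forces $\delta(t)\in t\mathscr O_{Y,P}$. Hence $\delta$ induces a derivation of $\kappa(P)$ vanishing on $\kappa(Q)$, which is zero by separability, a contradiction.

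\textbf{Your route.} You work with K\"ahler differentials in the adapted basis $du,dy_1,\dots,dy_{n-1}$ and invert the expression for $dt$.

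\textbf{Comparison.} Both arguments use the same two inputs: $e\neq 0$ in $\mathbb C$, and separability of $\kappa(P)/\kappa(Q)$. The paper's version needs no structural statement about $\Omega_{\mathscr O_{Y,P}/\mathbb C}$. Yours needs no reduction to generators and treats every element of $\mathscr O_{Y,P}$ directly. It also gives a sharper conclusion: only the $dt$-coefficient can have a pole, while the $dy_j$-coefficients stay regular. This is exactly the computation the paper calls ``straightforward'' in Lemma~\ref{lem:Ramified-Index}.

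\textbf{The step you flagged.} Completion is the wrong tool there, since K\"ahler differentials do not commute with completion. Use instead the exact sequence $0\to\mathfrak m_P/\mathfrak m_P^2\to\Omega_{\mathscr O_{Y,P}/\mathbb C}\otimes\kappa(P)\to\Omega_{\kappa(P)/\mathbb C}\to0$, which is left exact in characteristic zero. Combine it with $\Omega_{\kappa(P)/\mathbb C}\cong\Omega_{\kappa(Q)/\mathbb C}\otimes_{\kappa(Q)}\kappa(P)$; then Nakayama shows that $du,dy_1,\dots,dy_{n-1}$ generate $\Omega_{\mathscr O_{Y,P}/\mathbb C}$. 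The kernel of the resulting surjection from $\mathscr O_{Y,P}^{\,n}$ has generic rank zero. It is therefore a torsion submodule of a free module, hence zero, so these elements form a basis.
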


\begin{proof}
Since $Y$ is normal, it is enough to check the regularity of $BD_i(H)$ at every codimension-one point $P\in Y$.  Let $Q=p(P)$ and let $e$ be the ramification index of $P$ over $Q$.  By the definition of $B$,
\[
  \ord_P(B)\geq e-1.
\]
It therefore suffices to prove
$\ord_PD_i(H)\geq1-e$. Set
\[
  a\coloneqq
  \min_{1\leq j\leq m}\ord_PD_i(g_j).
\]
Since $D_i(A(\mathbb{C}^n))\subset A(\mathbb{C}^n)$, and $A(Y)$ is generated by $1$ and the $g_j$'s as an $A(\mathbb{C}^n)$-algebra, the chain rule shows that it is enough to prove that $a\geq1-e$.  Suppose, to the
contrary, that $a\leq -e$.  Choose $k$
with $\ord_PD_i(g_k)=a$. Let $t$ be a uniformizer of the discrete valuation
ring $\mathscr{O}_{Y,P}$, and let $\delta=t^{-a}D_i$. Then $\delta(\mathscr{O}_{Y,P})\subset\mathscr{O}_{Y,P}$ and $\delta(g_k)$ is a unit in $\mathscr{O}_{Y,P}$. Let $s$ be a uniformizer of $\mathscr{O}_{\A^n,Q}$. Then we have
\begin{equation}\label{eq:ramification-index}
  p^*s=ut^e
\end{equation}
for some unit $u\in\mathscr{O}_{Y,P}$.  Since $D_i$ preserves
$\mathscr{O}_{\A^n,Q}$, we have
\[
  \ord_P\delta(p^*s)=\ord_P t^{-a} + \ord_P D_i(p^*s) \geq-a\geq e.
\]
Applying $\delta$ to \eqref{eq:ramification-index} gives
\[
  t^e\mathscr{O}_{Y,P}\ni \delta(p^*s)=\delta(u)t^e+eut^{e-1}\delta(t),
\]
and hence $\delta(t)\in t\mathscr{O}_{Y,P}$.  Thus $\delta$ induces a derivation
$\bar\delta$ of the residue field $\kappa(P)$.  Moreover,
\[
  \delta(\mathscr{O}_{\A^n,Q})=t^{-a}D_i(\mathscr{O}_{\mathbb{A}^n,Q})\subset t^{-a}\mathscr{O}_{Y,P}
  \subset t\mathscr{O}_{Y,P},
\]
so $\bar\delta$ vanishes on $\kappa(Q)$.  The extension
$\kappa(P)/\kappa(Q)$ is finite and separable; therefore $\bar\delta=0$.  It
follows that $\delta(\mathscr{O}_{Y,P})\subset t\mathscr{O}_{Y,P}$, contradicting the fact that $\delta(g_k)$ is a unit.
\end{proof}

\begin{lemma}
\label{lem:dBD_K(g)}
    Let $P$ be a codimension one point of $Y$ such that $Q=p(P)\in \mathbb{A}^n$ is contained in the branch locus of $p$. Let $t$ be a uniformizer of $\mathscr{O}_{Y,P}$. Then for any $g\in \mathscr{O}_{Y,P}$, we have
    \[
    d(BD_k(g)) = \frac{D_k(B)}{e} dg \in \Omega_{Y/\mathbb{A}^n}^1\otimes \kappa(P).
    \]
\end{lemma}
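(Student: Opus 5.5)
The plan is to work entirely in the discrete valuation ring $\mathscr O_{Y,P}$ over $\mathscr O_{\A^n,Q}$, reduce both sides of the identity to multiples of $dt$, and compare the coefficients using the rescaled derivation $\delta=t^{e-1}D_k$ that already appeared in the proof of Lemma~\ref{lem:der-reg}. Let $e$ be the ramification index of $P$ over $Q$, and let $s$ be a uniformizer of $\mathscr O_{\A^n,Q}$, so that $p^*s=ut^e$ with $u$ a unit.

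\emph{The case $e=1$, and the structure of the target.} Since $\kappa(P)/\kappa(Q)$ is separable, $\Omega_{\kappa(P)/\kappa(Q)}=0$. The conormal sequence $\mathfrak m_P/\mathfrak m_P^2\to\Omega_{Y/\A^n}^1\otimes\kappa(P)\to\Omega_{\kappa(P)/\kappa(Q)}\to0$ then shows that $\Omega^1_{Y/\A^n}\otimes\kappa(P)$ is generated by $dt$.
\begin{itemize}
\item If $e=1$, then $p$ is étale at $P$, so $\Omega^1_{Y/\A^n}\otimes\kappa(P)=0$ and there is nothing to prove.
\item Assume from now on that $e\ge2$. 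For every $g\in\mathscr O_{Y,P}$ write $dg=g'\,dt$ in $\Omega^1_{Y/\A^n}\otimes\kappa(P)$, with $g'\in\kappa(P)$.
\end{itemize}
Because $B$ is square-free and $Q$ lies in the branch divisor, we have $B=c\,s$ with $c$ a unit of $\mathscr O_{\A^n,Q}$. Hence $p^*B=c\,u\,t^e$.

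\emph{Step 1: the derivation $\delta$ induces a map on relative differentials.} By Lemma~\ref{lem:der-reg}, $\delta=t^{e-1}D_k$ maps $\mathscr O_{Y,P}$ into itself. For $f\in\mathscr O_{\A^n,Q}$ we have $\delta(f)=t^{e-1}D_k(f)\in t^{e-1}\mathscr O_{Y,P}\subset t\mathscr O_{Y,P}$, using $e\ge2$. So $\delta \bmod t$ is an $\mathscr O_{\A^n,Q}$-linear derivation $\mathscr O_{Y,P}\to\kappa(P)$, and it factors through $\Omega^1_{Y/\A^n}\otimes\kappa(P)$. It follows that
\[
\delta(g)\equiv g'\,\delta(t)\pmod{t}.
\]
Applying $\delta$ to $p^*s=ut^e$ gives $t^{e-1}D_k(s)=\delta(u)t^e+eut^{e-1}\delta(t)$. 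Hence
\[
\delta(t)\equiv \frac{D_k(s)}{e\,u}\pmod t.
\]

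\emph{Step 2: the computation.} Write $BD_k(g)=c\,u\,t\,\delta(g)$. Apply the relative differential, in which $dc=0$. The terms carrying a factor $t$ vanish in $\Omega^1\otimes\kappa(P)$, so
\[
d(BD_k(g))\equiv c\,u\,\delta(g)\,dt\equiv c\,u\,g'\,\frac{D_k(s)}{e\,u}\,dt=\frac{c\,D_k(s)}{e}\,dg .
\]
Finally, $D_k(B)=D_k(c)\,s+c\,D_k(s)\equiv c\,D_k(s)\pmod t$, since $s\in t^e\mathscr O_{Y,P}$. This gives the claimed identity.

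The step I expect to need the most care is Step~1. The point is that $\delta$ kills the base only modulo $t$, and this uses $e\ge2$, which is exactly why the case $e=1$ has to be separated out. The same step also relies on the identification of $\Omega^1_{Y/\A^n}\otimes\kappa(P)$ as generated by $dt$.
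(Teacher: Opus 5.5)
Your proposal is correct and follows essentially the same route as the paper. The paper reduces $BD_k$ modulo $t^2$ to an $\mathscr O_{\A^n,Q}$-derivation into $t\mathscr O_{Y,P}/t^2\mathscr O_{Y,P}\cong\Omega^1_{Y/\A^n}\otimes\kappa(P)$, factors it through $d$, and evaluates it on $dt$ using $p^*B=ut^e$; this is exactly your computation, written as $BD_k=c\,u\,t\,\delta$ with $\delta=t^{e-1}D_k$ reduced mod $t$. The remaining differences are cosmetic: you write $B=cs$ instead of taking $B$ itself as the uniformizer, and you do not need the isomorphism $t\mathscr O_{Y,P}/t^2\mathscr O_{Y,P}\cong\Omega^1_{Y/\A^n}\otimes\kappa(P)$.
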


\begin{proof}
    By Lemma \ref{lem:der-reg} (and its proof), the derivation $BD_k$ acts as zero on $\kappa(P)$, i.e. $BD_k(\mathscr{O}_{Y,P})\subset t\mathscr{O}_{Y,P}$. Moreover, without loss of generality, we may assume that $\Omega_{Y/\mathbb{A}^n}^1\otimes \kappa(P)\not=0$ with $dt$ a basis; that is, the ramification index $e$ of $p$ at $P$ is at least two. Then, by Lemma \ref{lem:der-reg}, we have $BD_k(\mathscr{O}_{\mathbb{A}^n,Q})\subset t^2 \mathscr{O}_{Y,P}$ and $BD_k(\mathscr{O}_{Y,P})\subset t\mathscr{O}_{Y,P}$. It therefore follows that the induced morphism 
    \[
    \overline{BD}_k\colon \mathscr{O}_{Y,P}\xlongrightarrow{BD_k} t\mathscr{O}_{Y,P}\xlongrightarrow{d}  t\mathscr{O}_{Y,P}/t^2\mathscr{O}_{Y,P}\cong \Omega^1_{Y/\mathbb{A}^n}\otimes \kappa(P)
    \]
    is an $\mathscr{O}_{\mathbb{A}^n,Q}$-derivation. In particular, by the universal property of differentials, there exists a unique morphism $\delta\colon \Omega^1_{Y/\mathbb{A}^n}\otimes \kappa(P)\rightarrow \Omega^1_{Y/\mathbb{A}^n}\otimes \kappa(P)$ of $\kappa(P)$-vector spaces such that $\overline{BD}_k=\delta\circ d$. On the other hand, note that $B$ is a uniformizer of $\mathscr{O}_{\mathbb{A}^n,Q}$, it then follows from \eqref{eq:ramification-index} that
    \[
    BD_k(t) = \frac{D_k(B)}{e}t \mod \langle t^2\rangle \quad \bigl(\in \,\mathscr{O}_{Y,P}\bigr).
    \]
    This implies
    \[
    \delta(dt) = \frac{D_k(B)}{e} dt  \,\in\Omega_{Y/\mathbb{A}^n}^1\otimes \kappa(P).
    \]
    Then the results follow because $d\circ (BD_k)=\overline{BD}_k=\delta\circ d$ and $\delta$ is $\kappa(P)$-linear.
\end{proof}

Lemma~\ref{lem:der-reg} shows that each $D_i(F)$ is regular on $W$.  We 
therefore define
\begin{equation}\label{eq:morphism-f}
  \begin{aligned}
  f\colon W&\longrightarrow\A^{2n+2m}\ni (x,z,x',z'),\\
  (y,z)&\longmapsto
  \bigl(p(y),z,D_1(F),\ldots,D_n(F),
        Bg_1,\ldots,Bg_m\bigr).
  \end{aligned}
\end{equation}

\begin{remark}
\label{rem.Geom-f}
The morphism \eqref{eq:morphism-f} can be described as follows. Any rational function $H\in K(W)$ determines a rational section of the vector bundle $q^*\Omega_{\mathbb A^{n+m}}^1\to W$ given by
\[
\sigma_H
\coloneqq
\sum_{i=1}^n D_i(H)\,dx_i
+
\sum_{j=1}^m \frac{\partial H}{\partial z_j}\,dz_j.
\]
Lemma~\ref{lem:der-reg} shows that $\sigma_F$ is in fact a regular section. Identifying $\mathbb A^{2n+2m}$, with coordinates $(x,z,x',z')$, with $T^*\mathbb A^{n+m}$, where $x_i'$ and $z_j'$ correspond to $dx_i$ and $dz_j$, respectively, the morphism $f$ is precisely the composition
\[
W
\xlongrightarrow{\ \sigma_F\ }
q^*T^*\mathbb A^{n+m}
\longrightarrow
T^*\mathbb A^{n+m}
=
\mathbb A^{2n+2m},
\]
and $f^*\lambda_{\operatorname{taut}}=dq(\sigma_F)=dF$. In particular, our next result shows that the map $f\colon W\rightarrow T^*\mathbb{A}^{n+m}$ is an \emph{exact Lagrangian embedding} in codimension one.
\end{remark}

\begin{proposition}\label{prop:f-codim-one}
There is an open
subset $W^\circ\subset W$, with
$\operatorname{codim}(W\setminus W^\circ)\geq2$, such that
$f|_{W^\circ}$ is an embedding.
\end{proposition}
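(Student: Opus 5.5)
Here $f=(q,\sigma)$ where the last coordinates are $D_i(F)$ and $\partial F/\partial z_j=Bg_j$. To show $f$ is an embedding off codimension two, it suffices to find an open $W^\circ$ with complement of codimension $\ge2$ on which $f$ is unramified and injective, and such that $W^\circ\to f(W^\circ)$ is a closed immersion onto its image in an open set. Since $q=p\times\mathrm{Id}$ is finite, $f$ is finite, hence proper. So it suffices to treat two things: the generic point of $W$ together with the generic points of every prime divisor of $W$, and the injectivity of $f$ on closed points outside a codimension-two set. I will argue that $f$ is birational onto its image, with normalization map equal to $f$ itself, and that it is unramified at every codimension-one point. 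Then the conductor of $\OO_{f(W)}\subset f_*\OO_W$ defines a locus whose preimage has codimension $\ge2$, and off that locus $f$ is an isomorphism onto its image.

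\textbf{Step 1: the unbranched locus.} Let $U\subset\A^n$ be the complement of $\{B=0\}$, so that $p$ is étale over $U$ away from codimension two. Over $U$, the map $W\to\A^{n+m}\times\A^m$, $(y,z)\mapsto(p(y),z,Bg_1,\dots,Bg_m)$, already separates points in a fibre of $q$. Indeed, $B$ is invertible there, and the $g_j$ together with $p$ give the closed embedding $\iota_Y$. The same coordinates separate tangent vectors in the fibre direction, because $\iota_Y$ is an embedding and $p$ is étale, while $q$ handles the base directions. Hence $f$ is a closed embedding over $q^{-1}(U\times\A^m)$ minus codimension two.

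\textbf{Step 2: divisors over $\{B=0\}$.} Here $Bg_j$ vanishes along ramified divisors, so the coordinates $D_i(F)$ must do the work. Let $P$ be a prime divisor of $Y$ with ramification index $e$ over $Q$; the relevant divisors of $W$ are $P\times\A^m$. I split into cases.
\begin{itemize}
\item[(a)] If $e=1$, then $Bg_j$ still equals $B$ times a separating function. Distinct points of $P$ over the same point of $Q$ are separated by the $D_i(F)=D_i(B)\sum g_jz_j+B\sum D_i(g_j)z_j$ at a general $z$. Since $D_i(B)\neq0$ for some $i$ (because $B$ is square-free), these coordinates restrict to $D_i(B)\sum g_jz_j$ modulo $B$. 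Taking two different points, this is injective for general $z$ because the $g_j$ separate them.
\item[(b)] If $e\ge2$, I use Lemma~\ref{lem:dBD_K(g)}: $d(BD_k(g_j))=\tfrac{D_k(B)}{e}\,dg_j$ in $\Omega^1_{Y/\A^n}\otimes\kappa(P)$. Consequently $d(D_k(F))$ relative to the base equals $\tfrac{D_k(B)}{e}\sum z_j\,dg_j$ up to a factor. Since the $g_j$ generate $\OO_{Y,P}$ over $\OO_{\A^n,Q}$, some $dg_j$ spans $\Omega^1_{Y/\A^n}\otimes\kappa(P)$, so for general $z$ the map $f$ is unramified at the generic point of $P\times\A^m$.
\end{itemize}
Separation of distinct branches over the same divisor $Q$ follows as in (a), using the leading values of $BD_k(g_j)$ along $P$, which are $D_k(B)/e$ times the residue of $g_j$ rather than zero.

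\textbf{Step 3: conclusion.} The above shows that $f$ is unramified, and injective on points, at every codimension-one point of $W$. Hence the non-embedding locus is a closed set, being the support of $\mathrm{coker}(\OO_{f(W)}\to f_*\OO_W)$ together with the ramification locus, containing no divisor. Take $W^\circ$ to be its complement.

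The main obstacle is Step 2(b): controlling injectivity and unramifiedness over ramified divisors in the fibre directions. This is where the precise leading-order behaviour of $BD_k$ from Lemma~\ref{lem:dBD_K(g)} is needed, together with the auxiliary variables $z$, which make the relevant linear combination of $dg_j$ non-vanishing generically.
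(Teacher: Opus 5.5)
Your overall route is the paper's. Over $\{B\neq0\}$ you recover $g_j=z'_j/B$. Over a component $\mathfrak B$ of $V(B)$ you pick $k$ with $\partial_kB\not\equiv0$ on $\mathfrak B$ and separate points with the coordinate $D_k(F)$. Unramifiedness along ramified divisors comes from Lemma~\ref{lem:dBD_K(g)}. Your conductor argument in Step~3 is a reasonable way to make the paper's last sentence precise; before applying Nakayama it needs generic injectivity on each divisor, hence $\kappa(f(\eta))=\kappa(\eta)$ in characteristic $0$.

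The problem is point separation over ramified divisors. Your closing remark in Step~2 asserts that the values of $BD_k(g_j)$ along a ramified $P$ are $\frac{D_k(B)}{e}$ times the residue of $g_j$, ``rather than zero''. That is false. Since $B$ is square-free, it is a uniformizer at the generic point of $\mathfrak B$, so $\ord_P(B)=e$, and Lemma~\ref{lem:der-reg} gives $\ord_P\bigl(BD_k(g_j)\bigr)\geq e+(1-e)=1$. Thus $BD_k(g_j)$ vanishes on every divisor over $\mathfrak B$, whatever $e$ is. Lemma~\ref{lem:dBD_K(g)} describes only the linear term of $BD_k(g_j)$, i.e. its differential in $\Omega^1_{Y/\A^n}\otimes\kappa(P)$. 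That is why it governs unramifiedness, but it tells you nothing about values.

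As written, your separation step does not go through. What must be shown is that $D_k(F)$ separates all points of a fibre $q^{-1}(q(w))$. Such a fibre typically meets several divisors over $\mathfrak B$ with different ramification indices, whereas your case (a) only separates points on a single unramified $P$. With the values you claim, a sheet of index $e\geq2$ would carry $(1+\frac1e)D_k(B)\sum_jg_jz_j$. Separating sheets of different indices would then require $(1+\frac1{e_a})g(y_a)\neq(1+\frac1{e_b})g(y_b)$, and injectivity of $\iota_Y$ does not give that.

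The fix is what the paper does. By the vanishing just noted, $D_k(F)(y,z)=D_k(B)(p(y))\sum_jg_j(y)z_j$ at every point of $q^{-1}(\mathfrak B\times\A^m)$, uniformly in $e$. For $x\in\mathfrak B$ with $D_k(B)(x)\neq0$ and general $z$, the values $\sum_jg_j(y_a)z_j$ for $y_a\in p^{-1}(x)$ are pairwise distinct. This gives injectivity in codimension one directly, with no case split on $e$.

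A smaller point in (b): the relative differential is
$D_k(B)\sum_jz_j\,dg_j+\sum_jz_j\,d(BD_k(g_j))=\frac{e+1}{e}D_k(B)\sum_jz_j\,dg_j$.
You dropped the first summand and wrote ``up to a factor''. Nonvanishing uses exactly that the two contributions do not cancel, i.e. $e+1\neq0$, so you should say so.
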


\begin{proof}
	Consider the commutative diagram
	\[
	\begin{tikzcd}[row sep=large, column sep=large]
		(y,z)\in W
		\arrow[r,"f"]
		\arrow[d,hook,"\iota_Y\times\operatorname{Id}_{\mathbb{A}^m}"']
		\arrow[dr,"q"]
		&
		\mathbb{A}^{2n+2m}\ni(x,z,x',z')
		\arrow[d,"\operatorname{pr}_{x,z}"]
		\\
		(x,u,z)\in \mathbb{A}^{n+2m}
		\arrow[r,"\operatorname{pr}_{x,z}"']
		&
		\mathbb{A}^{n+m}\ni (x,z).
	\end{tikzcd}
	\]
	Over the open set $B\neq 0$, notice that the coordinates
$u_j=g_j(y)$ can be recovered from $f(y,z)$ by the formula
\[
u_j=\frac{z'_j}{B(x)}.
\]
Thus $f$ is a closed embedding over $B\neq 0$ and it only remains to consider the points contained in the closed subset $B=0$.

Firstly, we consider the injectivity of $f$. Let $\sB$ be an irreducible component of the zero locus $V(B)$ of $B$ in $\mathbb{A}^n$.  Since $B$ is square-free,
there is an index $k$ such that $\partial B/\partial x_k$ does not vanish
identically on $\sB$.  For a point $w=(y,z)\in q^{-1}(\sB\times\A^m)$, by Lemma \ref{lem:der-reg}, we have
\begin{equation}\label{eq:DkF-branch}
  D_k(F)(w)=D_k(B)(p(y))\sum_{j=1}^m g_j(y)z_j(w).
\end{equation}
Fix a point $x\in \mathfrak{B}$ such that $D_k(B)(x)\not=0$. Write the fiber $p^{-1}(x)$ as $\{y_1,\ldots,y_\ell\}$. Then the vectors
\[
  \bigl(g_1(y_a),\ldots,g_m(y_a)\bigr)
  \qquad(1\leq a\leq\ell)
\]
are pairwise distinct because $\iota_Y$ is an embedding. Then for a general point $z=(z_1,\dots,z_m)$, the following values
\[
\sum_{j=1}^m g_j(y_a) z_j\quad (1\leq a\leq \ell)
\]
are again pairwise distinct. As a consequence, there exists an open subset $U$ of $q^{-1}(\mathfrak{B}\times \mathbb{A}^m)$ such that for any point $w=(y,z)\in U$, we have $D_k(B)(p(y))\not=0$ and $D_k(F)$ separates the points contained in the fiber $q^{-1}(q(w))$. Hence $f$ is injective in codimension one.

Next we consider the injectivity of the differentials of $f$. Let $\mathfrak{D}$ be an irreducible component of $q^{-1}(\mathfrak{B}\times \mathbb{A}^m)$, dominating $\mathfrak{B}$. 
Let $t$ be a uniformizer of $\mathscr{O}_{W,\eta}$, where $\eta$ is the generic point of $\mathfrak{D}$. Assume that $D_k(B)\not= 0$ in the residue field of the generic point of $\mathfrak{B}$. Note that there exists $g_s$ such that $dg_s$ is a basis of $\Omega_{W/\mathbb{A}^{n+m},\eta}^1\otimes \kappa(\eta)$ over $\kappa(\eta)$. Then, by Lemmas \ref{lem:der-reg} and \ref{lem:dBD_K(g)}, we have
\begin{align*}
0\not=f^* dx'_k = d(D_k(F)) 
    & = D_k(B) \sum_{j=1}^m z_j dg_j + \sum_{j=1}^m z_j d(BD_k(g_j)) \\
    & = \frac{e+1}{e} D_k(B)\sum_{j=1}^m z_j dg_j\in \Omega^1_{W/\mathbb{A}^{n+m},\eta}\otimes \kappa(\eta).
\end{align*}
It therefore follows from Nakayama's lemma that there exists a dense open subset $\mathfrak{D}^{\circ}$ of $\mathfrak{D}$ such that the differential of $f$ along $\mathfrak{D}^{\circ}$ is injective.  Since $p$ is finite, it follows that the differential of $f$ is injective in codimension one and hence $f$ is an embedding in codimension one by the injectivity established in the previous step, as required.

\end{proof}

\begin{corollary}\label{cor:local-model}
The morphism $f$ determines a non-zero generically regular spectral datum $s\in S_{\A^{n+m}}^r$ such that $\mathcal H_{\A^{n+m}}^r(s)$ is canonically identified with $\mathcal{M}_{\cm}(Y)$.
\end{corollary}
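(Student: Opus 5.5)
The plan is to apply Proposition~\ref{prop:general-higgs-factors-through-Z} to the finite morphism $q\colon W\to\A^{n+m}$ and the morphism $f\colon W\to T^*\A^{n+m}=\A^{2n+2m}$ of \eqref{eq:morphism-f}. Then it remains to identify $\mathcal M_{\cm}(W)$ with $\mathcal M_{\cm}(Y)$.

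\emph{Checking the hypotheses.} The space $X=\A^{n+m}$ is smooth. The variety $W=Y\times\A^m$ is normal, since $Y$ is normal, and it is irreducible. The morphism $q=p\times\operatorname{Id}$ is finite and surjective of degree $r=\deg p$. By construction (Remark~\ref{rem.Geom-f}), $\pi\circ f=q$, because the first $n+m$ coordinates of $f$ are $(p(y),z)$. Proposition~\ref{prop:f-codim-one} shows that $f$ is an embedding on an open subset $W^\circ$ whose complement has codimension at least two. Hence Proposition~\ref{prop:general-higgs-factors-through-Z} yields a generically regular spectral datum $s\in S^r_{\A^{n+m}}$ and a natural bijection $\mathcal M_{\cm}(W)\to\mathcal H^r_{\A^{n+m}}(s)$.

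\emph{Showing $s\neq 0$.} By the last assertion of Proposition~\ref{prop:general-higgs-factors-through-Z}, it suffices to see that $f(W)$ does not lie in the zero section. Its $z'$-coordinates are $Bg_j$. These cannot all vanish: $B\neq0$, and if all $g_j$ were zero then $A(Y)=\C[x]$, so $r=1$. In that degenerate case I would instead use the $x'$-coordinates, or choose the generators to include a nonzero element. So $f(W)$ is not contained in the zero section, and $s\ne0$.

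\emph{Identifying $\mathcal M_{\cm}(W)$ with $\mathcal M_{\cm}(Y)$.} Let $\pr\colon W\to Y$ be the projection; it is smooth with fibres $\A^m$. The map is $\mathscr L\mapsto\pr^*\mathscr L$.
\begin{itemize}
\item \emph{Well defined.} $\pr^*\mathscr L$ has rank one. It is MCM because, for a flat local homomorphism $\mathscr O_{Y,y}\to\mathscr O_{W,w}$ with regular closed fibre of dimension $d$, depth and dimension both increase by $d$.
\item \emph{Injective.} Restricting to $Y\times\{0\}$ recovers $\mathscr L$.
\item \emph{Surjective.} Let $\mathscr M\in\mathcal M_{\cm}(W)$. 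Via the injection $\mathcal M_{\cm}(W)\hookrightarrow\Cl(W)$, write $\mathscr M\cong\mathscr O_W(D')$. Since $\Cl(Y\times\A^m)\cong\Cl(Y)$ via $\pr^*$, we get $D'\sim\pr^*D$, so $\mathscr M\cong\pr^*\mathscr O_Y(D)$, using that pullback of a reflexive rank one sheaf by a flat map is reflexive. The same depth formula, applied at points $(y,0)$ and using faithful flatness of $\mathscr O_{Y,y}\to\mathscr O_{W,(y,0)}$, gives $\operatorname{depth}\mathscr O_Y(D)_y=\operatorname{depth}\mathscr M_{(y,0)}-m=\dim\mathscr O_{Y,y}$. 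Hence $\mathscr O_Y(D)\in\mathcal M_{\cm}(Y)$.
\end{itemize}

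I expect this descent of the MCM property along $\pr$, together with the use of $\Cl(Y\times\A^m)=\Cl(Y)$, to be the main point to write carefully. The rest is a direct assembly of the earlier results.
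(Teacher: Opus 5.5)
Your proposal is correct and follows the paper's own argument. You apply Proposition~\ref{prop:general-higgs-factors-through-Z} together with Proposition~\ref{prop:f-codim-one} to identify $\mathcal H^r_{\A^{n+m}}(s)$ with $\mathcal M_{\cm}(W)$, and then use $\Cl(W)\cong\Cl(Y)$, reflexivity of flat pullbacks and the depth formula along the smooth projection $W\to Y$ to identify $\mathcal M_{\cm}(W)$ with $\mathcal M_{\cm}(Y)$. Your explicit check that $s\neq0$ via the $z'$-coordinates $Bg_j$ is a useful addition that the paper leaves implicit. In the degenerate case where all $g_j=0$ one has $F=0$, so the $x'$-coordinates also vanish; the correct fix there is your second one, choosing a nonzero generator.
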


\begin{proof}
Identify $\A^{2n+2m}$, with coordinates $(x,z,x',z')$, with
$T^*\A^{n+m}$ by letting $x_i'$ and $z_j'$ correspond to $dx_i$ and $dz_j$,
respectively.  The composition of $f$ with the cotangent projection is $q$.
Propositions~\ref{prop:general-higgs-factors-through-Z} and
\ref{prop:f-codim-one} therefore give a generically regular datum $s$ for
which $\mathcal H_{\A^{n+m}}^r(s)$ is identified with $\mathcal{M}_{\cm}(W)$. 

Note that there exists a natural isomorphism $\Cl(W)\cong \Cl(Y)$ by pulling back along the projection $p_Y\colon W=Y\times \mathbb{A}^m\rightarrow Y$, so it remains to prove that for a divisor $D$ on $Y$, the $\mathscr{O}_W$-module $\mathscr{O}_W(p_Y^*D)$ is Cohen--Macaulay if and only if the $\mathscr{O}_Y$-module $\mathscr{O}_Y(D)$ is Cohen--Macaulay. Since $\mathscr{O}_Y(D)$ is reflexive and $p_Y$ is flat, the pullback $p_Y^*\mathscr{O}_Y(D)$ is also reflexive \cite[Proposition 1.8]{Hartshorne1980}, which then implies $\mathscr{O}_W(p_Y^* D)\cong p_Y^*\mathscr{O}_Y(D)$, because they agree over the big open subset $Y_{\operatorname{reg}}\times \mathbb{A}^m$ of $W$. Here we note that $p_Y^*D$ is well-defined by restricting to the generic points of the support of $D$. Finally, since $p_Y$ is smooth, by the depth formula \cite[Lemma 10.163.1]{StacksProject}, the pullback $p^*_Y\mathscr{O}_Y(D)\cong \mathscr{O}_W(p_Y^*D)$ is Cohen--Macaulay if and only if $\mathscr{O}_Y(D)$ is Cohen--Macaulay.
\end{proof}

\begin{remark}
    In the local setting, it is not necessary to work with the morphism $f\colon W\longrightarrow T^*\mathbb A^{n+m}$    induced by the function $F$ as in Remark~\ref{rem.Geom-f}. Instead, one may take any regular section $\sigma\in H^0\bigl(W,q^*\Omega_{\mathbb A^{n+m}}^1\bigr)$, for instance $\sigma=\sum_j g_j\,dz_j$, such that the induced morphism
    \[
    W\xlongrightarrow{\ \sigma\ } q^*T^*\mathbb A^{n+m}
    \longrightarrow T^*\mathbb A^{n+m}
    \]
    is an embedding in codimension one. However, for our later application to compactify the local model to a global one, we require that $\sigma$ arises from a regular function on $W$, such as $F$. 
\end{remark}

\subsection{Cones without rank one MCM sheaves}
\label{sec:cones}

We now construct normal affine varieties which admit no rank one MCM sheaves, following the idea from \cite{Ma2019}. Let $(Z,\mathscr{O}_Z(1))$ be a \emph{projectively normal} polarized smooth projective variety of dimension $n-1\geq2$; in other words, the section ring
\[
R(Z,\mathscr{O}_Z(1))\coloneqq \bigoplus_{q\geq 0} H^0(Z,\mathscr{O}_Z(q))
\]
is generated in degree one.  Then the \emph{cone} over $(Z,\mathscr{O}_Z(1))$ is defined to be the normal affine variety
\[
  Y\coloneqq
  \Spec\!R(Z,\mathscr{O}_Z(1)).
\]
The natural grading defines a $\mathbb{C}^{\times}$-action on $Y$ with a unique fixed point $o$, called the \emph{vertex}. Then $Y$ is normal with the vertex $o$ being its unique singular point. The punctured cone carries a
natural $\C^\times$-bundle
\[
  p_Y\colon Y\setminus\{o\}\longrightarrow Z,
\]
and pullback induces an isomorphism
\begin{equation}
	\label{eq.Isom-Cl}
	  \Pic(Z)/\mathbb Z[\mathscr{O}_Z(1)]
	\xrightarrow{\ \cong\ }
	\Pic(Y\setminus\{o\})
	\cong\Cl(Y).
\end{equation}
Recall that a line bundle $\mathscr L$ on $Z$ is \emph{arithmetically
Cohen--Macaulay} (\emph{aCM} for short) with respect to $\mathscr{O}_Z(1)$ if $  H^i(Z,\mathscr L(q))=0$ for every $q\in\mathbb Z$ and every $0<i<\dim Z=n-1$. The following result relates the existence of rank one MCM sheaves on $Y$ to the existence of aCM line bundles on $(Z,\mathscr{O}_Z(1))$.

\begin{proposition}\label{prop:cone-acm}
Let $(Z,\mathscr{O}_Z(1))$ be a projectively normal polarized projective manifold with $Y$ its cone. Then the following conditions are equivalent.
\begin{enumerate}
\item\label{item:cone-global-mcm}
The cone $Y$ admits a rank one MCM sheaf;
\item\label{item:cone-local-mcm}
The local ring $\OO_{Y,o}$ admits a rank one MCM module;
\item\label{item:base-acm}
The polarized projective manifold $(Z,\OO_Z(1))$ admits an aCM line bundle.
\end{enumerate}
\end{proposition}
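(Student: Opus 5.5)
The plan is to prove the cycle of implications (2.3.1)$\Rightarrow$(2.3.2)$\Rightarrow$(2.3.3)$\Rightarrow$(2.3.1). The basic dictionary is the isomorphism \eqref{eq.Isom-Cl} together with local cohomology at the vertex. For a line bundle $\mathscr L$ on $Z$, consider the graded reflexive rank one module
\[
M_{\mathscr L}\coloneqq\bigoplus_{q\in\mathbb Z}H^0(Z,\mathscr L(q))
\]
over $R=R(Z,\mathscr O_Z(1))$. By \eqref{eq.Isom-Cl}, every divisorial sheaf on $Y$ is of the form $\widetilde{M_{\mathscr L}}$: it is $(\iota)_*$ of the sheaf on $Y\setminus\{o\}$ corresponding to $\mathscr L$, and the global sections of that sheaf are $\bigoplus_q H^0(Z,\mathscr L(q))$ via the $\mathbb C^\times$-bundle $p_Y$. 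The key formula I will use is the standard one for cones:
\[
H^{i+1}_{\mathfrak m}(M_{\mathscr L})\cong\bigoplus_{q\in\mathbb Z}H^i(Z,\mathscr L(q))\qquad (i\ge1),
\]
together with $H^0_{\mathfrak m}=H^1_{\mathfrak m}=0$ by reflexivity/normality. This is obtained from the exact sequence $0\to H^0_{\mathfrak m}\to M\to H^0(Y\setminus\{o\},\cdot)\to H^1_{\mathfrak m}\to0$ and $H^{i}(Y\setminus\{o\},\cdot)\cong H^{i+1}_{\mathfrak m}$, computing cohomology on $Y\setminus\{o\}$ via the affine morphism $p_Y$ as a direct sum over weights. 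Consequently $M_{\mathscr L}$ is MCM at $o$, i.e.\ $H^j_{\mathfrak m}=0$ for $j<n$, if and only if $H^i(Z,\mathscr L(q))=0$ for all $q$ and $0<i<n-1$; that is, if and only if $\mathscr L$ is aCM.

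For (2.3.1)$\Rightarrow$(2.3.2), localize: a rank one MCM sheaf on $Y$ has an MCM stalk at $o$. For (2.3.3)$\Rightarrow$(2.3.1), take an aCM $\mathscr L$. Then $\widetilde{M_{\mathscr L}}$ is a rank one reflexive sheaf, invertible on $Y\setminus\{o\}$ (hence CM there, since $Y\setminus\{o\}$ is smooth), and CM at $o$ by the formula above. Here graded local cohomology at the homogeneous maximal ideal computes depth at $o$, because $\mathfrak m$ is the unique graded maximal ideal. So $\widetilde{M_{\mathscr L}}\in\mathcal M_{\cm}(Y)$.

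The main obstacle is (2.3.2)$\Rightarrow$(2.3.3), since a rank one MCM module $N$ over $\mathscr O_{Y,o}$ need not a priori be graded. The plan is as follows. First, $N$ is reflexive of rank one, so it is $\mathscr O_{Y,o}(D)$ for some class in $\Cl(\mathscr O_{Y,o})$. Next, I use that $\Cl(Y)\to\Cl(\mathscr O_{Y,o})$ is surjective, since any Weil divisor germ at $o$ is the localization of a global one by taking closures of prime divisors of $\operatorname{Spec}\mathscr O_{Y,o}$. Hence $N\cong(\widetilde{M_{\mathscr L}})_o$ for some line bundle $\mathscr L$ on $Z$ via \eqref{eq.Isom-Cl}. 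Finally, depth of the graded module $M_{\mathscr L}$ at $\mathfrak m$ coincides with depth of its localization $(M_{\mathscr L})_{\mathfrak m}$, because local cohomology commutes with localization at $\mathfrak m$ and graded local cohomology modules supported at $\mathfrak m$ are nonzero iff their localizations are. So $M_{\mathscr L}$ is MCM at $o$, and the formula above gives that $\mathscr L$ is aCM. Apart from this reduction, the only care needed is identifying $H^0(Y\setminus\{o\},\cdot)$ with the direct sum over weights; this is routine for the $\mathbb C^\times$-bundle $\operatorname{Spec}_Z\bigoplus_q\mathscr O_Z(q)$.
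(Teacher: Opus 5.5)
Your proposal is correct and follows essentially the same route as the paper. Both arguments globalize a rank one MCM module over $\mathscr{O}_{Y,o}$ using the surjectivity of $\Cl(Y)\to\Cl(\mathscr{O}_{Y,o})$; you prove this by taking closures of prime divisors, whereas the paper cites Nagata. Both then identify rank one reflexive sheaves on $Y$ with the graded modules $\bigoplus_q H^0(Z,\mathscr L(q))$, whose Cohen--Macaulayness is equivalent to $\mathscr L$ being aCM.

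The differences are minor. You derive that last equivalence from the local cohomology formula $H^{i+1}_{\mathfrak m}(M_{\mathscr L})\cong\bigoplus_q H^i(Z,\mathscr L(q))$ instead of citing Kleiman--Landolfi. You also arrange the implications as a single cycle, whereas the paper proves the equivalence of the global condition with each of the other two.
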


\begin{proof}
The implication
\ref{item:cone-global-mcm}$\Rightarrow$\ref{item:cone-local-mcm} follows by
taking the stalk at $o$.  Conversely, since $Y$ is normal, Nagata's theorem says that the localization at $o$ induces a surjection
\[
  \Cl(Y)\longrightarrow\Cl(\Spec\mathscr{O}_{Y,o}).
\]
Thus a rank one MCM $\mathscr{O}_{Y,o}$-module extends to a
rank one reflexive sheaf $\mathscr{L}$ on $Y$ such that $\mathscr{L}$ is Cohen--Macaulay at $o$. Moreover, since $Y$ is smooth away from $o$, it follows that $\mathscr{L}$ is locally free away from $o$, and hence $\mathscr{L}$ is Cohen--Macaulay on $Y$. This proves the equivalence of
\ref{item:cone-global-mcm} and \ref{item:cone-local-mcm}.

Now we prove the equivalence between \ref{item:cone-global-mcm} and \ref{item:base-acm}. According to the isomorphism \eqref{eq.Isom-Cl}, every rank one reflexive sheaf on $Y$ is isomorphic to some $\iota_*p_Y^*\mathscr{L}$, where $\iota\colon Y\setminus \{o\}\rightarrow Y$ and $\mathscr{L}$ is a line bundle, unique up to a twist by $\mathscr{O}_Z(1)$, on $(Z,\mathscr{O}_Z(1))$. Note that $\iota_*p_Y^*\mathscr{L}$ is the sheafification of the following graded $A(Y)$-module:
\[
M(\mathscr{L})\coloneqq \bigoplus_{q\in \mathbb{Z}} H^0(Z,\mathscr{L}(q)).
\]
Then it is known that $\mathscr{L}$ is an aCM line bundle if and only if $M(\mathscr{L})$ is Cohen--Macaulay (see \cite[Proposition 2.2.4]{KleimanLandolfi1971}).
\end{proof}

\begin{example}\label{ex:bidegree}
	Let $n\geq3$, $a\geq 1$, and $b\geq a+n+2$. Assume furthermore that $a\geq 2$ if $n=3$. Let $A=\mathbb P^1\times\mathbb P^{n-1}$. For any $(a,b)\in \mathbb{Z}^2$, write $\mathscr{O}_A(a,b)=\mathscr{O}_{\mathbb{P}^1}(a)\boxtimes \mathscr{O}_{\mathbb{P}^{n-1}}(b)$. Let $Z\in |\mathscr{O}_A(a,b)|$ be a smooth hypersurface of bi-degree $(a,b)$ and assume also that $Z$ is very general if $n=3$. Let $Y$ be the cone over $(Z,\mathscr{O}_A(1,1)|_Z)$. Then $Y$ admits no rank one MCM sheaves by Proposition \ref{prop:cone-acm} and Lemma \ref{lem.Ex} below. 
\end{example}

\begin{lemma}
	\label{lem.Ex}
	The polarized projective manifold $(Z,\mathscr{O}_A(1,1)|_Z)$ in Example \ref{ex:bidegree} admits no aCM line bundles.
\end{lemma}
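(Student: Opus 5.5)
The plan is to first determine $\Pic(Z)$, then reduce modulo twists by $\mathscr O_Z(1,1)\coloneqq\mathscr O_A(1,1)|_Z$ to a one-parameter family of line bundles. For each member of that family, the aim is to exhibit a single nonvanishing intermediate cohomology group, computed from the restriction sequence on $A$ together with Künneth and Serre duality.

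\emph{Step 1: Picard group.} If $n\geq4$, then $\dim Z=n-1\geq3$ and $Z$ is an ample divisor on $A$ because $a,b\geq1$. The Grothendieck–Lefschetz theorem then gives $\Pic(Z)\cong\Pic(A)=\mathbb Z^2$, generated by $\mathscr O_Z(1,0)$ and $\mathscr O_Z(0,1)$. If $n=3$, $Z$ is a very general surface of bidegree $(a,b)$ in $\mathbb P^1\times\mathbb P^2$ with $a\geq2$ and $b\geq a+5$. Here I would invoke a Noether–Lefschetz theorem for very general members of a sufficiently ample linear system on a threefold (e.g. Moishezon / Ravindra–Srinivas) to obtain $\Pic(Z)=\mathbb Z^2$ again. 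This is where the hypotheses $a\geq2$ and very generality are used. I expect this citation or verification (checking that $H^{2,0}$ increases under the restriction, so that the infinitesimal Noether–Lefschetz argument applies) to be the main obstacle. Every line bundle $\mathscr O_Z(c,d)$ is, up to a twist by $\mathscr O_Z(1,1)$, of the form $\mathscr L_c\coloneqq\mathscr O_Z(c,0)$. Since being aCM is invariant under such twists, it suffices to show that no $\mathscr L_c$ is aCM.

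\emph{Step 2: the case $c\leq-2$.} Use the sequence
\[
0\to\mathscr O_A(c-a,-b)\to\mathscr O_A(c,0)\to\mathscr O_Z(c,0)\to0 .
\]
By Künneth, $H^1(A,\mathscr O_A(c-a,-b))=0$, since $H^0(\mathbb P^{n-1},\mathscr O(-b))=0$ and $H^1(\mathbb P^{n-1},\mathscr O(-b))=0$. On the other hand, $H^1(A,\mathscr O_A(c,0))\supset H^1(\mathbb P^1,\mathscr O(c))\otimes H^0(\mathbb P^{n-1},\mathscr O)\neq0$. Hence $H^1(Z,\mathscr L_c)\neq0$. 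Since $n-1\geq2$, the index $i=1$ lies in the range $0<i<n-1$, so $\mathscr L_c$ is not aCM.

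\emph{Step 3: the case $c\geq a-b+n$, via Serre duality.} Adjunction gives $\omega_Z=\mathscr O_Z(a-2,b-n)$. For any $q$ we have
\[
H^{n-2}(Z,\mathscr L_c(q))^\vee\cong H^1\bigl(Z,\mathscr O_Z(a-2-c-q,\,b-n-q)\bigr).
\]
Take $q=b-n$; the right-hand side becomes $H^1(Z,\mathscr L_{c'})$ with $c'=a-2-c-b+n$. When $c\geq a-b+n$ we have $c'\leq-2$, so Step 2 shows this group is nonzero. Since $n\geq3$, the index $n-2$ satisfies $0<n-2<n-1$, so $\mathscr L_c$ is again not aCM.

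\emph{Step 4: conclusion.} The hypothesis $b\geq a+n+2$ gives $a-b+n\leq-2$. Therefore the ranges $c\leq-2$ and $c\geq a-b+n$ together cover all of $\mathbb Z$, and no line bundle on $Z$ is aCM with respect to $\mathscr O_Z(1,1)$.
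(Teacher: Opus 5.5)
Your proof is correct. Steps 1 and 2 match the paper's argument. The paper normalizes to $\mathscr O_Z(0,c)$ rather than $\mathscr O_Z(c,0)$, so its case $c\geq 2$, after twisting by $\mathscr O_Z(-c,-c)$, is exactly your case $c\le -2$. For $n=3$ it cites Ravindra--Srinivas, whose hypothesis is global generation of $\omega_A\otimes\mathscr O_A(Z)=\mathscr O_A(a-2,b-3)$; this is where $a\ge 2$ enters.

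The genuine difference is the complementary range. The paper handles it with a second direct computation. It twists by $t=\max\{a,-c-n+1\}$ and shows that $H^{n-2}(Z,\mathscr O_Z(t,c+t))$ surjects onto the nonzero group $H^{n-1}(A,\mathscr O_A(t-a,c+t-b))$, because $H^{n-1}(A,\mathscr O_A(t,c+t))=0$ by K\"unneth. You instead use adjunction and Serre duality. The map $\mathscr L\mapsto\mathscr L^\vee\otimes\omega_Z$ preserves the aCM property and acts on your normalized family by $c\mapsto a-2-b+n-c$. So the second range is the reflection of the first, and no new cohomology computation is needed.

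Your route is shorter. It also makes clear that the numerical hypothesis only serves to make the ranges $c\le -2$ and $c\ge a-b+n$ cover $\mathbb Z$, so $b\ge a+n+1$ would already suffice; the paper's estimate $c+a-b\le -n$ likewise only needs this. The paper's route avoids duality and stays within restriction sequences and K\"unneth on $A$.

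One small addition: the paper's proof also records that $(Z,\mathscr O_Z(1,1))$ is projectively normal, via projective normality of $(A,\mathscr O_A(1,1))$ and Kodaira vanishing. This is not part of the lemma as stated, but Example~\ref{ex:bidegree} needs it to invoke Proposition~\ref{prop:cone-acm}.
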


\begin{proof}
	For simplicity, we write $\mathscr{O}_Z(u,v)=\mathscr{O}_A(u,v)|_Z$ for any $(u,v)\in \mathbb{Z}^2$. Note that $(A,\mathscr{O}_A(1,1))$ is projectively normal and the restriction $H^0(A,\mathscr{O}_A(q,q))\rightarrow H^0(Z,\mathscr{O}_Z(q,q))$ is surjective for any $q\in \mathbb{Z}$ by Kodaira's vanishing theorem, so $(Z,\mathscr{O}_Z(1,1))$ is also projectively normal. 
	
	Moreover, we also notice that the restriction map $\Pic(A)\rightarrow\Pic(Z)$
	is an isomorphism.  For $n\geq4$ this follows from the
	Grothendieck--Lefschetz theorem.  For $n=3$ it follows from the
	Noether--Lefschetz theorem, in the form of
	\cite[Theorem 1]{RavindraSrinivas2009}, since the adjoint line bundle
	\[
	\omega_{\mathbb{P}^1\times \mathbb{P}^2}\otimes\bigl(\mathscr{O}_{\mathbb{P}^1}(a)\boxtimes \mathscr{O}_{\mathbb{P}^2}(b)\bigr)
	\cong \mathscr{O}_{\mathbb{P}^1}(a-2)\boxtimes \mathscr{O}_{\mathbb{P}^2}(b-3)
	\]
	is globally generated for $a\geq 2$ and $b\geq a+4>3$. On the other hand, note that twisting by $\mathscr{O}_Z(1,1)$ preserves the aCM property. Thus for any line bundle $\mathscr{L}\in \Pic(Z)$,  after a twist by suitable $\mathscr{O}_{Z}(q,q)$, we may assume that $\mathscr{L}\cong \mathscr L_c\coloneqq\mathscr{O}_Z(0,c)$ for some $c\in \mathbb{Z}$. 
	
	Firstly, we assume that $c\geq2$ and set $t=-c$.  Then
	$\mathscr L_c(t)=\mathscr{O}_Z(-c,0)$.  Consider the following exact sequence:
	\[
	0\longrightarrow\mathscr{O}_A(-a-c,-b)
	\longrightarrow\mathscr{O}_A(-c,0)
	\longrightarrow\mathscr{O}_Z(-c,0)
	\longrightarrow0.
	\]
	By the K\"unneth formula, we have 
	\[	
	H^1(A,\mathscr{O}_A(-c,0))\neq 0\quad \text{and}\quad
	H^1(A,\mathscr{O}_A(-a-c,-b))
	=H^2(A,\mathscr{O}_A(-a-c,-b))=0.
	\]
	This implies $H^1(Z,\mathscr L_c(-c))\neq0$ and hence $\mathscr L_c$ is not aCM.
	
	Next we assume that $c\leq1$ and set $t\coloneqq\max\{a,-c-n+1\}$. Consider the following exact sequence:
	\[
	0\longrightarrow\mathscr{O}_A(t-a,c+t-b)
	\longrightarrow\mathscr{O}_A(t,c+t)
	\longrightarrow\mathscr{O}_Z(t,c+t)
	\longrightarrow0.
	\]
	We claim that
	\[
	H^{n-1}\!\left(A,\mathscr{O}_A(t-a,c+t-b)\right)\neq0.
	\]
	Indeed, if $t=a$, then
	\[
	c+t-b=c+a-b\leq c-n-2\leq-n-1,
	\]
	while, if $t=-c-n+1$, then
	\[
	c+t-b=1-n-b\leq-n.
	\]
	In particular, we always have $H^{n-1}(\mathbb{P}^{n-1},\mathscr{O}_{\mathbb{P}^{n-1}}(c+t-b))\not=0$, and the claim follows from the K\"unneth formula as $t-a\geq 0$.
	
	On the other hand, $t\geq0$ and $c+t\geq1-n$, so
	\[
	H^{n-2}(A,\mathscr{O}_A(t,c+t))
	=H^{n-1}(A,\mathscr{O}_A(t,c+t))=0.
	\]
	This yields $H^{n-2}(Z,\mathscr L_c(t))\neq0$ and thus $\mathscr L_c$ is not aCM in this case either.
\end{proof}

\begin{example}[{\cite[Theorem 3.4]{Ma2019}}]\label{ex:ma}
Let $Z\subset\mathbb P^N$ be a general smooth complete intersection of
multi-degree $(d_1,\ldots,d_{N-n+2})$ such that $\sum d_i > N+1$ and $n\geq 5$. Then the cone $Y$ over the polarized projective manifold 
$(Z\times \mathbb{P}^1, \mathscr{O}_Z(1)\boxtimes \mathscr{O}_{\mathbb{P}^1}(1))$ admits no rank one MCM sheaf. 
\end{example}

\section{Projective examples}
\label{sec.global-counterexamples}

In this section, we introduce a general framework to compactify the local counterexamples to global ones. In particular, applying it to the local counterexamples from the last section yields projective counterexamples to Chen--Ng\^o's conjecture, therefore proving Theorem \ref{thm:Chen-Ngo-false}.

\subsection{Local setup}

Throughout this section, let $(Y,o_Y)$ be a pointed normal affine variety of dimension $n\geq 3$ such that
\[
\tag{$\clubsuit$}
\widehat{\mathscr O}_{Y,o_Y}
\text{ admits no rank one MCM modules.}
\]
Let $p\colon Y\to\mathbb A^n$ be a Noether normalization of degree $r$, with $p(o_Y)=0$. We use the construction and notation of \S\,\ref{ss.Local-Models}. Set $W=Y\times\mathbb A^m$ and $q=p\times\operatorname{Id}_{\mathbb A^m}$. We then have a commutative diagram:
\begin{equation}
	\begin{tikzcd}[row sep=large,column sep=large]
		(y,z)\in Y\times \mathbb{A}^m = W
		\arrow[r,"f"]
		\arrow[d,"{\iota_Y\times\operatorname{Id}_{\mathbb A^m}}" left]
		\arrow[dr,"q"]
		& T^*\mathbb A^{n+m}=\mathbb A^{2n+2m}
		\ni(x,z,x',z')
		\arrow[d,"\operatorname{pr}_{x,z}"] \\
		(x,u,z)\in\mathbb A^{n+2m}
		\arrow[r,"\operatorname{pr}_{x,z}"]
		& \mathbb A^{n+m}\ni(x,z).
	\end{tikzcd}
\end{equation}
Recall from Remark \ref{rem.Geom-f} that $f$ is defined by the nonzero section $\sigma_F\in H^0\bigl(W,q^*\Omega^1_{\mathbb A^{n+m}}\bigr)$ associated with
\[
F=B\sum_{j=1}^m g_j z_j\in A(W).
\]
Here $B\in A(\mathbb A^n)$ is a square-free equation of the branch
divisor of $p$, and $1,g_1,\ldots,g_m$ generate $A(Y)$ as an $A(\mathbb A^n)$-algebra. By Proposition \ref{prop:f-codim-one}, the morphism $f$ is an embedding in codimension one. 

In the following, we aim to construct a finite cover $q_M\colon M\to\mathbb A^{n+m+1}$ and a rational section $\sigma_M$ of $q_M^*\Omega^1_{\mathbb A^{n+m+1}}$, starting with $q\colon W\rightarrow \mathbb{A}^{n+m}$ and $\sigma_F$,  satisfying the following properties:
\begin{itemize}
	\item The rational section $\sigma_M$ is regular at a point	$\mathfrak{o}\in M$ such that $\widehat{\mathscr O}_{M,\mathfrak{o}}$ admits no rank one MCM modules.
	
	\item The differential $(dq_M)(\sigma_M)$ is a regular $1$-form on $M$ which  extends regularly to the smooth locus
	of any normal projective birational model of $M$.	
\end{itemize}
Set $N=n+m+1$ and define $M=W\times E=Y\times \mathbb{A}^{m}\times E$, where $E$ is the affine elliptic curve
\[
E=\{v^2=u^3+u+1\}\subset\mathbb A^2\ni (u,v).
\]
We define 
\[
	q_M\colon M \longrightarrow\mathbb A^N=\mathbb{A}^{n+m+1}\ni (x,z,s),\quad (w,u,v)\longmapsto (x,z,s)\coloneqq \bigl(q(w),u+F(w)\bigr).
\]
Then $q_M$ is a finite morphism of degree $2r$.

\begin{lemma}
	\label{lem:Non-Exi-MCMmodules}
	For every point $o_M\in\{o_Y\}\times\mathbb A^m\times E$, the completion $\widehat{\mathscr O}_{M,o_M}$ admits no rank one MCM modules.
\end{lemma}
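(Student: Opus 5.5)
The plan is to reduce to the hypothesis $(\clubsuit)$ on $\widehat{\mathscr O}_{Y,o_Y}$. The point is that $M=Y\times(\mathbb A^m\times E)$ is a product of $Y$ with the smooth variety $\mathbb A^m\times E$ of dimension $m+1$. Write $o_M=(o_Y,z_0,e_0)$. Because $\mathbb A^m\times E$ is smooth at $(z_0,e_0)$, the completion satisfies
\[
\widehat{\mathscr O}_{M,o_M}\cong \widehat{\mathscr O}_{Y,o_Y}[[t_1,\ldots,t_{m+1}]].
\]
This identification comes from completing the tensor product $\mathscr O_Y\otimes_{\mathbb C}\mathscr O_{\mathbb A^m\times E}$ at a point whose second factor has completed local ring $\mathbb C[[t_1,\ldots,t_{m+1}]]$. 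By induction on the number of variables, it therefore suffices to prove the following claim. If $R$ is a complete local normal domain with no rank one MCM modules, then $S=R[[t]]$ has none either. Note that $R$ is normal, since excellence of $\mathscr O_{Y,o_Y}$ makes $\widehat{\mathscr O}_{Y,o_Y}$ normal.

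To prove the claim, let $L$ be a rank one MCM $S$-module. Then $L$ is reflexive, since MCM over a normal ring implies $S_2$. Consider $\bar L=L/tL$, which is an MCM $R$-module because $t$ is $L$-regular, and $R=S/tS$. The key issue is whether $\bar L$ has rank one over $R$. The rank is unchanged under specialization: $L$ is generically free of rank one and $t$ is a nonzerodivisor on both $L$ and $S$, so I would use the multiplicity/Hilbert-function additivity. Concretely, $e(\mathfrak m_S, L)=\operatorname{rank}(L)\,e(\mathfrak m_S,S)$, and reducing modulo the superficial element $t$ preserves multiplicities, giving $\operatorname{rank}_R \bar L=1$. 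A cleaner alternative uses the $\Cl$ groups. Since $R$ is normal and complete, a theorem of Danilov/Lipman says $\Cl(R)\to\Cl(R[[t]])$ is an isomorphism when $R$ has rational singularities. That hypothesis is not available here, so I prefer the rank argument. With it, $\bar L$ is a rank one MCM $R$-module, contradicting the assumption on $R$.

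The main obstacle is making the rank-preservation step rigorous. One must check that $\bar L$ is torsion-free of rank one rather than, for instance, picking up extra rank at the generic point of $\Spec R$. I would do this by localizing at the prime $tS$: $S_{(t)}$ is a DVR, $L_{(t)}$ is a torsion-free rank one module over it, hence free of rank one. So $\bar L\otimes_R \operatorname{Frac}(R)=L_{(t)}/tL_{(t)}$ is one-dimensional over the residue field $\operatorname{Frac}(R)$. This gives rank one directly, and MCM over $R$ follows from depth dropping by exactly one along a regular element. Finally, the product identification of the completion is routine and I would state it with a brief justification.
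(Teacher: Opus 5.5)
Your proposal is correct and is essentially the paper's argument: identify $\widehat{\mathscr O}_{M,o_M}$ with $\widehat{\mathscr O}_{Y,o_Y}[[t_1,\ldots,t_{m+1}]]$, cut a rank one MCM module down by the regular sequence $t_1,\ldots,t_{m+1}$, and obtain rank one from the regularity of the ambient ring at the generic point of $\{t=0\}$. The paper phrases that last step as $\iota(Y'_{\operatorname{reg}})\subset M'_{\operatorname{reg}}$ and treats all variables at once, whereas you localize at the height one prime $tS$ one variable at a time; the multiplicity and class-group detours are unnecessary, and the localization argument you settle on is the right way to make the rank step rigorous.
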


\begin{proof}
	Since $\mathbb A^m\times E$ is smooth of dimension $m+1$, there exists a (non-canonical) isomorphism
	\[
	\widehat{\mathscr O}_{M,o_M}
	\cong
	\widehat{\mathscr{O}}_{Y,o_Y}[[t_1,\ldots,t_{m+1}]].
	\]
	Let $\iota\colon Y'\coloneqq \Spec \widehat{\mathscr{O}}_{Y,o_Y}\rightarrow M'\coloneqq \Spec \widehat{\mathscr{O}}_{M,o_M}$ be the closed immersion induced by the natural quotient
	\[
	\widehat{\mathscr{O}}_{M,o_M}\longrightarrow \widehat{\mathscr{O}}_{M,o_M}/\langle t_1,\dots,t_{m+1}\rangle \cong \widehat{\mathscr{O}}_{Y,o_Y}.
	\]
	Let $I$ be a rank one MCM module over $\widehat{\mathscr{O}}_{M,o_M}$. 	Then $t_1,\ldots,t_{m+1}$ is an $I$-regular sequence, so the quotient 
	\[
	\overline{I}\coloneqq I/\langle t_1,\ldots,t_{m+1}\rangle I
	\]
	is MCM over $\widehat{\mathscr{O}}_{Y,o_Y}$. On the other hand, as $\iota(Y'_{\operatorname{reg}})\subset M'_{\operatorname{reg}}$, the quotient $\overline{I}$ also has rank one, which contradicts our assumption \((\clubsuit)\).
\end{proof}

Let $\overline E$ be the smooth projective completion of \(E\). Then the
differential $du/v$ extends to a nowhere-vanishing regular
differential on $\overline E$, denoted by $\xi\in H^0(\overline{E},\Omega_{\overline{E}}^1)$. Indeed, as $v^2=u^3+u+1$, we have
\[
\frac{du}{v} = \frac{2dv}{3u^2+1},
\]
so $du/v$ is regular around $v=0$. Moreover, at the infinity point $\{\infty\}=\overline{E}\setminus E$, a straightforward computation shows that $\ord_{\infty}(du)=\ord_{\infty}(v)=-3$, which then implies that $du/v$ is also regular around $\infty$. Let $\xi_M=\operatorname{pr}_E^*\xi\in H^0(M,\Omega^1_M)$. On the other hand, note that the equality $u=s-F\in A(M)$ defines a regular section
\[
	\sigma_u
	\coloneqq ds-\sigma_F
	=-\sum_{j=1}^n D_j(F)\,dx_j
	-\sum_{j=1}^m B g_j\,dz_j+ds
	\in H^0\bigl(M,q_M^*\Omega^1_{\mathbb A^N}\bigr).
\]
Then $\sigma_{M}\coloneqq \sigma_u/v$ is a rational section of $q_M^*\Omega_{\mathbb{A}^N}^1$, which is regular over the open subset 
\[
M_v\coloneqq W\times E_v,
\qquad E_v\coloneqq E\cap\{v\neq0\}.
\]

\begin{lemma}
	\label{lem.rational-1-form}
	We have $(dq_M)(\sigma_M)=\xi_M$. Moreover, for any normal projective birational model $M'$ of $M$, the $1$-form $\xi_M$ on $M$ extends uniquely to a regular $1$-form $\xi_{M'}$ on	$M'_{\mathrm{reg}}$.
\end{lemma}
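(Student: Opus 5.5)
The first claim is a direct computation with the definition of $dq_M$. Recall that for a rational section $\sigma=\sum_i a_i\,dx_i+\sum_j b_j\,dz_j+c\,ds$ of $q_M^*\Omega^1_{\mathbb A^N}$, the form $(dq_M)(\sigma)$ is obtained by replacing each coordinate differential with its pullback under $q_M$. These pullbacks are
\[
q_M^*dx_i=d(p^*x_i),\qquad q_M^*dz_j=dz_j,\qquad q_M^*ds=du+dF,
\]
where $F$ is regarded as a function on $M$ via $\operatorname{pr}_W$. The computation proceeds in two steps.

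\begin{itemize}
\item First, by Remark~\ref{rem.Geom-f}, we have $(dq)(\sigma_F)=dF$. Here $D_i(F)$ acts as the $x_i$-partial derivative and $Bg_j=\partial F/\partial z_j$, since $F=B\sum_j g_jz_j$. The identity holds first on the étale locus $B\neq0$, where the derivations $D_i$ are honest partial derivatives, and then everywhere, because both sides are regular.
\item Second, combining this with $q_M^*ds=du+dF$ gives
\[
(dq_M)(\sigma_u)=du+dF-dF=du.
\]
Dividing by $v$ yields $(dq_M)(\sigma_M)=du/v=\operatorname{pr}_E^*\xi=\xi_M$ on $M_v$. Since $\xi_M$ is regular on all of $M$, the identity holds as rational forms on $M$.
\end{itemize}

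For the extension statement, the plan is to use that $\xi_M$ is pulled back from the elliptic curve. Let $M'$ be a normal projective birational model of $M$. The projection $\operatorname{pr}_E\colon M\to E\subset\overline E$ defines a rational map $\phi\colon M'\dashrightarrow\overline E$. The key input is that rational maps from a normal variety to an abelian variety are defined in codimension one, and in fact are defined wherever the source is smooth (Weil's extension theorem: a rational map from a smooth variety to an abelian variety is a morphism). Applying this on $M'_{\mathrm{reg}}$, the map $\phi$ restricts to a morphism $M'_{\mathrm{reg}}\to\overline E$. We then set $\xi_{M'}\coloneqq\phi^*\xi$. This form is regular on $M'_{\mathrm{reg}}$ because $\xi$ is regular on $\overline E$, and it agrees with $\xi_M$ on the common open subset where $M$ and $M'$ are isomorphic. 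Uniqueness is immediate, since $\Omega^1_{M'_{\mathrm{reg}}}$ is torsion-free and two regular forms agreeing on a dense open subset coincide.

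The main obstacle is the extension step, namely invoking the correct version of Weil's theorem. If one wants to avoid it, an alternative is to argue only in codimension one. On a normal variety, a rational map to a projective curve extends over every codimension one point, because the local rings there are DVRs and $\overline E$ is proper. The form $\phi^*\xi$ is then regular at all codimension one points of $M'_{\mathrm{reg}}$. Since $\Omega^1$ is locally free on $M'_{\mathrm{reg}}$, Hartogs' theorem then gives regularity on all of $M'_{\mathrm{reg}}$. This second route is elementary, and it is the one I would write.
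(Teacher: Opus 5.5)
Your proposal is correct and follows the paper's proof. The identity $(dq_M)(\sigma_M)=\xi_M$ comes from $q_M^*ds=du+dF$ together with $(dq)(\sigma_F)=dF$, which is the content of the paper's one-line appeal to $u=s-F$. The extension statement is obtained exactly as in the paper: extend $M'\dashrightarrow\overline E$ over the codimension-one points of the normal variety $M'$, pull back $\xi$, and apply Hartogs on $M'_{\mathrm{reg}}$. Your alternative via Weil's extension theorem is also valid but not needed.
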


\begin{proof}
	The equality $(dq_M)(\sigma_M)=\xi_M$ follows from the equality $u=s-F$. For the second statement, since $M'$ and $\overline{E}$ are projective, the rational map $\varphi\colon M'\dashrightarrow \overline{E}$ induced by $M\rightarrow E$ is regular in codimension one. In particular, there exists an open subset $U\subset M'$ such that $\varphi|_U$ is regular and $\operatorname{codim}(M'\setminus U)\geq 2$. Then $\xi_M=\pr^*_{E}(\xi|_E)$ extends uniquely to the regular $1$-form $\varphi^*\xi\in H^0(U,\Omega_U^1)$, and then $\varphi^*\xi$ extends to a regular $1$-form $\xi_{M'}\in H^0(M'_{\operatorname{reg}},\Omega^1_{M'_{\operatorname{reg}}})$ by Hartogs' theorem as $\operatorname{codim}(M'\setminus U)\geq 2$.
\end{proof}

We note that the rational section \(\sigma_M\) defines a rational map
\[
\begin{tikzcd}[row sep=large, column sep=large]
	f_M\colon M=W\times E \arrow[r, dashed, "{\sigma_M}"]
	    & q_M^*T^*\mathbb{A}^N \arrow[r]
	        &  T^*\mathbb A^N=\mathbb A^{2N}\ni (x,z,s,x',z',s'),
\end{tikzcd}
\]
given by
\[
	f_M(w,u,v)=
	\biggl(
	q(w),u+F(w),
	-\frac{D_1(F)}v,\ldots,-\frac{D_n(F)}v,
	-\frac{Bg_1}v,\ldots,-\frac{Bg_m}v,\frac1v
	\biggr).
\]

\begin{lemma}
	\label{lem.f'-embedding-codim-one}
	The morphism $f_{M_v}:=f_M|_{M_v}\colon M_v\to\mathbb A^{2N}$ is an embedding in codimension one.
\end{lemma}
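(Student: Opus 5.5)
The plan is to reduce the statement to Proposition~\ref{prop:f-codim-one} by recovering the coordinates of $M_v$ from the coordinates of $f_M$. On $M_v$ we have $v=1/s'$, and $1/v$ is a regular invertible function there. Thus the last coordinate $s'$ of $f_M$ determines $v$, and $v$ is a closed-embedding coordinate on $E_v$ together with $u$. We then recover $u$ from $s$ and $F(w)$; more precisely, the data are recovered as follows.

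The map $f_M$ factors through the isomorphism
\[
\Phi\colon \mathbb A^{2N}\cap\{s'\neq0\}\longrightarrow \mathbb A^{2N}\cap\{s'\neq0\},\qquad (x,z,s,x',z',s')\longmapsto (x,z,s,-x'/s',-z'/s',s'),
\]
which is an automorphism of the open set $\{s'\neq 0\}$. After composing with $\Phi$, the map $f_M|_{M_v}$ becomes
\[
(w,u,v)\longmapsto \bigl(f(w),\,u+F(w),\,1/v\bigr),
\]
with $f$ the morphism \eqref{eq:morphism-f}. Next apply the automorphism $(a,s,s')\mapsto(a,s-\Theta(a),s')$ of $\mathbb A^{2n+2m}\times\mathbb A^2$. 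Here $\Theta$ is a polynomial on $\mathbb A^{2n+2m}$ satisfying $\Theta\circ f=F$; such a $\Theta$ exists because $F=B\sum_j g_jz_j=\sum_j z_j\cdot (Bg_j)=\sum_j z_j z'_j\circ f$, so we may take $\Theta=\sum_j z_jz'_j$. Under these identifications, $f_{M_v}$ becomes
\[
f\times\iota\colon W\times E_v\longrightarrow \mathbb A^{2n+2m}\times\mathbb A^2,\qquad \iota(u,v)=(u,1/v).
\]
The map $\iota$ is a closed embedding of $E_v$ into $\mathbb A^1\times(\mathbb A^1\setminus\{0\})$.

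Now take $W^\circ\subset W$ as in Proposition~\ref{prop:f-codim-one}. Then $f\times\iota$ restricted to $W^\circ\times E_v$ is a product of an embedding (locally closed immersion) with a closed embedding, hence an embedding. Its complement $(W\setminus W^\circ)\times E_v$ has codimension at least two in $M_v$. The embedding property is preserved by composing with the automorphisms above, which are defined on the open subset $\{s'\neq 0\}$ containing the image, so $f_{M_v}$ is an embedding in codimension one.

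The only point requiring care is the identity $F=\Theta\circ f$ and the bookkeeping of signs, since the actual coordinates are $-Bg_j/v$ and $-D_i(F)/v$. Concretely, $s-\sum_j z_j\cdot(-z'_j/s')\cdot$ must be interpreted with the correct sign. Because $z'_j=-Bg_j/v$ and $s'=1/v$, we get $-z'_j/s'=Bg_j$, and so $u=s-\sum_j z_j(-z'_j/s')$ is a regular function on the target open set. I expect no genuine obstacle beyond this.
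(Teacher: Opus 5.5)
Your proof is correct and follows the paper's argument. Like the paper, you restrict to $W^\circ\times E_v$ with $W^\circ$ from Proposition~\ref{prop:f-codim-one}, observe that the image lies in $\{s'\neq 0\}$, and recover $v=1/s'$, $f(w)=(x,z,-x'/s',-z'/s')$ and $u=s-F(w)$; your polynomial $\Theta=\sum_j z_jz'_j$ with $\Theta\circ f=F$ just makes explicit the paper's implicit step that $F(w)$ is a function of the target coordinates (the dangling ``$\cdot$'' in your last paragraph is only a typo).
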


\begin{proof}
	By Proposition \ref{prop:f-codim-one}, we can choose an open subset $W^\circ\subset W$ such that $\operatorname{codim}(W\setminus W^{\circ})\geq 2$ and $f|_{W^{\circ}}$ is an embedding. On $W^\circ\times E_v\subset M_v$, we have that $f_M(W^{\circ}\times E_v)$ is contained in the open subset $\{s'\not=0\}$ of $T^*\mathbb{A}^N\ni(x,z,s,x',z',s')$. In particular, we have
	\[
	v=\frac1{s'},\qquad
	f(w)=\left(x,z,-\frac{x'}{s'},-\frac{z'}{s'}\right),
	\qquad
	u=s-F(w).
	\]
	It follows that $f_M$ is an	embedding on $W^\circ\times E_v$, whose complement in $M_v=W\times E_v$ has codimension at least two.
\end{proof}

\begin{remark}
	Lemma \ref{lem.f'-embedding-codim-one} explains why we define $s=u+F$, instead of $s=u$. Indeed, the codimension one embedding property of the map $f_M$ (induced by $\xi_M$) would be lost in the latter case. Furthermore, this is why we need to choose $\sigma_F$, instead of an arbitrary section of $q^* T^*\mathbb{A}^{n+m}$, to make $f\colon W\rightarrow T^*\mathbb{A}^{n+m}$ into an \emph{exact Lagrangian embedding} in codimension one. In particular, the function $F$ can be replaced by any function satisfying the conclusion of Proposition \ref{prop:f-codim-one}.
\end{remark}

\subsection{From local to global}
\label{ss.Local-to-global}

Regard $\mathbb A^N$ as the standard affine open subset of $\mathbb P^N$. Let $\overline M$ be the normalization of \(\mathbb P^N\) in \(K(M)\), with induced finite morphism $q_{\overline M}\colon\overline M\longrightarrow\mathbb P^N$.
Since $M$ is normal and finite over \(\mathbb A^N\), it identifies
with \(q_{\overline M}^{-1}(\mathbb A^N)\). Let $\sigma_{\overline M}$ be the rational section of $q_{\overline M}^*\Omega^1_{\mathbb P^N}$ induced by $\sigma_M$. Then Lemma \ref{lem.rational-1-form} shows that $\xi_{\overline M}\coloneqq 
(dq_{\overline M})(\sigma_{\overline M})$ is regular on $\overline M_{\mathrm{reg}}$. Let $\mathfrak H$ be the pole divisor of $\sigma_{\overline M}$, and set
\[
\mathfrak D=
q_{\overline M}\bigl(\operatorname{Supp}\mathfrak H\bigr)
\cup(\mathbb P^N\setminus\mathbb A^N)
\]
with the reduced structure. Set $\mathfrak{D}_{\overline{M}}=q_{\overline{M}}^{-1}(\mathfrak{D})$ and write
\[
L\coloneqq \{0\}\times\mathbb A^m\times\mathbb A^1
\subset
\mathbb A^n\times\mathbb A^m\times\mathbb A^1
=\mathbb A^N.
\]
Then we have the following commutative diagram:
\[ 
\begin{tikzcd}[row sep=large, column sep=large]
	\{o_Y\}\times \mathbb{A}^m\times E \arrow[r,hook] \arrow[d,"{q_M}" left]
	& M \arrow[r,hook] \arrow[d,"{q_M}" left]
	&  \overline{M} \arrow[d,"{q_{\overline{M}}}"] 
	& \mathfrak{D}_{\overline{M}}\arrow[l,hook'] \supset \mathfrak{H}\arrow[d,"{q_{\overline{M}}}"]\\
	L \arrow[r,hook]
	& \mathbb{A}^N \arrow[r,hook]
	&  \mathbb{P}^N
	& \mathfrak{D} \arrow[l,hook'].
\end{tikzcd}
\]

\begin{lemma}
	\label{lem:Poles-Obs}
	The affine subspace $L$ of $\mathbb{A}^N$ is not contained in $\mathfrak D$.
\end{lemma}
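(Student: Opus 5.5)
The plan is to control $\mathfrak D\cap\mathbb A^N$ directly. Since $L$ is a nonempty closed subset of $\mathbb A^N$, it is certainly not contained in the hyperplane at infinity $\mathbb P^N\setminus\mathbb A^N$. As $L$ is irreducible, it therefore suffices to exhibit one point of $L$ that does not lie in $q_{\overline M}(\operatorname{Supp}\mathfrak H)$.

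\textbf{Step 1: locate the affine part of the poles.} Since $M=q_{\overline M}^{-1}(\mathbb A^N)$, we have
\[
q_{\overline M}(\operatorname{Supp}\mathfrak H)\cap\mathbb A^N=q_M(\operatorname{Supp}\mathfrak H\cap M).
\]
The section $\sigma_M=\sigma_u/v$ is regular on $M_v=W\times E_v$, because $\sigma_u$ is a regular section of $q_M^*\Omega^1_{\mathbb A^N}$. Hence
\[
\operatorname{Supp}\mathfrak H\cap M\subset W\times\{v=0\}.
\]
It may be strictly smaller, for instance where $\sigma_u$ also vanishes, but only the inclusion is needed. Consequently it is enough to find a point $(0,z,s)\in L$ none of whose $q_M$-preimages has $v=0$.

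\textbf{Step 2: compute the fibres over $L$.} Fix $z\in\mathbb A^m$. The fibre $q^{-1}(0,z)\subset W$ is finite, say $\{w_1,\dots,w_\ell\}$ with $\ell\le r$. A point $(w,u,v)\in M$ lies over $(0,z,s)$ exactly when $w=w_i$ for some $i$, $u=s-F(w_i)$, and $v^2=u^3+u+1$.

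\textbf{Step 3: choose $s$ to avoid $v=0$.} Such a preimage has $v=0$ only if $s-F(w_i)$ is one of the three roots $\rho_1,\rho_2,\rho_3$ of $u^3+u+1$. We therefore choose
\[
s\notin\{F(w_i)+\rho_k\mid 1\le i\le\ell,\ 1\le k\le 3\}.
\]
This excludes only finitely many values of $s$. For such $s$, every preimage of $(0,z,s)$ lies in $M_v$, where $\sigma_M$ is regular. Hence $(0,z,s)\notin\mathfrak D$, and in fact a dense open subset of $L$ avoids $\mathfrak D$.

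There is no serious obstacle. The only point needing care is Step 1: the poles of $\sigma_{\overline M}$ lying outside $M$ map into $\mathbb P^N\setminus\mathbb A^N$, so they cannot meet $L$. This holds because $M$ is the full preimage of $\mathbb A^N$, which is why the identification $M=q_{\overline M}^{-1}(\mathbb A^N)$ is used.
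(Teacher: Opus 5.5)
Your proof is correct and is essentially the paper's argument: both rest on the fact that the poles of $\sigma_M$ inside $M=q_{\overline M}^{-1}(\mathbb A^N)$ lie in $\{v=0\}$, and that for fixed $z$ only finitely many values $s=u+F(w)$ arise from points with $w\in p^{-1}(0)\times\{z\}$ and $v=0$. The paper phrases this as quasi-finiteness of $\operatorname{pr}_z\colon\mathfrak D\cap L\to\mathbb A^m$ followed by the dimension count $\dim(\mathfrak D\cap L)\leq m<\dim L$. You instead pick an explicit $s$ outside the finite exceptional set, which amounts to the same thing.
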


\begin{proof}
	Over the open subset $M$, the pole locus of $\sigma_M=\sigma_u/v$ is contained in
	the closed subset $\{v=0\}$; that is,
	\[
	\operatorname{Supp}(\mathfrak H)\cap M \subset \{v=0\}.
	\]
	This implies
	\[
	\mathfrak D\cap L
	\subset
	q_M\bigl(\{p(y)=0,\ v=0\}\bigr).
	\]
	Consider the following projection 
	\[
	\pr_z\colon \mathfrak{D}\cap L \longrightarrow \mathbb{A}^m,\quad (0,z,s)\longmapsto z.
	\]
	We claim that $\pr_z$ is quasi-finite: Indeed, given $z$, since $p\colon Y\rightarrow \mathbb{A}^n$ is finite, there are only finitely many $y\in p^{-1}(0)$ and finitely many points of $E$ with $v=0$. Therefore $s=u+F(y,z)$ has only finitely many possible values. Hence $\dim(\mathfrak D\cap L)\leq m<\dim L$, proving the lemma.
\end{proof}

Choose a log resolution $\nu\colon\widetilde{\mathbb P}\to\mathbb P^N$ of $(\mathbb P^N,\mathfrak D)$, which is an isomorphism over $\mathbb P^N\setminus\mathfrak D$. Then the preimage $\mathfrak D_{\widetilde{\mathbb P}}
\coloneqq \nu^{-1}(\mathfrak D)$ is a simple normal crossing divisor. Let  $\widetilde M$ be the normalization of $\widetilde{\mathbb P}$ in $K(\overline M)$ with $\widetilde{\nu}\colon \widetilde{M}\rightarrow \overline{M}$.  Set $\mathfrak D_{\widetilde M}\coloneqq q_{\widetilde M}^{-1}(\mathfrak D_{\widetilde{\mathbb P}})$.
Pullback followed by the differential of \(\nu\) defines a rational section
\[
\sigma_{\widetilde M}
\coloneqq 
q_{\widetilde M}^*(d\nu)
\bigl(\widetilde\nu^*\sigma_{\overline M}\bigr)
\]
of $q_{\widetilde{M}}^*\Omega_{\widetilde{\mathbb{P}}}^1$ and its pole divisor is supported on
\(\mathfrak D_{\widetilde M}\). Moreover, the differential 
\[
\xi_{\widetilde{M}}\coloneqq (dq_{\widetilde M})(\sigma_{\widetilde M})
\]
is regular on $\widetilde M_{\mathrm{reg}}$ by Lemma \ref{lem.rational-1-form}. By Lemma \ref{lem:Poles-Obs}, choose a point $o\in L\setminus\mathfrak D$ and an open neighborhood $U\subset\mathbb P^N\setminus\mathfrak D\subset \mathbb{A}^N$
of $o$. Then we can identify $U$ with $\nu^{-1}(U)$. Set $U_M=q_M^{-1}(U)\subset M$. Then we can identify $U_M$ with an open subset of $\overline{M}$ and $\widetilde{M}$, respectively. Moreover, we can choose a point 
\[
\mathfrak{o}\in \{o_Y\}\times \mathbb{A}^m\times E \subset U_M
\]
such that $q_M(\mathfrak{o})=o$. In summary, we have the following commutative diagram:
\[
\begin{tikzcd}[row sep=large,column sep=large]
	\widetilde{M} \arrow[d,"{q_{\widetilde{M}}}" left] \arrow[rrrr,bend left,"{\widetilde{\nu}}"]
	   & \mathfrak{o}\in U_M\subset \widetilde M \setminus \mathfrak{D}_{\widetilde{M}}
	\arrow[r,"\widetilde\nu", "{\cong}"'] \arrow[l,hook']
	\arrow[d,"q_{\widetilde M}" left]
	& \mathfrak{o}\in U_M\subset \overline M \setminus \mathfrak{D}_{\overline{M}}
	\arrow[d,"q_{\overline M}"] \arrow[r,hook]
	& M \arrow[d,"{q_M}"] \arrow[r,hook]
	    &  \overline{M} \arrow[d,"{q_{\overline{M}}}"]\\
	\widetilde{\mathbb{P}} \arrow[rrrr,bend right, "{\nu}"]
	    & o\in U \subset \widetilde{\mathbb P} \setminus \mathfrak{D}_{\widetilde{\mathbb{P}}}
	\arrow[r,"\nu", "{\cong}"'] \arrow[l,hook']
	& o\in U \subset \mathbb P^N \setminus \mathfrak{D} \arrow[r,hook]
	    &  \mathbb{A}^N \arrow[r,hook]
	        & \mathbb{P}^N
\end{tikzcd}
\]

We now apply Kawamata's covering trick.

\begin{lemma}
	\label{lem:Kawamata-Cover}
	After shrinking $U$ around $o$, there exists a smooth finite
	cover $\tau\colon X\longrightarrow\widetilde{\mathbb P}$ such that:
	\begin{enumerate}
		\item the branch locus of $\tau$ is disjoint from $U$;
		
		\item if $\mathfrak{D}'_{\widetilde{\mathbb{P}}}$ is a component of $\mathfrak D_{\widetilde{\mathbb P}}$, $\mathfrak{D}'_X\subset X$ is	a prime divisor over $\mathfrak{D}'_{\widetilde{\mathbb{P}}}$, and $\mathfrak{D}'_{\widetilde M}\subset\widetilde M$ is a prime divisor over
		$\mathfrak{D}'_{\widetilde{\mathbb{P}}}$, then
		\[
		e\biggl(\mathfrak{D}'_{\widetilde M}\big/\mathfrak{D}'_{\widetilde{\mathbb{P}}}\biggr)
		\biggm| e\biggl( \mathfrak{D}'_X\big/\mathfrak{D}'_{\widetilde{\mathbb{P}}}\biggr).
		\]
	\end{enumerate}
\end{lemma}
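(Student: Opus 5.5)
We apply Kawamata's covering construction to the snc divisor $\mathfrak D_{\widetilde{\mathbb P}}=\sum_{k=1}^{\ell}\mathfrak D_k$ on the smooth projective variety $\widetilde{\mathbb P}$, with ramification index prescribed componentwise. For each component $\mathfrak D_k$, let $e_k$ be the least common multiple of the ramification indices $e(\mathfrak D'_{\widetilde M}/\mathfrak D_k)$ over all prime divisors $\mathfrak D'_{\widetilde M}\subset\widetilde M$ lying over $\mathfrak D_k$. This is finite because $q_{\widetilde M}$ is finite, so only finitely many such divisors occur. Set $m_k\coloneqq e_k$, or any positive multiple of it.

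Kawamata's covering lemma (as in Kawamata--Matsuda--Matsuki, Theorem~1-1-1, or Lazarsfeld's \emph{Positivity}, Proposition~4.1.12) then supplies a finite flat cover $\tau\colon X\to\widetilde{\mathbb P}$ with $X$ smooth and $\tau^*\mathfrak D_k=m_k(\tau^*\mathfrak D_k)_{\mathrm{red}}$ for every $k$. Moreover, $\tau^*\mathfrak D_{\widetilde{\mathbb P}}$ has snc support. Every prime divisor $\mathfrak D'_X$ over $\mathfrak D_k$ therefore has ramification index exactly $m_k$, which is divisible by each $e(\mathfrak D'_{\widetilde M}/\mathfrak D_k)$. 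This gives (2).

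For (1), recall the construction. One fixes a very ample $H$ and chooses $m_k$-th roots: one writes $m_kH-\mathfrak D_k$ as very ample, picks general members $H_{k,1},\dots,H_{k,N}\in|m_kH-\mathfrak D_k|$, and takes the normalization of the successive cyclic covers determined by $\mathfrak D_k+H_{k,i}\sim m_kH$. The branch locus of $\tau$ is then contained in
\[
\mathfrak D_{\widetilde{\mathbb P}}\cup\bigcup_{k,i}H_{k,i}.
\]
The set $U$ lies in $\widetilde{\mathbb P}\setminus\mathfrak D_{\widetilde{\mathbb P}}$, so we only need to ensure that the auxiliary divisors $H_{k,i}$ avoid $o$. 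Because the linear systems are very ample, hence base-point free, a general member misses the fixed point $o$. Generality imposes finitely many open conditions, namely the snc conditions of Kawamata's lemma together with $o\notin H_{k,i}$, and these can be satisfied simultaneously. The branch locus is then a closed set not containing $o$, and we shrink $U$ to an open neighbourhood of $o$ in its complement.

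The main point requiring care is checking that, in the standard proof of Kawamata's lemma, the transversality requirements on the $H_{k,i}$ are compatible with avoiding the given point $o$. Smoothness of $X$ relies on the $H_{k,i}$ meeting the other components and each other transversally, and by Bertini these are all open dense conditions on products of linear systems. Adding the open condition $o\notin\bigcup H_{k,i}$ still leaves a nonempty open set. A second, minor point is that $X$ may be disconnected; if so, we replace it by a connected component, which is still finite and surjective over $\widetilde{\mathbb P}$ with the same ramification indices.
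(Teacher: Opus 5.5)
Your proposal is correct and follows the paper's proof. The paper also takes $n_i$ to be the lcm of the ramification indices of the prime divisors of $\widetilde M$ over each component of $\mathfrak D_{\widetilde{\mathbb P}}$, applies Kawamata's covering lemma with these multiplicities, and chooses the auxiliary branch divisors to avoid $o$ before shrinking $U$; your remarks on combining the genericity conditions with $o\notin H_{k,i}$ and on passing to a connected component only spell out details the paper leaves implicit.
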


\begin{proof}
	For each irreducible component $\mathfrak{D}^i_{\widetilde{\mathbb{P}}}$ of $\mathfrak{D}_{\widetilde{\mathbb{P}}}$, we define
	\[
	n_i\coloneqq 
	\operatorname{lcm}
	\biggl\{
	e\biggl(\mathfrak{D}^{ij}_{\widetilde M} \big/ \mathfrak{D}^i_{\widetilde{\mathbb{P}}}\biggr)\, \bigg|\,
	\mathfrak{D}^{ij}_{\widetilde M}\subset q_{\widetilde M}^{-1}\bigl(\mathfrak{D}^{i}_{\widetilde{\mathbb{P}}}\bigr)
	\text{ a prime divisor}
	\biggr\}.
	\]
	Then applying Kawamata's covering lemma \cite[Theorem 17]{Kawamata1981} (see also \cite[Proposition~4.1.12]{Lazarsfeld2004}) to $\bigl(\widetilde{\mathbb{P}},\mathfrak{D}_{\widetilde{\mathbb{P}}}\bigr)$ with multiplicities $(n_i)$ gives a smooth finite cover
	$\tau\colon X\to\widetilde{\mathbb P}$ whose ramification index
	along every divisor over $\mathfrak{D}_{\widetilde{\mathbb{P}}}^i$ is $n_i$. Moreover, we remark that the auxiliary branch
	divisors in the construction may be chosen not to contain $o$ so that, after shrinking $U$, the branch locus of $\tau$ is therefore disjoint from $U$.
\end{proof}

Let $S$ be the normalization of an irreducible component of the fiber product $X\times_{\widetilde{\mathbb P}}\widetilde M$. Denote by $\overline{\tau}\colon S\rightarrow \widetilde{M}$ and $\gamma\colon S\rightarrow X$ the natural projections. Set 
\[
U_X\coloneqq \tau^{-1}(U)\quad \text{and}\quad
U_S\coloneq \gamma^{-1}(U_X)=\overline{\tau}^{-1}(U_M).
\]
Then $U_X\rightarrow U$ and $U_S\rightarrow U_M$ are \'etale finite morphisms by Lemma \ref{lem:Kawamata-Cover}. Choose points $\mathfrak{o}_S\in U_S$ and $o_X\in U_X$ such that $\overline{\tau}(\mathfrak{o}_S)=\mathfrak{o}$ and $\tau(o_X)=o$. Then we have the following commutative diagram:
\[
\begin{tikzcd}[column sep=large,row sep=large]
	\mathfrak{o}_S\in U_S \arrow[r,hook] \arrow[d,"{\gamma}" left] \arrow[rrr,bend left]
	& S
	\arrow[r,"\overline\tau"]
	\arrow[d,"\gamma" left]
	& \widetilde M
	\arrow[d,"q_{\widetilde M}"] 
	    & U_M\ni \mathfrak{o} \arrow[l,hook'] \arrow[d,"{q_M}"]\\
	o_X\in U_X \arrow[r,hook] \arrow[rrr,bend right]
	    & X
	\arrow[r,"\tau"]
	& \widetilde{\mathbb P}
	    & U\ni o\arrow[l,hook'].
\end{tikzcd}
\]
Denote the degree of $\gamma$ by $r'$. Then $r'\leq 2r=\deg(q_{\widetilde{M}})$. Let $\sigma_S$ be the rational section of $\gamma^*\Omega_X^1$, defined by the pullback of $\sigma_{\widetilde{M}}$ followed by the differential of $\tau$:
\[
\sigma_S
\coloneqq 
\gamma^*(d\tau)
\bigl(\overline\tau^*\sigma_{\widetilde M}\bigr)
\]

\begin{lemma}
	\label{lem:sigma-S-regular}
	The section \(\sigma_S\) is regular.
\end{lemma}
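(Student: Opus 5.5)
Since $S$ is normal, regularity of the rational section $\sigma_S$ of the vector bundle $\gamma^*\Omega_X^1$ can be checked at codimension one points of $S$, by Hartogs' theorem. So the plan is to fix a prime divisor $\mathfrak D_S\subset S$ and show that $\sigma_S$ has no pole along it. We split according to whether $\overline\tau(\mathfrak D_S)$ lies in $\mathfrak D_{\widetilde M}$.

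\emph{Case 1: $\mathfrak D_S$ does not map into $\mathfrak D_{\widetilde M}$.} The pole divisor of $\sigma_{\widetilde M}$ is supported on $\mathfrak D_{\widetilde M}$. Hence $\sigma_{\widetilde M}$ is regular at the generic point of $\overline\tau(\mathfrak D_S)$, which is a divisor or of higher codimension since $\overline\tau$ is finite. Pullbacks of regular sections are regular, and so is the composition with the bundle map $\gamma^*(d\tau)$. So $\sigma_S$ is regular at the generic point of $\mathfrak D_S$.

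\emph{Case 2: $\overline\tau(\mathfrak D_S)\subset\mathfrak D_{\widetilde M}$.} Since $\overline\tau$ is finite and surjective between normal varieties, $\mathfrak D_0\coloneqq\overline\tau(\mathfrak D_S)$ is a prime divisor of $\widetilde M$. It lies over a component $\mathfrak D'_{\widetilde{\mathbb P}}$ of $\mathfrak D_{\widetilde{\mathbb P}}$, and $\mathfrak D_1\coloneqq\gamma(\mathfrak D_S)$ is a prime divisor of $X$ over the same component. We apply Lemma~\ref{lem:Ramified-Index} with the following data:
\begin{itemize}
\item $X_0=\widetilde{\mathbb P}$, which is smooth, and $Y_0=\widetilde M$, which is normal, with $\gamma=q_{\widetilde M}$;
\item $X_1=X$, which is smooth, with the map $\tau$, and $Y_1=S$;
\item $\mathfrak B_0=\mathfrak D'_{\widetilde{\mathbb P}}$ and $\mathfrak B_1=\mathfrak D_1$;
\item the rational section $\sigma=\sigma_{\widetilde M}$.
\end{itemize}
The hypotheses are checked as follows. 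Condition (2) of Lemma~\ref{lem:Kawamata-Cover} gives the divisibility $e(\mathfrak D_0/\mathfrak B_0)\mid e(\mathfrak B_1/\mathfrak B_0)$. The form $(dq_{\widetilde M})(\sigma_{\widetilde M})=\xi_{\widetilde M}$ is regular on $\widetilde M_{\mathrm{reg}}$, and $\widetilde M_{\mathrm{reg}}$ contains the generic point of $\mathfrak D_0$ because $\widetilde M$ is normal. The map $\Psi$ of the lemma is exactly $\gamma^*(d\tau)\circ\overline\tau^*$, so $\Psi(\overline\tau^*\sigma_{\widetilde M})=\sigma_S$. The lemma then says $\sigma_S$ is regular at the generic point of $\mathfrak D_S$.

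The main point needing care is the last identification: the isomorphism $\overline\tau^*q_{\widetilde M}^*\cong\gamma^*\tau^*$ comes from the commutative square, and $\gamma^*(d\tau)$ is applied after it. This matches the definition of $\sigma_S$, so it is bookkeeping rather than a real obstacle. Another possible concern is that Lemma~\ref{lem:Kawamata-Cover}(2) might only control ramification indices along components of the boundary $\mathfrak D_{\widetilde{\mathbb P}}$ and not along the auxiliary branch divisors of $\tau$. This does not matter: over an auxiliary branch divisor, $\mathfrak D_S$ falls under Case 1, because there $\sigma_{\widetilde M}$ is already regular.
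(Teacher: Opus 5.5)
Your proposal is correct and follows the paper's proof. You reduce via Hartogs' theorem to codimension one points of the normal variety $S$, use that $\sigma_{\widetilde M}$ is regular off $\mathfrak D_{\widetilde M}$, and over $\mathfrak D_{\widetilde M}$ apply Lemma~\ref{lem:Ramified-Index}, with the divisibility from Lemma~\ref{lem:Kawamata-Cover} and the regularity of $\xi_{\widetilde M}$. Your explicit case split, including the observation that divisors over the auxiliary branch divisors of $\tau$ fall into the easy case, spells out what the paper compresses into one sentence.
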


\begin{proof}
	Note that $\sigma_{\widetilde{M}}$ is regular over $\widetilde{M}\setminus \mathfrak{D}_{\widetilde{M}}=\widetilde{M}\setminus q_{\widetilde{M}}^{-1}(\mathfrak{D}_{\widetilde{\mathbb{P}}})$. On the other hand, note that the differential $\xi_{\widetilde{M}}=d(q_{\widetilde{M}})(\sigma_{\widetilde{M}})$ is a regular $1$-form over $\widetilde{M}$, so it follows from Lemmas \ref{lem:Ramified-Index} and \ref{lem:Kawamata-Cover} that $\sigma_S$ is regular in codimension one. In particular, since $S$ is normal and $\gamma^*\Omega_X^1$ is locally free, by Hartogs' theorem, the section $\sigma_S$ is regular over $S$. 
\end{proof}

Let $f_S\colon S\to T^*X$ be the morphism defined by \(\sigma_S\); in other words, $f_S$ is the composition
\[
\begin{tikzcd}[row sep=large,column sep=large]
	f_S\colon S \arrow[r,"{\sigma_S}"]
	    & \gamma^*T^*X \arrow[r]
	        & T^*X.
\end{tikzcd}
\] 

\begin{theorem}\label{thm:global-counterexample}
	The morphism $f_S$ determines a generically regular spectral
	datum $s\in S_X^{r'}$ with $r'=\deg(\gamma)\leq 2r$
	such that $\mathcal H_X^{r'}(s)=\emptyset$.
\end{theorem}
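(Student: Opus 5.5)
The plan is to apply Proposition~\ref{prop:general-higgs-factors-through-Z} to the finite cover $\gamma\colon S\to X$ with the morphism $f_S$. This identifies $\mathcal H_X^{r'}(s)$ with $\mathcal{M}_{\cm}(S)$. We then show that $\mathcal{M}_{\cm}(S)=\emptyset$ by a local argument at $\mathfrak{o}_S$.

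The first step is to check the hypotheses of Proposition~\ref{prop:general-higgs-factors-through-Z}. By construction $S$ is normal and irreducible, and $\gamma$ is finite surjective of degree $r'$ onto the projective manifold $X$. By Lemma~\ref{lem:sigma-S-regular}, $\sigma_S$ is regular, so $f_S$ is a morphism with $\pi\circ f_S=\gamma$. It remains to show that $f_S$ is an embedding in codimension one; I expect this to be the main obstacle.

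To handle it, I would work over the dense open set $U$, where $\tau$ and $\overline\tau$ are étale. Over $U_X\to U$ the cotangent bundles are identified via $d\tau$. Hence $f_S|_{U_S}$ is the base change of $f_M|_{U_M}$ along the étale map $T^*U_X\to T^*U$. On $U_M\cap M_v$, Lemma~\ref{lem.f'-embedding-codim-one} shows $f_M$ is an embedding in codimension one, and this property is preserved under étale base change.

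Away from $U$, and along $\{v=0\}$, the situation is less clear. Two tools are available:
\begin{itemize}
\item On the complement of the pole and branch loci, $f_M$ is already injective in codimension one. Indeed, $u$ is recovered from $s$ and $F$, and $v$ from $s'$; away from $s'=0$ the map is determined by Proposition~\ref{prop:f-codim-one}.
\item Alternatively, we can sidestep the issue: an embedding only on a big open subset is needed. One can replace $S$ by the normalization of the image of $f_S$, or note that the proof of Proposition~\ref{prop:general-higgs-factors-through-Z} only uses that $f$ is \emph{generically} an embedding together with normality.
\end{itemize}
I would first try the second route. The key point is that the Cayley--Hamilton argument yields an $\mathscr O_{S'}$-module structure, where $S'$ is the normalization of $X_s$. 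Then $S'\to S$ is birational and finite, hence an isomorphism, since $S$ is normal and $f_S$ is generically injective. Generic injectivity holds because $f_S$ is an embedding on $U_S\cap\{v\ne0\}$, which is nonempty. Generic regularity of $s$ follows from the same open set.

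Next, suppose $\mathscr L\in\mathcal{M}_{\cm}(S)$. Its completed stalk $\widehat{\mathscr L}_{\mathfrak{o}_S}$ is a rank one MCM module over $\widehat{\mathscr O}_{S,\mathfrak{o}_S}$. Since $U_S\to U_M$ is étale at $\mathfrak{o}_S$ with $\overline\tau(\mathfrak{o}_S)=\mathfrak{o}$, there is an isomorphism $\widehat{\mathscr O}_{S,\mathfrak{o}_S}\cong\widehat{\mathscr O}_{M,\mathfrak{o}}$. Rank and the MCM property are preserved under completion, because completion is faithfully flat and preserves depth and dimension. This contradicts Lemma~\ref{lem:Non-Exi-MCMmodules}, as $\mathfrak{o}\in\{o_Y\}\times\mathbb A^m\times E$. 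Hence $\mathcal{M}_{\cm}(S)=\emptyset$, and therefore $\mathcal H_X^{r'}(s)=\emptyset$.
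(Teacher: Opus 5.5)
There is a genuine gap in your first step. You apply Proposition~\ref{prop:general-higgs-factors-through-Z} to $\gamma\colon S\to X$ itself, which requires $f_S$ to be an embedding in codimension one on all of $S$. Neither of your routes establishes this.

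Your first bullet only concerns the affine part $M$ away from $\{v=0\}$. The real difficulty lies along the divisors of $S$ over $\mathfrak{D}_{\widetilde{\mathbb P}}$: the hyperplane at infinity, the exceptional divisors of $\nu$, and the image of the pole divisor. Nothing in the construction controls $f_S$ there, and in fact the computation in Lemma~\ref{lem:Ramified-Index} shows the property can fail. Suppose $e$ is a proper divisor of the Kawamata index $m$, so $q=m/e>1$. Then the $dt_1$-component of $\sigma_S$ vanishes along $\mathfrak{D}_1$. The other components restrict to pullbacks of functions on $\mathfrak{D}_0$. Hence any two of the local branches $t=\zeta t_1^{q}$ ($\zeta^e=1$) of $S$ over $\mathfrak{D}_0$ are sent to the same covectors along $\mathfrak{D}_1$.

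Your second route rests on a false claim. Cayley--Hamilton makes $\mathscr{E}$ a module over $\mathscr{O}_{X_s}$ (more precisely, over the image of $\mathscr{O}_{T^*X}$ in $\operatorname{End}(\mathscr{E})$), not over the normalization of $X_s$. Upgrading this to an $\mathscr{O}_S$-module structure is exactly what the codimension-one embedding is for in the proof of Proposition~\ref{prop:general-higgs-factors-through-Z}. It gives $\mathscr{O}_S$-linearity on a big open set, which then extends by reflexivity. Generic injectivity plus normality of $S$ does not suffice. Already for curves, if the spectral curve has a cusp, the pushforward of $\mathscr{O}_{X_s}$ is a Higgs bundle in the fibre that comes from no sheaf on the normalization. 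Replacing $S$ by the normalization of the image of $f_S$ changes nothing, since that normalization is $S$ again.

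The repair, which is the paper's argument, is to drop the global identification $\mathcal{H}_X^{r'}(s)\cong\mathcal{M}_{\cm}(S)$. You only need the direction from Higgs bundles to MCM sheaves near $\mathfrak{o}_S$.
\begin{itemize}
\item Define $s$ directly as the Hartogs extension of the characteristic coefficients of multiplication by $\sigma_S$ on the locally free locus of $\gamma_*\mathscr{O}_S$. This needs no embedding hypothesis.
\item Get generic regularity from $U_S$ as you do. Note that $U_M\subset M_v$, because $U$ avoids $q_{\overline M}(\operatorname{Supp}\mathfrak{H})\supset q_M(W\times\{v=0\})$.
\item Given $(\mathscr{E},\theta)\in\mathcal{H}_X^{r'}(s)$, its restriction lies in $\mathcal{H}_{U_X}^{r'}(s|_{U_X})$. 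Over $U_X$, your étale base-change argument together with Lemma~\ref{lem.f'-embedding-codim-one} shows that $f_S|_{U_S}$ is an embedding in codimension one.
\item Proposition~\ref{prop:general-higgs-factors-through-Z} holds for any smooth variety. Applying it to $U_S\to U_X$ yields a rank one MCM sheaf on $U_S$.
\end{itemize}
Your completion argument at $\mathfrak{o}_S$ then contradicts Lemma~\ref{lem:Non-Exi-MCMmodules}.
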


\begin{proof}
	Since $\gamma$ is a finite morphism of degree $r'$, the push-forward $\mathscr E=\gamma_*\mathscr O_S$ is a reflexive coherent sheaf over $X$ of rank $r'$ \cite[Corollary 1.7]{Hartshorne1980}, and therefore $\mathscr{E}$ is locally free in codimension two since $X$ is smooth \cite[Corollary 1.4]{Hartshorne1980}. Over the locally free locus $X^{\circ}$ of $\mathscr{E}$, multiplication by \(\sigma_S\), followed by the projection
	formula, defines an integrable Higgs field
	\[
	\theta^{\circ}\colon\mathscr E|_{X^{\circ}}\longrightarrow
	\mathscr E|_{X^{\circ}}\otimes\Omega_{X^{\circ}}^1.
	\]
	By Hartogs' theorem, the characteristic coefficients of $\theta^{\circ}$ extend uniquely to $X$ and hence define a spectral datum $s\in S_X^{r'}$.
	
	Next we show that $s$ is generically regular. Over the open subset $U$, since $U_X\rightarrow U$ is \'etale, we have $T^*U_X\cong U_X\times_U T^*U$. On the other hand, note that $U_S$ is a connected component of $U_X\times_U U_M$ and the restriction $f_S|_{U_S}\colon U_S\longrightarrow T^*U_X$ is the restriction of the base change of
	\[
	f_{M_v}|_{U_M}\colon U_M\longrightarrow T^*U.
	\]
	In particular, since the latter is an embedding in codimension one by Lemma
	\ref{lem.f'-embedding-codim-one}, the same holds for
	\(f_S|_{U_S}\), and hence $s$ is generically regular.
	
	Finally we prove that $\mathcal{H}_X^{r'}(s)=\emptyset$. Assume to the contrary that $\mathcal H_X^{r'}(s)\neq \emptyset$. Then restricting to the open subset $U_X$ gives $\mathcal H_{U_X}^{r'}(s|_{U_X})\neq\emptyset$. Now applying Proposition \ref{prop:general-higgs-factors-through-Z} to $f_S|_{U_S}\colon U_S\rightarrow T^*U_X$ implies that $\mathcal{M}_{\cm}(U_S)\not=\emptyset$. Choose a rank one MCM sheaf $\mathscr{L}\in \mathcal{M}_{\cm}(U_S)$. Then, by the depth formula, the completion $\widehat{\mathscr{L}}_{\mathfrak{o}_S}$ of the stalk $\mathscr{L}_{\mathfrak{o}_S}$ of $\mathscr{L}$ at $\mathfrak{o}_S$ is a rank one MCM module over $\widehat{\mathscr{O}}_{U_S,\mathfrak{o}_S}$. On the other hand, since $U_S\rightarrow U_M$ is \'etale, there exists a natural isomorphism
	\[
	\widehat{\mathscr{O}}_{U_S,\mathfrak{o}_S}\cong \widehat{\mathscr{O}}_{U_M,\mathfrak{o}} = \widehat{\mathscr{O}}_{M,\mathfrak{o}}.
	\]
	This implies that $\widehat{\mathscr{O}}_{M,\mathfrak{o}}$ admits a rank one MCM module, which contradicts Lemma
	\ref{lem:Non-Exi-MCMmodules} and the choice $\mathfrak{o}\in \{o_Y\}\times \mathbb{A}^m\times E$. Therefore $\mathcal H_X^{r'}(s)=\emptyset$.
\end{proof}

\subsection{Explicit examples}

Let $(Y,o_Y)$ be one of the cones constructed in Examples \ref{ex:bidegree} and \ref{ex:ma}, with $o_Y$ being the vertex. 

\begin{lemma}\label{lem:completion-no-mcm}
	If $\dim Y\geq 4$, then $\widehat{\mathscr{O}}_{Y,o_Y}$ admits no rank one MCM modules.
\end{lemma}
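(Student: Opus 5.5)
Since $Y$ is a cone over a projectively normal polarized projective manifold $(Z,\mathscr O_Z(1))$ with $\dim Z=\dim Y-1\geq 3$, the plan is to reduce the statement for the completion to the statement for the graded ring $R=R(Z,\mathscr O_Z(1))$, and then use Proposition~\ref{prop:cone-acm}. Write $\mathfrak m$ for the irrelevant ideal, so that $\widehat{\mathscr O}_{Y,o_Y}=\widehat R$ is the $\mathfrak m$-adic completion. The key input is a comparison of divisor class groups: I would show that the natural map $\Cl(R_{\mathfrak m})\to\Cl(\widehat R)$, which is always injective for a normal local ring by Mori's theorem, is in fact an isomorphism under the hypothesis $\dim Y\geq 4$. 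The classical criterion (Danilov, or Flenner's theorem on local class groups of graded rings) says that for a normal graded ring $R=\bigoplus_{q\geq0}R_q$ over $\mathbb C$ with $Z=\operatorname{Proj}R$ smooth, the map $\Cl(R)\to\Cl(\widehat R)$ is bijective provided $H^1(Z,\mathscr O_Z(q))=0$ for all $q\in\mathbb Z$; more precisely, the local Picard group of the punctured spectrum is controlled by the graded pieces of $H^2_{\mathfrak m}(R)\cong\bigoplus_q H^1(Z,\mathscr O_Z(q))$ together with $H^1(Z,\mathscr O_Z)$.

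First I verify this vanishing in our examples. For $Z$ a smooth hypersurface of bidegree $(a,b)$ in $\mathbb P^1\times\mathbb P^{n-1}$ with $n-1\geq 3$, the exact sequence $0\to\mathscr O_A(q-a,q-b)\to\mathscr O_A(q,q)\to\mathscr O_Z(q,q)\to0$ and Künneth show $H^1(Z,\mathscr O_Z(q,q))=0$ for all $q$, because $H^1$ and $H^2$ of the relevant line bundles on $A=\mathbb P^1\times\mathbb P^{n-1}$ vanish when $n-1\geq3$. The one point to check is $H^2(A,\mathscr O_A(q-a,q-b))$, which vanishes since $H^j(\mathbb P^{n-1},\cdot)=0$ for $j=1,2\leq n-2$. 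For Example~\ref{ex:ma}, $Z\times\mathbb P^1$ with $Z$ a complete intersection of dimension $n-2\geq3$ behaves the same way: Künneth and the Koszul resolution give $H^1(Z,\mathscr O_Z(q))=0$ for all $q$ and $H^1(\mathbb P^1,\mathscr O(q))$ pairs only with $H^0$ terms. The point needing care is the term $H^0(Z,\mathscr O_Z(q))\otimes H^1(\mathbb P^1,\mathscr O(q))$ for $q\leq-2$, which vanishes because $H^0(Z,\mathscr O_Z(q))=0$ when $q<0$.

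Given $\Cl(R_{\mathfrak m})\cong\Cl(\widehat R)$, let $I$ be a rank one MCM $\widehat R$-module. It is reflexive (MCM over a normal ring of dimension $\geq2$), so $I\cong\widehat{J}$ for a divisorial ideal $J$ of $R_{\mathfrak m}$, using surjectivity of the class group map. Since completion is faithfully flat, $\operatorname{depth}\widehat J=\operatorname{depth}J$, so $J$ is a rank one MCM $R_{\mathfrak m}$-module. By Proposition~\ref{prop:cone-acm} this contradicts Lemma~\ref{lem.Ex}, respectively \cite[Theorem 3.4]{Ma2019}.

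The main obstacle is making the class group comparison rigorous. Concretely, I would compute $\Cl(\widehat R)=\Pic(\Spec\widehat R\setminus\{\mathfrak m\})$ by formal functions: it is the limit of $\Pic$ of the thickenings of $Z$ in the formal neighbourhood of the exceptional divisor of the blow-up of the vertex, that is, of $\Tot(\mathscr O_Z(-1))$. Obstructions and ambiguities in lifting line bundles across successive thickenings lie in $H^2(Z,\mathscr O_Z(q))$ and $H^1(Z,\mathscr O_Z(q))$ for $q\geq1$. The $H^1$ vanishing gives uniqueness of lifts, and the lifting itself is automatic for the classes coming from $\Pic(Z)$, which are all classes here because Grothendieck--Lefschetz-type arguments apply. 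If this direct route gets delicate, I would instead cite the theorem of Danilov and Flenner directly, which is exactly tailored to this setting and needs only the vanishing of $H^1(Z,\mathscr O_Z(q))$ for $q\geq0$ established above.
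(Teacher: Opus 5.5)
Your proposal is correct and follows the paper's route: use Flenner's criterion to get surjectivity (indeed bijectivity) of $\Cl(\mathscr O_{Y,o_Y})\to\Cl(\widehat{\mathscr O}_{Y,o_Y})$, descend a rank one MCM module along the faithfully flat completion, and conclude with Proposition~\ref{prop:cone-acm}. Your explicit check that $H^1(Z,\mathscr O_Z(q))=0$ for all $q$ is exactly the $S_3$ (depth $\geq 3$) condition at the vertex that the paper invokes; the paper justifies it only by the singularity being isolated, which gives $R_2$ but not $S_3$, so your computation usefully fills in that step.
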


\begin{proof}
	Since $o_Y$ is the unique singular point of $Y$, the local ring $\mathscr{O}_{Y,o_Y}$ satisfies $R_2$ and $S_3$ if $\dim Y\geq 4$. In particular, by \cite[Korollar 1.5]{Flenner1981}, the natural map $\Cl(\mathscr{O}_{Y,o_Y})\rightarrow \Cl(\widehat{\mathscr{O}}_{Y,o_Y})$ is surjective. The result then follows from Proposition \ref{prop:cone-acm} and the non-existence of rank one MCM sheaves on $Y$.
\end{proof}

\begin{example}[Degree and generators of the non-aCM example]\label{ex: gen nonacm}
	Let $n\geq 4$, $a\geq 1$ and $b\geq a+n+2$. Let $Y$ be the cone over the polarized manifold $(Z,\mathscr{O}_Z(1,1))$ given in Example \ref{ex:bidegree}, where $Z$ is a smooth hypersurface of bidegree $(a,b)$ in $\mathbb{P}^1\times \mathbb{P}^{n-1}$. Choose $p$ to be the homogeneous Noether normalization defined by
$n$ general sections
\[
x_1,\ldots,x_n\in H^0(\mathbb{P}^1\times \mathbb{P}^{n-1},\mathscr{O}(1,1)).
\]
Their common zero locus on $\mathbb{P}^1\times \mathbb{P}^{n-1}$ consists of $n$ reduced points 
$\Gamma=\{Q_1,\ldots,Q_n\}$.
We may assume that
$\Gamma\cap Z=\emptyset$ and that the points
$\operatorname{pr}_1(Q_i)\in\mathbb{P}^1$ are pairwise distinct.
The induced morphism $\overline{p}\colon Z\to\mathbb{P}^{n-1}$ is a finite cover, inducing a finite morphism $p\colon Y\rightarrow \mathbb{A}^n$ such that
\[
\deg(p) = \deg(\overline{p})
=(aH_1+bH_2)(H_1+H_2)^{n-1}
=a+(n-1)b.
\]
Since $(Z,\mathscr{O}_Z(1,1))$ is projectively normal, the ring $R\coloneqq A(Y)$ is generated (over $\mathbb{C}$) in degree one. Moreover,
\[
\dim_{\mathbb C}R_1
= h^0(Z,\mathscr{O}_Z(1,1)) = 
h^0\bigl(\mathbb P^1\times\mathbb P^{n-1},\mathscr O(1,1)\bigr)
=2n,
\]
whereas the degree-one part of
$S=\mathbb C[x_1,\ldots,x_n]$ has dimension $n$.
Hence a basis of $R_1/S_1$ gives a minimal set of algebra generators. Thus we may set $m=n$. \qed
\end{example}

\begin{example}[Degree and generators of Ma's example]\label{ex; degma}
Let $n\geq5$ be an integer and retain the notation of Example
\ref{ex:ma}: $N$ is an integer, $d_1,\ldots,d_{N-n+2}$ are
positive integers $\geq 2$, $Z\subset\mathbb P^N$ is the smooth complete
intersection of multidegree $(d_1,\ldots,d_{N-n+2})$, and $Y$ is the
affine cone over $Z\times\mathbb P^1$ with respect to
$\mathscr O_Z(1)\boxtimes\mathscr O_{\mathbb P^1}(1)$. Its graded
coordinate ring is the graded $\mathbb C$-algebra
\[
R\coloneqq A(Y)
\cong \bigoplus_{m=0}^{\infty}
H^0\bigl(Z\times\mathbb P^1,
\mathscr O_Z(m)\boxtimes\mathscr O_{\mathbb P^1}(m)\bigr),
\]
where $m$ ranges over the nonnegative integers. Choosing a general linear projection of the Segre embedding 
\[
Z\times \mathbb{P}^1\longrightarrow \mathbb{P}^N\times \mathbb{P}^1 \longrightarrow \mathbb{P}^{2N+1}
\]
yields a finite morphism $\overline{p}\colon Z\times \mathbb{P}^1\rightarrow \mathbb{P}^{n-1}$, which induces a finite morphism $p\colon Y\rightarrow \mathbb{A}^n$ such that 
\[
\deg(p)=\deg(\overline{p}) = \deg(\mathscr{O}_{Z}(1)\boxtimes\mathscr{O}_{\mathbb{P}^1}(1)) = (n-1)\prod_{i=1}^{N-n+2}d_i.
\]
Since $(Z\times \mathbb{P}^1,\mathscr{O}_Z(1)\boxtimes \mathscr{O}_{\mathbb{P}^1}(1))$ is projectively normal, the ring $R$ is generated in degree one, and
\[
R_1\cong 
H^0(Z,\mathscr O_Z(1))
\otimes
H^0(\mathbb P^1,\mathscr O_{\mathbb P^1}(1)).
\]
Therefore, we have that  $\dim_{\mathbb C}R_1=2(N+1)$ and $\dim_{\mathbb C}S_1=n$.
Thus we may choose $m=2N+2-n$. \qed
\end{example}

Now we are in the position to finish the proof of Theorem \ref{thm:Chen-Ngo-false}.

\begin{proof}[Proof of Theorem \ref{thm:Chen-Ngo-false}]
Let $(Y,o_Y)$ be either of the pointed cones in Examples
\ref{ex:bidegree} and \ref{ex:ma} with $n=\dim (Y)\geq 4$. Let
$p\colon Y\rightarrow\mathbb A^n$ be the homogeneous Noether
normalization chosen in the corresponding example, and let $r$ be its degree. Choose homogeneous elements $g_1,\ldots,g_m$ such that $g_1,\ldots,g_m$ form a minimal generating set of
$A(Y)\coloneqq \Gamma(Y,\mathscr O_Y)$ as an algebra over the polynomial ring
$A(\mathbb{A}^n)\coloneqq \Gamma(\mathbb A^n,\mathscr O_{\mathbb A^n})$.

By Lemma~\ref{lem:completion-no-mcm}, the completed local ring
$\widehat{\mathscr O}_{Y,o_Y}$ admits no rank one MCM module. Applying the construction from \S\,\ref{ss.Local-to-global} to $(Y,o_Y)$ yields a projective manifold $X$ of dimension $(n+m+1)$ such that by Theorem~\ref{thm:global-counterexample} there exists a generically regular spectral datum $s\in S_X^{r'}$ with $\mathcal{H}_X^{r'}(s)=\emptyset$, where $r'\leq 2r$, and we also have $r'\geq 3$ by \cite{HeLiu2024}.
\end{proof}

\begin{remark}
\label{rmk.Min-Possible}
    Let us record the minimal counterexamples using Examples \ref{ex:bidegree} and \ref{ex:ma}.
    \begin{enumerate}
        \item For the family of Example~\ref{ex: gen nonacm}, we have $r=a+(n-1)b$ and  $m\geq n$. Hence the smallest admissible choice in this family is $(n,a,b)=(4,1,7)$. It follows that for any $d\geq 9$, there exists a smooth projective $d$-fold $X$ and a $\operatorname{GL}_{r'}$-counterexample on $X$, where $r'\leq 44$. 

        \item For the family of Example~\ref{ex; degma}, the smallest case is a general sextic threefold $Z\subset\mathbb P^4$. In this case, we have that $r=24$, and that $m\geq 2\cdot4+2-5=5$.
Therefore, we obtain a $\operatorname{GL}_{r'}$-counterexample with $r'\leq 48$ on some smooth projective $d$-fold with $d\geq 11$.
    \end{enumerate}
\end{remark}

\section{$\operatorname{SL}_2$-examples}
\label{sec:sl2-counterexample}

The Chen--Ng\^o conjecture was formulated for $G$-Higgs bundles for reductive groups $G$, not only ordinary Higgs bundles (the $G=\operatorname{GL}_r$ case). For algebraic surfaces, it was studied in \cite{Huynh2025,SheshmaniWangXia2026} and proved for $G=\operatorname{SL}_{2r+1}$ in \cite{SheshmaniWangXia2026}. In this section, we construct counterexamples for $G=\operatorname{SL}_2$ in every dimension at least $2$.

\subsection{Setup}
\label{ss.Sl2-Setup}

We use the notation of Section~\ref{sec:hitchin-local}. Let $X$ be a projective manifold. The spectral base for $\operatorname{SL}_2$ is the
traceless locus
\[
S_X^{\operatorname{SL}_2}\coloneqq S_X^2\cap\bigl(\{0\}\times
H^0(X,\Sym^2\Omega_X^1)\bigr)\subset S_X^2.
\]
We denote by $\mathcal H_X^{\operatorname{SL}_2}$ the set of isomorphism
classes of rank two Higgs bundles $(\mathscr E,\theta)$ satisfying
$\det(\mathscr E)\cong\mathscr{O}_X$ and $\operatorname{tr}(\theta)=0$. The
corresponding spectral morphism is
$\sd_X^{\operatorname{SL}_2}\colon\mathcal H_X^{\operatorname{SL}_2}
\rightarrow S_X^{\operatorname{SL}_2}$, and for
$s\in S_X^{\operatorname{SL}_2}$ we write
$\mathcal H_X^{\operatorname{SL}_2}(s)$ for its fiber over $s$. Then the Chen--Ng\^o conjecture for $\operatorname{SL}_2$ predicts that
$\mathcal H_X^{\operatorname{SL}_2}(s)\neq\emptyset$ for every
$s\in S_X^{\operatorname{SL}_2}$.

Given a nonzero generically regular spectral datum
$s\in H^0(X,\Sym^2\Omega_X^1)$, there exists a line bundle $\mathscr L$, a
morphism $\alpha\colon\mathscr L\rightarrow\Omega_X^1$, and a section
$\tau\in H^0(X,\mathscr L^{\otimes2})$ such that
$\alpha(\mathscr L)$ is saturated in $\Omega_X^1$ and
\begin{equation}
\label{eq.decomp-s}
s=\alpha^2\tau,
\end{equation}
where $\alpha$ is viewed as a section of
$\Omega_X^1\otimes\mathscr L^{-1}$ \cite[Proposition~3.6]{HeLiu2024}.
The section $\tau$ determines a double cover
$p_s\colon\widetilde X_s\rightarrow X$ inside $\Tot(\mathscr L)$, and the
induced morphism
\[
\widetilde X_s\longrightarrow\Tot(\mathscr L)
\xlongrightarrow{\alpha}T^*X
\]
is a Cohen--Macaulayfication of the spectral variety $X_s$
\cite[Proposition~4.3]{HeLiu2024}. The modified spectral correspondence
identifies $\mathcal H_X^{\operatorname{SL}_2}(s)$ with the MCM sheaves $\mathscr M$ of generic rank one on $\widetilde X_s$ satisfying
$\det((p_s)_*\mathscr M)\cong\mathscr{O}_X$; compare
\cite[Theorem~4.7]{HeLiu2024}. If $\widetilde X_s$ is normal, this is also
the correspondence of Proposition~\ref{prop:general-higgs-factors-through-Z}
applied to $\widetilde X_s\rightarrow T^*X$, with the determinant condition
imposed.

\subsection{Endoscopic $\operatorname{SL}_2$-examples}

For a curve, a spectral datum is called \emph{endoscopic} if it lies in the
image of the square map
$H^0(X,\mathscr L\otimes\Omega_X^1)\rightarrow
H^0(X,\Sym^2\Omega_X^1)$ for some $2$-torsion line bundle $\mathscr L$;
see \cite[\S\,1]{maulik2021endoscopic}. 
The following criterion produces
counterexamples of this type.

\begin{proposition}
\label{prop.Critersion-SL2}
Let $X$ be a projective manifold of dimension $n$, and let
$p\colon Y\rightarrow X$ be a connected \'{e}tale double cover with
deck involution $\iota\colon Y\rightarrow Y$. Assume that
\begin{enumerate}
\item the N\'{e}ron--Severi group $\operatorname{NS}(Y)$ is torsion-free;
\item the action of $\iota$ on $\operatorname{NS}(Y)$ is trivial;
\item the action of $\iota$ on $H^0(Y,\Omega_Y^1)$ is multiplication by
$-1$; and
\item there exists a nonzero form $\beta\in H^0(Y,\Omega_Y^1)=H^0(Y,p^*\Omega_X^1)$ whose
vanishing locus has codimension at least two.
\end{enumerate}
Then there exists a nonzero generically regular spectral datum
$s\in S_X^{\operatorname{SL}_2}$ such that
$\mathcal H_X^{\operatorname{SL}_2}(s)=\emptyset$.
\end{proposition}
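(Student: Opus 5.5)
We use the spectral datum of the form $s=\beta\otimes\beta$ modulo the double cover. Since $\iota^*\beta=-\beta$, the symmetric square $\beta^2\in H^0(Y,p^*\Sym^2\Omega_X^1)$ is $\iota$-invariant, so it descends to a section $s\in H^0(X,\Sym^2\Omega_X^1)$. Pointwise $s$ is the elementary symmetric data of $\{\beta,-\beta\}$, with trace $0$. Hence $s\in S_X^{\operatorname{SL}_2}$. It is nonzero, and it is generically regular because $\beta\neq-\beta$ wherever $\beta\neq0$.

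Next we identify the relevant spectral cover. The map $f\colon Y\to T^*X$, $y\mapsto\beta(y)$, lifts $p$. Note that $f(\iota y)=-\beta(y)$ while $f(y)=\beta(y)$ in the same fibre $T^*_{p(y)}X$. So $f$ is injective away from the zero locus $Z(\beta)$, which has codimension $\geq2$. It is an immersion there because $p$ is étale, so on $Y\setminus Z(\beta)$ the map $f$ is locally a section. Since $Y$ is smooth, hence normal, Proposition~\ref{prop:general-higgs-factors-through-Z} applies with $r=2$. Together with the determinant condition from \S\ref{ss.Sl2-Setup}, it identifies $\mathcal H_X^{\operatorname{SL}_2}(s)$ with the set of line bundles $\mathscr M\in\Pic(Y)$ satisfying $\det(p_*\mathscr M)\cong\mathscr O_X$. (Rank one MCM sheaves on a smooth $Y$ are line bundles.) Equivalently, one can invoke the description of \cite[\S4]{HeLiu2024}, where here $\mathscr L$ is the $2$-torsion bundle defining $p$.

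It then remains to show that no line bundle $\mathscr M$ on $Y$ has $\det p_*\mathscr M\cong\mathscr O_X$. For an étale double cover with $p_*\mathscr O_Y=\mathscr O_X\oplus\eta$, where $\eta^{2}\cong\mathscr O_X$ and $\eta\not\cong\mathscr O_X$, we have
\[
\det p_*\mathscr M\cong \operatorname{Nm}(\mathscr M)\otimes\eta .
\]
So we need $\eta\notin\operatorname{Nm}(\Pic Y)$. Also $p^*\operatorname{Nm}(\mathscr M)\cong\mathscr M\otimes\iota^*\mathscr M$. Suppose $\operatorname{Nm}\mathscr M\cong\eta$. Since $p^*\eta\cong\mathscr O_Y$, this gives $\mathscr M\otimes\iota^*\mathscr M\cong\mathscr O_Y$. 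Passing to $\operatorname{NS}(Y)$ and using hypothesis (2), we get $2[\mathscr M]=0$. Hypothesis (1) then forces $[\mathscr M]=0$, so $\mathscr M\in\Pic^0(Y)$.

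Now hypothesis (3) gives $H^0(Y,\Omega^1_Y)^{\iota}=0$, so $H^0(X,\Omega_X^1)=0$ and therefore $q(X)=0$ and $\Pic^0(X)=0$. The involution $\iota$ acts on $\Pic^0(Y)$ by $-1$, since its tangent action on $H^1(Y,\mathscr O_Y)=\overline{H^0(\Omega^1_Y)}$ is $-1$. Hence $\operatorname{Nm}\colon\Pic^0(Y)\to\Pic(X)$ factors through the connected image $\Pic^0(X)=0$, so $\operatorname{Nm}(\mathscr M)\cong\mathscr O_X\neq\eta$. This is a contradiction, so $\mathcal H_X^{\operatorname{SL}_2}(s)=\emptyset$.

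The main obstacle is this last step, specifically making the norm map and the formula for $\det p_*\mathscr M$ precise. Verifying that $f$ is an embedding in codimension one is routine.
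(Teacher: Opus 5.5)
Your proof is correct and is essentially the paper's proof. Both arguments realize $Y$ in $T^*X$ via $\beta$; the paper factors this map as $Y\subset\Tot(\mathscr L)\xrightarrow{\alpha}T^*X$. Both then apply the refined spectral correspondence, use $\det(p_*\mathscr M)\cong\operatorname{Nm}(\mathscr M)\otimes\eta$, use hypotheses (1)--(2) to force $\mathscr M\in\Pic^0(Y)$, and conclude from $\Pic^0(X)=0$, which follows from hypothesis (3).

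Two cosmetic points. With the paper's sign convention, the descended quadratic coefficient is $\sigma_2(\beta,-\beta)=-\beta^2$, not $\beta^2$. Also, your remark that $\iota$ acts by $-1$ on $\Pic^0(Y)$ is superfluous: on its own it only gives $\operatorname{Nm}(\mathscr M)\in\{\mathscr O_X,\eta\}$. What actually closes the argument is that the connected group $\Pic^0(Y)$ is sent by the norm into $\Pic^0(X)=0$.
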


\begin{proof}
Let $\mathscr L$ be the nontrivial $2$-torsion line bundle on $X$ corresponding to $p$, so that
\[
p_*\mathscr{O}_Y\cong\mathscr{O}_X\oplus\mathscr L^{-1},\quad
\mathscr L\cong\mathscr L^{-1}\quad \text{and}\quad p^*\mathscr L\cong\mathscr{O}_Y.
\]
Since $p$ is \'{e}tale, the projection formula gives
$p_*\Omega_Y^1\cong\Omega_X^1\oplus
(\Omega_X^1\otimes\mathscr L^{-1})$. The hypothesis on the involution
therefore yields
\[
H^0(X,\Omega_X^1)=0\quad \text{and}\quad
H^0(Y,\Omega_Y^1)\cong
H^0(X,\Omega_X^1\otimes\mathscr L^{-1}).
\]

The form $\beta$ descends to a nonzero section
$\alpha\in H^0(X,\Omega_X^1\otimes\mathscr L^{-1})$. It induces a
morphism $\alpha\colon\Tot(\mathscr L)\rightarrow T^*X$ which is an
embedding away from the vanishing locus of $\beta$. The natural
embedding $Y\hookrightarrow\Tot(\mathscr L)$ is stable under fiberwise
multiplication by $-1$, and the composite
$f\colon Y\rightarrow\Tot(\mathscr L)\rightarrow T^*X$ satisfies
$f^*\lambda_{\mathrm{taut}}=\beta$. Proposition~\ref{prop:general-higgs-factors-through-Z}
gives a nonzero generically regular datum in $S_X^2$. Since the two sheets
of $Y\rightarrow X$ are exchanged by fiberwise multiplication by $-1$,
its first characteristic coefficient is zero. We denote the resulting
element of $S_X^{\operatorname{SL}_2}$ by $s$.

Suppose that $(\mathscr E,\theta)\in
\mathcal H_X^{\operatorname{SL}_2}(s)$. By
Proposition~\ref{prop:general-higgs-factors-through-Z}, there is a rank one MCM sheaf $\mathscr M$ on $Y$ such that
$\mathscr E\cong p_*\mathscr M$. Since $Y$ is smooth, $\mathscr M$ is a
line bundle. Moreover, we have
\[
\mathscr{O}_X\cong\det(p_*\mathscr M)
\cong\det(p_*\mathscr{O}_Y)\otimes\operatorname{Nm}_{Y/X}(\mathscr M)
\cong\mathscr L^{-1}\otimes\operatorname{Nm}_{Y/X}(\mathscr M),
\]
and hence $\operatorname{Nm}_{Y/X}(\mathscr M)\cong\mathscr L$. Pulling
back to $Y$, we obtain
$\mathscr M\otimes\iota^*\mathscr M\cong
p^*\operatorname{Nm}_{Y/X}(\mathscr M)\cong\mathscr{O}_Y$. Thus
$c_1(\mathscr M)+\iota^*c_1(\mathscr M)=0$ in
$\operatorname{NS}(Y)$. Since $\iota$ acts trivially, this says
$2c_1(\mathscr M)=0$, and the torsion-freeness of $\operatorname{NS}(Y)$ now gives $c_1(\mathscr M)=0$.
Consequently, $\mathscr M\in\Pic^0(Y)$. However, the norm map sends $\Pic^0(Y)$ to
$\Pic^0(X)$, while $H^0(X,\Omega_X^1)=0$ implies
$\Pic^0(X)=0$. Hence
$\operatorname{Nm}_{Y/X}(\mathscr M)\cong\mathscr{O}_X$, contradicting the facts that
$\operatorname{Nm}_{Y/X}(\mathscr M)\cong\mathscr L$ and
$\mathscr L$ is not isomorphic to $\mathscr{O}_X$.
\end{proof}

\subsubsection{Canonically polarized projective manifolds}

Now we construct explicit examples of canonically polarized projective manifolds, i.e. a projective manifold $X$ with $K_X$ ample, satisfying the hypothesis in Proposition \ref{prop.Critersion-SL2} in every dimension $\geq 2$.

\begin{theorem}
\label{thm:sl2-counterexample}
For every $n\geq2$, there exists a projective manifold $X$ of
dimension $n$ with $K_X$ ample and a generically regular spectral datum
$s\in S_X^{\operatorname{SL}_2}$ such that
$\mathcal H_X^{\operatorname{SL}_2}(s)=\emptyset$. In particular, the
Chen--Ng\^o conjecture for $\operatorname{SL}_2$ fails on such $X$.
\end{theorem}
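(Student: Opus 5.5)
Our plan is to apply Proposition~\ref{prop.Critersion-SL2}: we construct a smooth projective $Y$ of dimension $n$ with $K_Y$ ample and a free involution $\iota$, set $X=Y/\iota$ (so $K_X$ is ample, since $p^*K_X=K_Y$ and $p$ is finite étale), and verify conditions (1)--(4).

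\textbf{Construction.} Take a smooth projective curve $C$ of genus $g\ge 2$ with a fixed-point-free involution $j$ such that $C/j$ has genus $\ge 2$; then $j$ acts on $H^0(C,\Omega_C^1)$ with both eigenvalues. To kill invariant $1$-forms and keep the Picard group controlled, we would rather work with an ambient product $P=C_1\times\cdots\times C_k$ carrying a free involution $\iota_P=(j_1,\dots,j_k)$. We then choose $Y\subset P$ as a smooth, $\iota_P$-invariant, very ample complete intersection of dimension $n$ that avoids nothing in particular, since $\iota_P$ is already free. Lefschetz gives $H^0(Y,\Omega^1_Y)=H^0(P,\Omega^1_P)$ when $\dim Y\ge2$, which contains $\iota$-invariant forms coming from $C_i/j_i$. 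This violates (3). To repair it, we instead choose curves $C_i$ with involution $j_i$ whose quotient is $\mathbb P^1$, i.e.\ hyperelliptic, and use $\iota_P=(j_1,\dots,j_k)$ acting as $-1$ on every $H^0(C_i,\Omega^1)$. Freeness of $\iota_P$ then fails on each factor, but the fixed locus of the product involution is the finite set of tuples of Weierstrass points. We choose the invariant complete intersection $Y$ general in an $\iota_P$-invariant linear system, so that it misses these finitely many points and is smooth. Then $Y\to X$ is a connected étale double cover, and (3) holds by Lefschetz. For (4), the restriction of a general $1$-form from $P$ should have isolated zeros, or at least zeros in codimension $\ge2$: on $P$, a form $\sum\omega_i$ vanishes exactly where every $\omega_i$ vanishes, which has codimension $k$, and intersecting with $Y$ adds the conormal condition. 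A dimension count using genericity of $Y$ should give codimension $\ge2$ once $k\ge n+1$ or so.

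\textbf{Picard conditions (1),(2).} For $n\ge3$, Grothendieck--Lefschetz gives $\operatorname{Pic}(Y)\cong\operatorname{Pic}(P)$, hence $\operatorname{NS}(Y)\cong\operatorname{NS}(P)$. For very general $C_i$, $\operatorname{NS}(P)=\mathbb Z^k$ is generated by the fibre classes, since very general curves have no correspondences; it is torsion-free and fixed by $\iota_P$, because $j_i$ preserves the point class on $C_i$. For $n=2$ we need an equivariant Noether--Lefschetz statement: a very general member of an $\iota_P$-invariant ample linear system of sufficiently large degree has $\operatorname{NS}(Y)=\operatorname{NS}(P)$. This is presumably what the appendix supplies, and we would invoke it, reducing to the infinitesimal Noether--Lefschetz argument within the invariant (or anti-invariant) subsystem.

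\textbf{Main obstacle.} We expect the $n=2$ case to be the hardest, because the equivariant Noether--Lefschetz theorem is needed and invariant systems are not general. We also expect difficulty in checking simultaneously that the invariant system is base-point-free away from the fixed points, so that a Bertini argument gives smoothness and freeness, and that condition (4) holds. To handle freeness and smoothness we would first use $\iota_P$-invariant sections of $\mathscr O(2d,\dots,2d)$ pulled back from $\prod C_i/j_i=(\mathbb P^1)^k$, which are base-point-free. With these in hand, Proposition~\ref{prop.Critersion-SL2} finishes the proof.
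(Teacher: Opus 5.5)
Your proposal is correct in outline. It follows the same strategy as the paper but uses a different ambient space, and that changes how condition (4) of Proposition~\ref{prop.Critersion-SL2} is verified.

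\textbf{The ambient pair.} The paper takes a very general principally polarized abelian $(n+1)$-fold $A$ with the involution $x\mapsto -x+a$ for $a\in A[2]$, and lets $Y$ be a very general smooth member of $|H|^{\iota_A}$ avoiding the finitely many fixed points. You replace $(A,\iota_A)$ by $P=C_1\times\cdots\times C_k$ with $\iota_P$ the product of hyperelliptic involutions. Taking $k=n+1$ and a hypersurface already suffices; for $n=2$ this makes $P$ itself the threefold to which Corollary~\ref{cor: finite group surface noether lefschetz} applies.

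\textbf{Why the two pairs behave the same.} In both cases:
\begin{itemize}
\item The involution acts by $-1$ on $H^1$, hence trivially on $H^2(P,\mathbb Z)=\bigoplus_i H^2(C_i,\mathbb Z)\oplus\bigoplus_{i<j}H^1(C_i,\mathbb Z)\otimes H^1(C_j,\mathbb Z)$. So, as with $\iota_A$ on $H^2(A,\mathbb Z)$, you do not really need the $C_i$ to be very general for condition (2).
\item The fixed locus is finite.
\item $K_Y$ is ample by adjunction, since $g(C_i)\geq 2$.
\item Conditions (1)--(3) follow from Lefschetz for $n\geq3$ and from the equivariant Noether--Lefschetz theorem for $n=2$. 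Torsion-freeness of $\operatorname{NS}(Y)$ comes from $H_1(Y,\mathbb Z)\cong H_1(P,\mathbb Z)$.
\end{itemize}

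\textbf{Condition (4): what each route needs.} The paper uses $\Omega^1_A\cong\mathscr O_A^{\oplus(n+1)}$. The Gauss map $Y\to\mathbb P^n$ pulls $\mathscr O(1)$ back to the ample bundle $N_{Y/A}$, so it is finite. The zero locus of any nonzero $\beta$ is one of its fibres, and no genericity of $Y$ or $\beta$ is needed.

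On $P$ the cotangent bundle is not trivial, so your hedged dimension count has to be carried out properly. The zero locus of $\beta|_Y$ is not $Y\cap Z(\beta_P)$. It is the larger set $\{y\in Y:\beta_P(y)\in N^*_{Y/P,y}\}$, which imposes $n$ conditions wherever $\beta_P(y)\neq0$. An incidence-variety argument then shows it is finite for general $Y$, provided two things hold:
\begin{itemize}
\item the invariant system separates $1$-jets away from $\operatorname{Fix}(\iota_P)$, which is true for a high power of a bundle pulled back from $P/\iota_P$ (its invariant system is then already base-point-free);
\item $Y$ avoids the finitely many zeros of $\beta_P$.
\end{itemize}

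\textbf{If you use only sections pulled back from $(\mathbb P^1)^k$.} Then $1$-jet separation fails along the ramification divisors. This is harmless, because a general $\omega_i$ does not vanish at Weierstrass points. However, for $n=2$ you must apply the appendix with $\Gamma=(\mathbb Z/2)^k$ rather than $\langle\iota_P\rangle$.
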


\begin{proof}
Let $A$ be a very general principally polarized abelian $(n+1)$-fold, and
fix $a\in A[2]$. Define an involution $\iota_A$ on $A$ by
$\iota_A(x)=-x+a$. Its fixed locus is finite. Choose a sufficiently
positive $\iota_A$-linearized very ample line bundle $H$ such that the
invariant linear system $|H|^{\iota_A}$ is basepoint-free. Let $Y$ be a
very general member of $|H|^{\iota_A}$. By Bertini's theorem, $Y$ is
smooth, and we may choose it disjoint from $\operatorname{Fix}(\iota_A)$.
Thus $\iota\coloneqq\iota_A|_Y$ acts freely. Set
$X\coloneqq Y/\langle\iota\rangle$, and let
$p\colon Y\rightarrow X$ be the quotient map. By adjunction, $K_Y\cong H|_Y$ is ample. Since $p$ is finite and
\'{e}tale, $p^*K_X\cong K_Y$, and therefore $K_X$ is ample.

The classical Lefschetz theorem shows that
$H^2(A,\mathbb Z)\rightarrow H^2(Y,\mathbb Z)$ is an isomorphism if
$n\geq3$ and an injection with torsion-free cokernel if $n=2$
\cite[Theorem~3.1.17 and Example~3.1.18]{Lazarsfeld2004}. Hence
$H^2(Y,\mathbb Z)$, and therefore $\operatorname{NS}(Y)$, is
torsion-free. 

For $n\geq3$, the weak Lefschetz theorem gives
$\operatorname{NS}(A)_{\mathbb Q}\cong\operatorname{NS}(Y)_{\mathbb Q}$.
For $n=2$, the same conclusion follows from an equivariant version of the Noether--Lefschetz theorem established in
Corollary~\ref{cor: finite group surface noether lefschetz}.
Since $A$ is very general, $\operatorname{NS}(A)\cong\mathbb Z$.
Moreover, $\iota_A$ acts trivially on $H^2(A,\mathbb Z)$. It follows that
$\iota$ acts trivially on $\operatorname{NS}(Y)\cong \mathbb{Z}$. 

The natural restriction induces an isomorphism
$H^0(Y,\Omega_Y^1)\cong H^0(A,\Omega_A^1)$. Since $\iota_A$ acts as
$-1$ on the translation-invariant one forms on $A$, the action of
$\iota$ on $H^0(Y,\Omega_Y^1)$ is multiplication by $-1$.

Finally, let $\beta$ be a general element of
$H^0(Y,\Omega_Y^1)\cong H^0(A,\Omega_A^1)$. We claim that its vanishing
locus in $Y$ has codimension at least two. Let $N_{Y/A}^*$ be the
conormal bundle of $Y$ in $A$. The form $\beta$ vanishes at $y\in Y$ if
and only if the line $N_{Y/A,y}^*$ is spanned by $\beta(y)$. Consider the
composite
\[
Y\cong\mathbb P(N_{Y/A}^*)\longrightarrow
\mathbb P(\Omega_A^1|_Y)\cong Y\times\mathbb P^n
\longrightarrow\mathbb P^n.
\]
The pullback of $\mathscr{O}_{\mathbb P^n}(1)$ is
$N_{Y/A}\cong H|_Y$, which is ample, so this composite is finite. The
form $\beta$ determines a point of $\mathbb P^n$, and its vanishing locus
on $Y$ is the fiber over that point. It is therefore finite and has
codimension at least two. Proposition~\ref{prop.Critersion-SL2} now
applies to $p\colon Y\rightarrow X$.
\end{proof}

\subsubsection{$K$-trivial projective manifolds}
\label{sec: k triv}

Chen--Ng\^o's conjecture for $\operatorname{GL}_r$-Higgs bundles over $K$-trivial projective manifolds, i.e., a projective manifold $X$ with $K_X$ numerically trivial, is confirmed in \cite{PatelWeissmann2026b}. In the following, we construct counterexamples on $K$-trivial projective manifolds for $\operatorname{SL}_2$ in every dimension $\geq 3$ using Proposition \ref{prop.Critersion-SL2}.

\begin{theorem}
\label{thm: k trivial counterexamples}
For every $n\geq3$, there exists a smooth projective $n$-fold $X$ with
$\omega_X\cong\mathscr{O}_X$ and a generically regular spectral datum
$s\in S_X^{\operatorname{SL}_2}$ such that
$\mathcal H_X^{\operatorname{SL}_2}(s)=\emptyset$.
\end{theorem}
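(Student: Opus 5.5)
We apply Proposition~\ref{prop.Critersion-SL2} to an étale double cover $p\colon Y\to X$ with $\omega_X$ trivial. The natural candidate is $Y=E\times V$, where $E$ is an elliptic curve and $V$ is a simply connected Calabi--Yau $(n-1)$-fold carrying a \emph{fixed-point-free} involution $\iota_V$ such that $\iota_V^*\omega_V=-\omega_V$. Set $\iota=(-1_E+a)\times\iota_V$ on $Y$, where $a\in E[2]\setminus\{0\}$ is a $2$-torsion point; then $\iota$ acts freely on $Y$. On the canonical form $dz\wedge\omega_V$ it acts as $(-1)(-1)=+1$, so $X\coloneqq Y/\langle\iota\rangle$ has $\omega_X\cong\mathscr O_X$. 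Moreover $H^0(Y,\Omega_Y^1)=\mathbb C\,dz$ because $V$ is simply connected with $h^{1,0}(V)=0$, and $\iota$ acts on it by $-1$. This gives condition (3). The form $\beta=dz$ is nowhere vanishing, which gives condition (4).

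For $V$ we take a smooth complete intersection of type $(2,\dots,2)$, with $n+1$ quadrics, in $\mathbb P^{2n}$. The dimension is $2n-(n+1)=n-1$ and the degree sum is $2(n+1)=2n+1+1$, so $V$ is Calabi--Yau. We choose the quadrics invariant under the involution $x_i\mapsto(-1)^i x_i$ on $\mathbb P^{2n}$, suitably chosen. This involution sends $\omega_V$ to $-\omega_V$ precisely because the adjunction residue transforms by the determinant, and that determinant equals $-1$ for an appropriate sign pattern. The fixed locus consists of the two linear subspaces given by the even and odd coordinates, of dimensions roughly $n$ and $n-1$. A general invariant complete intersection of $n+1$ quadrics misses these, since restricted to each eigenspace the quadrics are general quadrics in at least $n+1$ equations on a $\mathbb P^{\le n}$. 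Hence $\iota_V$ is free, and Bertini in the invariant linear system gives smoothness. For $n-1\ge2$, Lefschetz gives $\pi_1(V)=1$ and $h^{1,0}(V)=0$. Since $n\ge3$, we have $\dim V\ge2$; the Lefschetz hyperplane theorem also gives $H^2(V,\mathbb Z)\cong\mathbb Z$ when $\dim V\ge3$.

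It remains to check conditions (1) and (2). Künneth gives
\[
H^2(Y,\mathbb Z)=H^2(E)\oplus \bigl(H^1(E)\otimes H^1(V)\bigr)\oplus H^2(V)=\mathbb Z\oplus H^2(V,\mathbb Z),
\]
which is torsion-free whenever $H^2(V,\mathbb Z)$ is, and hence $\operatorname{NS}(Y)$ is torsion-free. On $H^2(E)$ the involution acts trivially, since $-1$ preserves orientation. On $H^2(V)=\mathbb Z h$ it acts trivially, since the hyperplane class is invariant. Hence $\iota$ acts trivially on $\operatorname{NS}(Y)$.

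The main obstacle is the case $n=3$, where $V$ is a K3-type surface and $H^2(V)$ is large. For a very general invariant member we need $\operatorname{NS}(V)=\mathbb Z h$ together with $\iota$-triviality. We would handle this with the equivariant Noether--Lefschetz statement of the appendix (Corollary~\ref{cor: finite group surface noether lefschetz}), applied to the invariant family, giving $\operatorname{NS}(V)_{\mathbb Q}=\mathbb Q h$. Torsion-freeness holds automatically since $V$ is simply connected. There is a second, smaller point to verify: the parity of the sign pattern must simultaneously make the anti-symplectic condition hold and keep the action free. If the quadric construction fails at $n=3$, we would instead use an Enriques-type quotient, taking a K3 surface with a free anti-symplectic involution (Horikawa model). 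Its Néron--Severi group is not rank one, so we would then need to check triviality of $\iota$ on the invariant part directly.
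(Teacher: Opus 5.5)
There is a genuine gap. It starts with an arithmetic error in the choice of $V$, and the sign claim does not survive the correction.

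\textbf{The choice of $V$.} A complete intersection of degrees $d_1,\dots,d_k$ in $\mathbb P^N$ has $\omega_V\cong\mathscr O_V(\sum d_i-N-1)$. For $n+1$ quadrics in $\mathbb P^{2n}$ this gives $\omega_V\cong\mathscr O_V(1)$. So $V$ is of general type, $\omega_Y\cong\operatorname{pr}_V^*\mathscr O_V(1)$, and $\omega_X$ is not trivial. Your ``residue transforms by the determinant'' step therefore has no meaning here. The two lifts of $x_i\mapsto(-1)^ix_i$ to $\mathbb C^{2n+1}$ have determinants of opposite sign, because the residue integrand is not homogeneous of degree $0$.

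\textbf{The corrected $V$ and the sign.} The Calabi--Yau choice is $n$ quadrics in $\mathbb P^{2n-1}$. Even then, no ``appropriate sign pattern'' exists for even $n$. Suppose the involution $g$ has eigenspaces of dimensions $a$ and $b$, and $k_+$ of the quadrics are $g$-even. Then $V$ meets $\mathbb P^{a-1}$ unless $k_+\ge a$, and meets $\mathbb P^{b-1}$ unless $k_+\ge b$. So freeness forces $k_+=n$ and $a=b=n$. In that case $g$ acts on the residue form by $\det g=(-1)^n$. For even $n$ the involution therefore preserves $\omega_V$. With $-1_E$ on the elliptic factor, $\omega_X$ is then the nontrivial $2$-torsion bundle rather than $\mathscr O_X$. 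The corrected construction does work for odd $n\ge5$, but it cannot cover all $n$.

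\textbf{The case $n=3$.} A K3 surface with a free involution covers an Enriques surface, so its Picard rank is at least $10$. Hence $\operatorname{NS}(V)_{\mathbb Q}=\mathbb Q h$ is impossible. Corollary~\ref{cor: finite group surface noether lefschetz} concerns very general members of a sufficiently positive invariant linear system on a threefold; it does not apply here, and its conclusion would be false. Your fallback, the K3 cover $K$ of an Enriques surface, is the right object; it is also exactly what the corrected quadric model gives at $n=3$. What must be shown is that the covering involution $\sigma$ acts trivially on all of $\operatorname{NS}(K)$; ``checking triviality on the invariant part'' is vacuous. The paper obtains this from Namikawa: for a general Enriques surface, every line bundle on $K$ is $\sigma$-invariant.

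\textbf{The missing reduction.} You also lack the step that makes higher dimensions easy. The paper proves $n=3$ with $Y_0=K\times E$ and $\iota_0=\sigma\times[-1]_E$. It then sets $X=X_0\times B$ with $B$ abelian of dimension $n-3$, and $s=\operatorname{pr}_{X_0}^*s_0$. If an $\operatorname{SL}_2$-Higgs bundle realized $s$, restricting it to $X_0\times\{b\}$ and composing with the projection $\Omega^1_X|_{X_0\times\{b\}}\to\Omega^1_{X_0}$ would realize $s_0$, a contradiction. Applying Proposition~\ref{prop.Critersion-SL2} directly to $Y_0\times B$ would violate the hypothesis that $\iota$ acts by $-1$ on $H^0(Y,\Omega^1_Y)$. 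That is why your approach was forced to look for simply connected Calabi--Yau factors with free anti-Calabi--Yau involutions in every dimension.
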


\begin{proof}
Let $S$ be a general Enriques surface, and let $K\rightarrow S$ be its
$K3$-cover with covering involution $\sigma$. By
\cite[Propositions~2.3 and~5.6]{Namikawa1985}, every line bundle on $K$
is $\sigma$-invariant. Let $E$ be an elliptic curve and set
$Y_0\coloneqq K\times E$ and
$\iota_0\coloneqq\sigma\times[-1]_E$. Since $\sigma$ acts freely, so does
$\iota_0$. Thus $X_0\coloneqq Y_0/\langle\iota_0\rangle$ is a smooth
projective threefold, and the quotient map
$p_0\colon Y_0\rightarrow X_0$ is an \'{e}tale double cover. Choose nonzero forms $\zeta_K\in H^0(K,\omega_K)$ and
$\eta\in H^0(E,\Omega_E^1)$. The involutions $\sigma$ and $[-1]_E$ act
by $-1$ on $\zeta_K$ and $\eta$, respectively. Hence
$\zeta_K\wedge\eta$ is an invariant nowhere-vanishing volume form on
$Y_0$ and descends to $X_0$, proving $\omega_{X_0}\cong\mathscr{O}_{X_0}$.

By the product formula for Picard groups
\cite[Chapter~III, Exercise~12.6]{Hartshorne1977}, the group
$\operatorname{NS}(Y_0)$ is torsion-free, and $\iota_0$ acts trivially on
it: $\sigma$ acts trivially on $\Pic(K)$, while $[-1]_E$
acts trivially on $\operatorname{NS}(E)$. Moreover,
$H^0(Y_0,\Omega_{Y_0}^1)\cong H^0(E,\Omega_E^1)$, and $\iota_0$ acts on
this space by $-1$. The nowhere-vanishing one form
$\pr_E^*\eta$ therefore satisfies
Proposition~\ref{prop.Critersion-SL2}. We obtain a generically regular
spectral datum $s_0\in S_{X_0}^{\operatorname{SL}_2}$ with
$\mathcal H_{X_0}^{\operatorname{SL}_2}(s_0)=\emptyset$.

Let $B$ be an abelian variety of dimension $n-3$, with $B$ a point when
$n=3$, and put $X\coloneqq X_0\times B$. Then $\omega_X\cong\mathscr{O}_X$, and the pullback
$s\coloneqq\pr_{X_0}^*s_0$ is generically regular. If it were realized by
an $\operatorname{SL}_2$-Higgs bundle, restriction to
$X_0\times\{b\}$, followed by the projection
$\Omega_X^1|_{X_0\times\{b\}}\rightarrow\Omega_{X_0}^1$, would produce
an $\operatorname{SL}_2$-Higgs bundle with spectral datum $s_0$, a
contradiction.
\end{proof}

\subsection{Non-endoscopic $\operatorname{SL}_2$-examples}

We next construct counterexamples where
the factor $\tau$ in \eqref{eq.decomp-s} is not a square.

\begin{theorem}\label{thm: non-endo}
    For every $n\geq 3$, there exists a smooth projective $n$-fold $X$ and a generically regular spectral datum $s\in S_X^{\operatorname{SL}_2}$ such that
    \begin{enumerate}
        \item there does not exist a $2$-torsion line bundle $\mathscr{T}$ on $X$ such that $s$ lies in the image of the square morphism $H^0(X,\mathscr{T}\otimes \Omega_X^1)\to H^0(X,\operatorname{Sym}^2\Omega_X^1)$, and
        \item the Hitchin fiber $\mathcal H_X^{\mathrm{SL}_2}(s)=\emptyset$.
    \end{enumerate}
\end{theorem}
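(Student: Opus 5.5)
We will construct a ramified (non-étale) double cover $p\colon Y\to X$ with $Y$ smooth and then run the same norm/determinant argument as in Proposition~\ref{prop.Critersion-SL2}, now with the determinant of $p_*\mathscr{O}_Y$ equal to $\mathscr L^{-1}$ for a line bundle $\mathscr L$ with $\mathscr L^{2}\cong\mathscr{O}_X(\Delta)$, where $\Delta\neq 0$ is the branch divisor.

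\textbf{Construction.} Take $A$ a very general principally polarized abelian variety of dimension $n+1$ with the involution $\iota_A(x)=-x$. Choose a sufficiently positive symmetric very ample $H$. Let $Y\in|H|^{\iota_A}$ be a very general member passing through some, but not all, of the $2$-torsion points, and smooth there. At a fixed point $a\in Y$, the involution $-1$ acts on $T_aY$ by $-1$, so the fixed points are isolated. For $n\ge 3$ the quotient would then have isolated singularities, which is not what we want.

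So instead we replace $A$ by $A\times C$, where $C$ is a hyperelliptic curve with its hyperelliptic involution $j$, and use $\iota=(-1)\times j$. Its fixed locus $A[2]\times\mathrm{Fix}(j)$ is still isolated, which is again bad.

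A cleaner choice is $\iota=\mathrm{id}_A\times j$ on $A\times C$ with $\dim A=n$. Take $Y\subset A\times C$ an invariant very ample divisor meeting $A\times\mathrm{Fix}(j)$ transversally. Then the fixed locus is a smooth divisor and $X=Y/\iota$ is smooth, with $p$ a double cover branched along a nonempty smooth divisor $\Delta$.

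\textbf{Spectral datum.} The $\iota$-anti-invariant $1$-forms come from $H^0(C,\Omega_C^1)$, since $j$ acts by $-1$ there. Take $\beta$ to be the restriction of a general such form. We check that its zero locus has codimension $\ge2$ by the same finiteness argument as in Theorem~\ref{thm:sl2-counterexample}, using ampleness of $N_{Y}$. The form $\beta$ is anti-invariant, so it gives an embedding in codimension one $Y\to\Tot(\mathscr L)\to T^*X$. By Proposition~\ref{prop:general-higgs-factors-through-Z} this yields a traceless generically regular datum $s=\alpha^2\tau$, with $\tau$ cutting out $\Delta$. Non-endoscopy follows because $\tau$ vanishes on $\Delta\neq0$, so $s$ is not a square of a section of $\mathscr T\otimes\Omega_X^1$ with $\mathscr T$ torsion. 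Here we use saturation and uniqueness of the decomposition \eqref{eq.decomp-s}.

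\textbf{Emptiness.} Since $Y$ is smooth, any $(\mathscr E,\theta)$ in the fiber is $p_*\mathscr M$ with $\mathscr M$ a line bundle. The determinant condition gives $\operatorname{Nm}(\mathscr M)\cong\mathscr L$, hence $\mathscr M\otimes\iota^*\mathscr M\cong p^*\mathscr L$. Taking classes, $c_1(\mathscr M)+\iota^*c_1(\mathscr M)=p^*c_1(\mathscr L)$.

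By Lefschetz ($n\ge3$, so $\dim Y\ge3$), $\operatorname{NS}(Y)$ is torsion-free and equals $\operatorname{NS}(A\times C)$. For $A$ and $C$ very general this is $\mathbb Z\Theta\oplus\mathbb Z F$, where $F$ is the fiber class over a point of $C$, and $\iota$ acts trivially on it. Hence $2c_1(\mathscr M)=p^*c_1(\mathscr L)$. On the other hand, $2p^*c_1(\mathscr L)=p^*\Delta=2R$, where $R$ is the ramification divisor. The class of $R$ is $(\#\mathrm{Fix}\,j)\cdot F|_Y=(2g+2)F$, so $c_1(\mathscr M)=\tfrac{g+1}{2}F$.

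The parity constraint therefore requires choosing $g$ even, so that $g+1$ is odd. We then need to rule out this class integrally. This parity computation is the main obstacle: the choice of $\mathscr L$, which has $2^{2\dim}$ possibilities, may shift the class by torsion, and we need the half-class $\tfrac12 p^*c_1(\mathscr L)$ not to be integral in $\operatorname{NS}(Y)$.

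If the parity fails, the fallback is to modify the branch data so that $R$ has odd multiplicity of the generator $F$. For instance, we could choose $Y$ so that only part of the fixed locus is hit, replacing $C$ by a curve with an involution having an odd number of fixed-point pairs, or base change along an odd-degree cover. We then conclude that no such $\mathscr M$ exists, as in Proposition~\ref{prop.Critersion-SL2}.
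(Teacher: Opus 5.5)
Your construction breaks down before the emptiness argument starts, because the spectral datum does not exist. For a \emph{ramified} double cover $p\colon Y\to X$, a morphism $Y\to T^*X$ over $X$ is a section of $p^*\Omega_X^1$, not a $1$-form on $Y$. The map $dp\colon p^*\Omega_X^1\to\Omega_Y^1$ is not surjective along the ramification divisor $R$. Moreover, every anti-invariant section of $p^*\Omega_X^1$ has the form $\eta\,p^*\alpha$ with $\alpha\in H^0(X,\Omega_X^1\otimes\mathscr L^{-1})$, so it vanishes on $R$: the two opposite eigenvalues must coincide, hence vanish, over $\Delta$.

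Take a point of $R$ over a Weierstrass point $w$, with local coordinate $c^2=t$. There your form $\beta=\operatorname{pr}_C^*\omega|_Y$ equals $\phi\,dc=\phi\,p^*dt/(2c)$, and $\phi\neq0$ on that component of $R$ whenever $\omega(w)\neq0$. A nonzero $\omega$ has only $2g-2$ zeros, so this happens at some of the $2g+2$ Weierstrass points. Thus $\beta$ defines only a rational map to $T^*X$ with a pole along $R$; equivalently, $\beta^2$ descends only to a section of $\Sym^2\Omega_X^1(\Delta)$.

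No other choice of form rescues this $Y$. Here $X=Y/\iota$ is an ample divisor in $(A\times C)/\iota=A\times\mathbb P^1$, with $Y=X\times_{\mathbb P^1}C$, so $\mathscr L\cong\OO_{\mathbb P^1}(g+1)|_X$. The conormal sequence together with Kodaira vanishing then gives $H^0(X,\Omega_X^1\otimes\mathscr L^{-1})=0$. Hence no datum $s=\alpha^2\tau$ has $Y$ as its spectral cover. Separately, your codimension-two claim is false: $\beta$ vanishes on the divisor $Y\cap(A\times\operatorname{div}\omega)$. The finiteness argument of Theorem~\ref{thm:sl2-counterexample} relied on $\Omega_A^1$ being trivialized by global forms.

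The emptiness step also has nothing to detect. Since $p^*\mathscr L\cong\OO_Y(R)$ and $[R]=(2g+2)F$, you get $c_1(\mathscr M)=(g+1)F$, not $\tfrac{g+1}{2}F$, and this is integral for every $g$. Indeed $\mathscr M=\operatorname{pr}_C^*\OO_C((g+1)w)|_Y$ satisfies $\operatorname{Nm}_{Y/X}(\mathscr M)\cong\mathscr L$ by base change from $C\to\mathbb P^1$. Your fallback cannot fix this: a double cover of curves always has an even number $b$ of branch points and $[R]\equiv bF$, so $\tfrac12p^*c_1(\mathscr L)$ is always integral.

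Two ideas are missing.
\begin{enumerate}
\item The ramified cover must genuinely be a spectral cover $\eta^2=\tau$ in $\Tot(\mathscr L)$ for a saturated subbundle $\alpha\colon\mathscr L\hookrightarrow\Omega_X^1$. The paper takes $X=(C\times Z)/\langle\sigma\times\iota\rangle$, with $C$ of genus two and $\iota$ the free involution on the K3 cover $Z$ of an Enriques surface. It lets $\mathscr L$ be the descent of $\operatorname{pr}_C^*\omega_C$, and $\tau$ the descent of $\operatorname{pr}_C^*q$ for a quadratic differential $q=h^*s_0$ with reduced divisor avoiding the Weierstrass points. So the branching is cut out by $q$, not by the fixed points of an involution.
\item The obstruction is an equivariant sign, not a numerical class. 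Pull back to $C_q\times Z$ and use $\Pic(C_q\times Z)\cong\Pic(C_q)\times\Pic(Z)$. A solution would then give a $\sigma$-equivariant isomorphism $\operatorname{Nm}_{C_q/C}(\mathscr N)\cong\omega_C$. This is impossible, because at a Weierstrass point $w$ the involution acts with weight $+1$ on the left side and with weight $-1$ on $\omega_{C,w}$.
\end{enumerate}
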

\begin{proof}
We first construct a $3$-dimensional example. Let $C$ be a smooth hyperelliptic curve of genus two, let $h\colon C\rightarrow\mathbb P^1$
be the hyperelliptic double cover, and let $\sigma$ be its hyperelliptic
involution. We have that $\omega_C\cong h^*\mathscr{O}_{\mathbb P^1}(1)$.
Choose a section $s_0\in H^0\bigl(\mathbb P^1,\mathscr{O}_{\mathbb P^1}(2)\bigr)$
whose zero divisor consists of two distinct points disjoint from the branch
locus of $h$, and set $q\coloneq h^*s_0\in H^0(C,\omega_C^{\otimes 2})$. Let $Z$ be the $K3$-cover of an Enriques surface, and let
$\iota\colon Z\to Z$ be the corresponding involution.
Then the diagonal involution $g\coloneqq \sigma\times\iota$
acts freely on $C\times Z$. Set $X\coloneqq (C\times Z)/\langle g\rangle$,
and denote the quotient map by $\rho\colon C\times Z\to X$.
\bigskip

\noindent\textit{Step 1: Construction of the spectral datum $s$.}
\smallskip

The naturally $g$-linearized line bundle
$\operatorname{pr}_C^*\omega_C$ descends to a line bundle $\mathscr{L}$ on $X$.
The natural inclusion $\operatorname{pr}_C^*\omega_C
\hookrightarrow
\Omega_{C\times Z}^1$
is $g$-equivariant. 
It descends to a line subbundle $\alpha\colon\mathscr{L}\hookrightarrow\Omega_{X}^1$.
The section $\operatorname{pr}_C^*q$ descends to a section $\tau\in H^0(X,\mathscr{L}^{\otimes 2})$, so we get a generically regular spectral datum $s\in S_X^{\operatorname{SL}_2}$, defined explicitly as follows:
\[
s\coloneqq \alpha^2\tau\in
H^0(X,\operatorname{Sym}^2\Omega_{X}^1).
\]
\smallskip

\noindent\textit{Step 2: The spectral datum $s$ is not endoscopic.}
\smallskip

Suppose that $s=\beta^2$ for some $2$-torsion line bundle $\mathscr{T}$ and some $\beta\in H^0(X,\Omega_{X}^1\otimes \mathscr{T})$. Let $\mathscr{L}_\beta\subset\Omega_{X}^1$ be the saturated rank one
subsheaf generated by $\beta(\mathscr{T}^{-1})$. Then $\beta$ factors through a section 
\[
\widetilde\beta\in H^0(X,\mathscr{L}_\beta\otimes \mathscr{T}).
\]
Since $\beta^2=s=\alpha^2\tau$, the tensors $\beta$ and $\alpha$ determine
the same line in $\Omega_{X}^1$ at the generic point of $X$. 
As both $\mathscr L_\beta$ and $\alpha(\mathscr L)$ are saturated, they coincide.
Therefore, after identifying $\mathscr L_\beta$ with $\mathscr L$, we have $\tau=\widetilde\beta^{\,2}$, so $\operatorname{div}(\tau)$ is non-reduced. However, since $\rho^*\tau=\operatorname{pr}_C^*q$ and $\operatorname{div}(q)$ is a reduced divisor, we get a contradiction.
\bigskip

\noindent\textit{Step 3: The Hitchin fiber over $s$ is empty.}
\smallskip

Let $p\colon \widetilde{X}_s\to X$ be the Cohen--Macaulayfication of the spectral variety corresponding to $s$ introduced in the beginning of this section. Since $\operatorname{div}(\tau)$ is smooth, the variety $\widetilde{X}_s$ is a smooth double cover (corresponding to $\tau$) of $X$. Let $\pi\colon C_q\to C$ be the spectral curve defined by $q$. Here \(\widetilde X_s\subset\operatorname{Tot}(\mathscr L)\) and \(C_q\subset\operatorname{Tot}(\omega_C)\) are defined by \(\eta^2+\tau=0\) and \(\xi^2+q=0\), respectively. Since \(q\) is invariant under the natural action of \(\sigma\) on \(\omega_C^{\otimes2}\), the canonical action on \(\omega_C\) induces an involution \(\widehat\sigma\) on \(C_q\), satisfying \(\pi\circ\widehat\sigma=\sigma\circ\pi\).
Since $\operatorname{div}(q)$ is reduced, the curve $C_q$ is smooth. Moreover, as $\rho^*\mathscr{L}\cong \pr_C^* \omega_C$ and $\rho^*\tau=\pr_C^* q$, by the functoriality of the construction of double covers, we have the following Cartesian diagram:
\[
\begin{tikzcd}[column sep=large,row sep=large]
C_q\times Z
    \arrow[r,"r"]
    \arrow[d,"{\pi\times\operatorname{id}_Z}"']
&
\widetilde X_s
    \arrow[d,"p"]
\\
C\times Z
    \arrow[r,"\rho"']
&
X.
\end{tikzcd}
\]
The map \(r\) is the quotient by the involution \(\widehat\sigma\times\iota\).

Assume to the contrary that $s$ were realized by an $\mathrm{SL}_2$-Higgs bundle.
Then, there would exist
$\mathscr{M}\in\operatorname{Pic}(\widetilde{X}_s)$ such that $\det(p_*\mathscr{M})\cong\mathscr O_{X}$ by Proposition \ref{prop:general-higgs-factors-through-Z} (see also \cite[Theorem 4.6]{HeLiu2024}).
As in the proof of Proposition \ref{prop.Critersion-SL2}, the determinant formula gives $\operatorname{Nm}_{\widetilde{X}_s/X}(\mathscr{M})\cong\mathscr L$. Since $\rho\colon C\times Z\to X$ is a $\mathbb{Z}/2\mathbb{Z}$-torsor, we have a $\mathbb{Z}/2\mathbb{Z}$-equivariant isomorphism 
\[
\rho^*\operatorname{Nm}_{\widetilde{X}_s/X}(\mathscr{M})\cong \rho^*\mathscr L.
\]
\bigskip

\noindent\textbf{Claim.} \textit{We have the following isomorphism which is equivariant with respect to the $\mathbb{Z}/2\mathbb{Z}$-action defined by $\rho$:}
\[
\operatorname{Nm}_{C_q\times Z/C\times Z}(r^*\mathscr{M})
\cong
\operatorname{pr}_C^*\omega_C.
\]
\smallskip

\noindent\textit{Proof of the claim.} It suffices to show that the natural isomorphisms 
\[
\operatorname{Nm}_{C_q\times Z/C\times Z}(r^*\mathscr{M})\cong \rho^*\operatorname{Nm}_{\widetilde{X}_s/X}(\mathscr{M})\quad \text{and}\quad
\rho^*\mathscr{L}\cong \operatorname{pr}_C^*\omega_C
\]
are $\mathbb{Z}/2\mathbb{Z}$-equivariant. The first equivariance follows from the fact that for any coherent sheaf $\mathscr{F}$ on $\widetilde{X}_s$,  the canonical flat base-change isomorphism
$(\pi\times\operatorname{id}_Z)_*(r^*\mathscr F)
\cong
\rho^*(p_*\mathscr F)$ is $\mathbb{Z}/2\mathbb{Z}$-equivariant, and the second equivariance follows by construction: $\mathscr L$ is defined as
the descent along $\rho$ of $\operatorname{pr}_C^*\omega_C$. \qed
\bigskip

Since $Z$ is a $K3$ surface, we can apply the product formula for Picard groups \cite[Chapter~III, Exercise~12.6]{Hartshorne1977} and get line bundles $\mathscr{N}\in\operatorname{Pic}(C_q)$
and $\mathscr{P}\in\operatorname{Pic}(Z)$ such that $r^*\mathscr{M} \cong\mathscr{N} \boxtimes \mathscr{P}$.
Taking norms and restricting to a fiber \(\{c\}\times Z\) of \(\operatorname{pr}_C\colon C\times Z\to C\) yields
$\mathscr{P}^2\cong\mathscr O_Z$. Since $Z$ is a $K3$ surface, we
obtain $\mathscr{P}\cong\mathscr O_Z$. Therefore, we have that $r^*\mathscr{M}\cong \operatorname{pr}_{C_q}^*\mathscr{N}$.

Now, transport the canonical $\widehat\sigma\times\iota$-linearization of $r^*\mathscr M$ through the preceding isomorphism. This gives an isomorphism $\operatorname{pr}_{C_q}^*(\widehat\sigma^*\mathscr N)
\cong
\operatorname{pr}_{C_q}^*\mathscr N$.
Because $H^0(Z,\mathscr O_Z)=\mathbb C$,
this is the pullback of a unique isomorphism
\[
\begin{tikzcd}[column sep=large]
    \widehat{\sigma}^*\mathscr{N} \arrow[r,"{\cong}"]
       &  \mathscr{N}
\end{tikzcd}
\]
The cocycle condition upstairs implies the cocycle condition downstairs. Therefore $\mathscr N$ is naturally $\widehat\sigma$-linearized.
Combined with $\mathscr P\cong\mathscr O_Z$, the claim above becomes $\operatorname{pr}_C^*
\operatorname{Nm}_{C_q/C}(\mathscr N)
\cong
\operatorname{pr}_C^*\omega_C$ equivariantly. Again using $H^0(Z,\mathscr O_Z)=\mathbb C$, this is pulled back from a $\sigma$-equivariant isomorphism $\operatorname{Nm}_{C_q/C}(\mathscr N)\cong \omega_C$.

However, this is impossible. Choose a Weierstrass point $w\in C$.
Since $q(w)\neq0$, the fiber $\pi^{-1}(w)=\{x_+,x_-\}$
consists of two distinct points, which are exchanged by
$\widehat\sigma$. Hence the induced action on $(\pi_*\mathscr N)_w=
\mathscr N_{x_+}\oplus\mathscr N_{x_-}$
has determinant $-1$. The same argument shows that the action on
$\det(\pi_*\mathscr O_{C_q})_w$ has weight $-1$. Therefore the
action on 
\[
\operatorname{Nm}_{C_q/C}(\mathscr N)_w
=
\det(\pi_*\mathscr N)_w
\otimes
\det(\pi_*\mathscr O_{C_q})_w^{-1}
\]
has weight $+1$. On the other hand, the canonical action of $\sigma$ on $\omega_{C,w}$ has weight $-1$, since $d\sigma_w=-1$, which is a contradiction.

\bigskip

\noindent\textit{Step 4: For $n>3$, we can take products of $X$ with $\mathbb{P}^{n-3}$.}

\end{proof}

\section{Polystable example}
\label{sec:polystable}

Let $(X,H)$ be a polarized projective manifold. We write $\mathcal H_{X,H}^{r,\mathrm{ss}}$ and $\mathcal H_{X,H}^{r,\mathrm{ps}}$ for the loci of $\mu_H$-semistable and $\mu_H$-polystable rank $r$ Higgs bundles, respectively. 
The spectral morphism therefore restricts to a map $\sd_{X,H}^{r,\mathrm{ps}}\colon \mathcal H_{X,H}^{r,\mathrm{ps}}\rightarrow S_X^r$. 
It is shown in \cite[Corollary~1.3]{HeLiu2024} that this map is surjective in rank two. 
In this section, we show that the analogous statement fails in rank three for surfaces.

The construction is geometric in nature: We choose an algebraic surface $X$ and a spectral datum $s$ on $X$ so that its associated spectral variety $X_s$ separates into components of degrees one and two over $X$. We arrange the degree two cover so that the slope of the rank two summand can never equal the slope of the rank one summand. 

\subsection{A parity obstruction}

We begin with a numerical criterion for producing the spectral datum in Theorem~\ref{thm:polystable-counterexample}. Let $X$ be a smooth projective surface satisfying the following conditions:
\begin{itemize}
    \item The cotangent bundle admits a decomposition $\Omega_X^1\simeq\mathscr A\oplus\mathscr{O}_X\vartheta$, where $\mathscr A$ is a line bundle and $\vartheta$ is a nowhere vanishing $1$-form spanning the trivial summand.

    \item There exists $a\in H^0(X,\mathscr A^{\otimes 2})$ such that the associated double cover $\pi\colon Y\rightarrow X$ is smooth and connected.
\end{itemize}
Let $\eta\in H^0(Y,\pi^*\mathscr A)$ be the tautological section, so that $\eta^2=\pi^*a$. It defines a closed immersion $Y\hookrightarrow\Tot(\mathscr A)\hookrightarrow T^*X$. Its image is disjoint from the graph $\Gamma_\vartheta\subset T^*X$. The finite cover $\Gamma_\vartheta\sqcup Y\rightarrow X$ therefore determines a generically regular spectral datum $s\in S_X^3$.

\begin{proposition}\label{prop:parity-obstruction}
In the situation above, assume that $c_1(\mathscr A)\cdot H$ is odd and that $c_1(\mathscr L)\cdot\pi^*H$ is even for every line bundle $\mathscr L$ on $Y$. Then $\mathcal H_{X,H}^{3,\mathrm{ss}}(s)=\emptyset$.
\end{proposition}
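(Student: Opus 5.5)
The plan is to identify every Higgs bundle in the fibre explicitly via Proposition~\ref{prop:general-higgs-factors-through-Z}, and then show that none of them can be $\mu_H$-semistable by comparing slopes.

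The finite cover $\Gamma_\vartheta\sqcup Y\to X$ is a disjoint union of smooth varieties. Its map to $T^*X$ is a closed embedding, since $Y$ and $\Gamma_\vartheta$ are disjoint. So Proposition~\ref{prop:general-higgs-factors-through-Z} applies with the normal (reducible) variety $\Gamma_\vartheta\sqcup Y$. A rank one MCM sheaf on a smooth variety is a line bundle, and $\Gamma_\vartheta\cong X$. Hence every $(\mathscr E,\theta)\in\mathcal H_X^3(s)$ has the form
\[
\mathscr E\cong \mathscr N\oplus\pi_*\mathscr L,\qquad \theta=\vartheta\cdot\mathrm{id}_{\mathscr N}\oplus\theta_Y,
\]
with $\mathscr N\in\Pic(X)$ and $\mathscr L\in\Pic(Y)$. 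Here $\theta_Y$ is induced by multiplication by $\eta$ composed with $\mathscr A\hookrightarrow\Omega^1_X$. Both summands are $\theta$-invariant, so they are Higgs subsheaves.

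Semistability forces $\mu_H(\mathscr N)=\mu_H(\pi_*\mathscr L)$. Indeed, $\mathscr N$ is a Higgs subsheaf of $\mathscr E$, which gives $\mu_H(\mathscr N)\le\mu_H(\mathscr E)$. The summand $\pi_*\mathscr L$ is also a Higgs subsheaf, which gives $\mu_H(\pi_*\mathscr L)\le\mu_H(\mathscr E)$. Since $\mu_H(\mathscr E)$ is the weighted average $\tfrac13\mu_H(\mathscr N)+\tfrac23\mu_H(\pi_*\mathscr L)$, both inequalities are equalities. Therefore
\[
2\,c_1(\mathscr N)\cdot H=c_1(\pi_*\mathscr L)\cdot H .
\]

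Next compute $c_1(\pi_*\mathscr L)$ with the determinant formula already used in the proof of Proposition~\ref{prop.Critersion-SL2}:
\[
\det\pi_*\mathscr L\cong\operatorname{Nm}_{Y/X}(\mathscr L)\otimes\det\pi_*\mathscr O_Y .
\]
For the double cover defined by $a\in H^0(X,\mathscr A^{\otimes2})$ we have $\pi_*\mathscr O_Y\cong\mathscr O_X\oplus\mathscr A^{-1}$. Hence
\[
c_1(\pi_*\mathscr L)=\pi_*c_1(\mathscr L)-c_1(\mathscr A).
\]
By the projection formula, $\pi_*c_1(\mathscr L)\cdot H=c_1(\mathscr L)\cdot\pi^*H$, which is even by hypothesis. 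Since $c_1(\mathscr A)\cdot H$ is odd, $c_1(\pi_*\mathscr L)\cdot H$ is odd. But $2\,c_1(\mathscr N)\cdot H$ is even, so the slope equality fails. Thus no element of $\mathcal H_X^3(s)$ is semistable, and $\mathcal H^{3,\mathrm{ss}}_{X,H}(s)=\emptyset$.

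The main point needing care is the first step, showing that every Higgs bundle with datum $s$ splits as stated. This requires the bijection of Proposition~\ref{prop:general-higgs-factors-through-Z} to be compatible with the disjoint-union decomposition: an $\mathscr O$-module on $\Gamma_\vartheta\sqcup Y$ is a pair of modules, one on each piece, and its pushforward is their direct sum. This is formal, but I would check it explicitly, since the reducible case of that proposition is being used.
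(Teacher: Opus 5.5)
Your proposal is correct and follows the paper's proof essentially step for step. In both, the spectral correspondence on the smooth cover $\Gamma_\vartheta\sqcup Y$ splits every Higgs bundle in the fibre as a line bundle plus $\pi_*\mathscr L$, semistability forces the two Higgs-invariant summands to have equal slopes, and $\det(\pi_*\mathscr L)\cong\operatorname{Nm}_{Y/X}(\mathscr L)\otimes\mathscr A^{-1}$ makes $\deg_H(\pi_*\mathscr L)$ odd. Your even-versus-odd comparison of $2c_1(\mathscr N)\cdot H$ with $c_1(\pi_*\mathscr L)\cdot H$ is the paper's integral-versus-half-integral slope comparison restated, and your weighted-average argument spells out the slope-equality step that the paper leaves implicit.
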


\begin{proof}
Set $Z\coloneqq \Gamma_\vartheta\sqcup Y$. Since $Z$ is smooth and its natural map to $T^*X$ is a closed immersion, the spectral correspondence of Proposition~\ref{prop:general-higgs-factors-through-Z} entails that every Higgs bundle in the fiber of $s$ is of the form
$(\mathscr L_1,\vartheta)\oplus(\pi_*\mathscr L_2,m_\eta)$
for some $\mathscr L_1\in\Pic(X)$ and $\mathscr L_2\in\Pic(Y)$, where $m_\eta$ denotes multiplication by $\eta$.

Suppose that one of these Higgs bundles is $\mu_H$-semistable. Its two summands are Higgs-invariant, so they must have the same slope. The rank one summand has integral slope, since $\mu_H(\mathscr L_1)=c_1(\mathscr L_1)\cdot H$. On the other hand, we have $\det(\pi_*\mathscr L_2)\simeq\operatorname{Nm}_\pi(\mathscr L_2)\otimes\mathscr A^{-1}$, and hence
\[
\deg_H(\pi_*\mathscr L_2)
=
c_1(\mathscr L_2)\cdot\pi^*H-c_1(\mathscr A)\cdot H.
\]
The first term is even by assumption and the second is odd. Thus $\deg_H(\pi_*\mathscr L_2)$ is odd, so $\mu_H(\pi_*\mathscr L_2)$ is a half-integer. It cannot equal the integral slope of $\mathscr L_1$, which contradicts semistability.
\end{proof}

\subsection{An explicit example of $(X,H)$}

By Proposition \ref{prop:parity-obstruction}, we need to construct a polarized algebraic surface $(X,H)$ satisfying the conditions from the previous section. In the following we will construct $X$ as an \'etale quotient of a surface $E\times C$, where $E$ is an elliptic curve and $C$ is a projective curve with genus two. 

\subsubsection{Construction of the surface $X$}

Let $G\cong (\mathbb Z/2\mathbb Z)^2$, and write its nonzero elements as $\epsilon_1,\epsilon_2,\epsilon_3$, with $\epsilon_1+\epsilon_2+\epsilon_3=0$. Let $B\cong \mathbb P^1$, and choose a connected $G$-Galois cover $q\colon C\rightarrow B$ branched at five points $b_1,\ldots,b_5$, with local monodromies $\epsilon_1,\epsilon_1,\epsilon_1,\epsilon_2,\epsilon_3$. Such a cover exists by the Riemann existence theorem, and the Riemann--Hurwitz formula gives $g(C)=2$. The involution $\epsilon_1$ has six fixed points on the genus two curve $C$, since it occurs as the local monodromy over three branch points of $q\colon C\to B$. It is therefore the hyperelliptic involution of $C$. If $h\colon C\rightarrow C/\langle\epsilon_1\rangle\cong \mathbb P^1$ denotes the hyperelliptic map, then $q$ factors as the composition of $h$ with a degree two map $\mathbb P^1\rightarrow B$. Since $\omega_C\cong  h^*\mathscr{O}_{\mathbb P^1}(1)$, it follows that $q^*\mathscr{O}_B(1)\cong \omega_C^{\otimes 2}$.

Choose a point $b_0\in B\setminus\{b_1,\ldots,b_5\}$, and let
$t_0\in H^0(B,\mathscr{O}_B(1))$ be a non-zero section vanishing at $b_0$. We can regard $a_C\coloneqq q^*t_0$ as a section of $\omega_C^{\otimes2}$, and let
$r\colon C'\rightarrow C$ be the double cover defined by
$(\omega_C,a_C)$. The natural $G$-linearization of $\omega_C$ lifts the action of $G$ to $C'$.
This action is free. Indeed, if a nonzero element of $G$ fixes a point of
$C$, then it acts by $-1$ on the corresponding fiber of $\omega_C$ and
therefore exchanges the two points of $C'$ above it. We set $B'\coloneqq C'/G$.
The deck involution of $r$ descends to a double cover
$\beta\colon B'\rightarrow B$, branched at $b_0,b_1,\ldots,b_5$, and
$C'\rightarrow B'$ is an étale $G$-cover. In particular, we have the following commutative diagram:
\[
\begin{tikzcd}[row sep=large, column sep=large]
        & C' \arrow[dl,"{r}" above]    \arrow[dr]
            &    \\
    C \arrow[dr,"q"] \arrow[d,"h"]   
        & 
            & B'\cong C'/G  \arrow[dl,"{\beta}"]\\
    \mathbb{P}^1=C/\langle \epsilon_1\rangle \arrow[r]
        & C/G=B\cong \mathbb{P}^1.
            &
\end{tikzcd}
\]

Choose an elliptic curve $E$ such that $\operatorname{Hom}(J(C'),E)=0$, and fix an identification $G\cong E[2]$. We let $G$ act on $E$ by translations and diagonally on $E\times C$ and $E\times C'$. These diagonal actions are free. We then obtain two smooth projective surfaces:
\[
X\coloneqq (E\times C)/G
\quad \text{and}\quad Y\coloneqq (E\times C')/G.
\]
The natural maps induce genus one fibrations $f\colon X\rightarrow B$ and $g\colon Y\rightarrow B'$, together with a finite double cover $\pi\colon Y\rightarrow X$ lying over $\beta$:
\[
\begin{tikzcd}[column sep=large,row sep=large]
Y=(E\times C')/G \arrow[r,"\pi"] \arrow[d,"g"']
  & X=(E\times C)/G \arrow[d,"f"] \\
B' \arrow[r,"\beta"']
  & B .
\end{tikzcd}
\]

\begin{lemma}\label{lem:geometry-of-quotient-surface}
 The fibration $f$ has precisely five multiple fibers $2R_1,\ldots,2R_5$, where each $R_i$ is a smooth elliptic curve, while $g$ is a smooth isotrivial genus one fibration. 
 
 Moreover, let $F$ denote the class of a general fiber of $f$. Then, there is a nowhere vanishing $1$-form $\vartheta$ on $X$ such that
\[
\Omega_X^1\cong\mathscr{O}_X\vartheta\oplus\omega_X,
\qquad
\omega_X^{\otimes 2}\cong \mathscr{O}_X(F).
\]
\end{lemma}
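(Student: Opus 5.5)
The plan is to read off the fibres of $f$ and $g$ from the stabilizers of the $G$-action on $C$ and $C'$, and then to compute $\Omega^1_X$ by descending the product decomposition of $\Omega^1_{E\times C}$.

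\emph{Fibres of $f$ and $g$.} The fibre of $f$ over $b\in B$ is the image of $E\times q^{-1}(b)$. If $b\notin\{b_1,\ldots,b_5\}$, then $G$ acts simply transitively on $q^{-1}(b)$. Hence the fibre is $(E\times G)/G\cong E$, a smooth reduced fibre.

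If $b=b_i$, the stabilizer of a point $c\in q^{-1}(b_i)$ is $\langle\epsilon\rangle\cong\mathbb Z/2$, where $\epsilon$ is the local monodromy. The fibre is then $E\times\{c\}/\langle\epsilon\rangle=E/\langle\epsilon\rangle$, and this quotient is again an elliptic curve because $\epsilon$ acts on $E$ by a $2$-torsion translation. The multiplicity of this fibre is $2$. To see this, note that $q^*(\text{local parameter at }b_i)$ vanishes to order $2$ at $c$, and the diagonal action is free. So $\langle\epsilon\rangle$ acts freely on $E\times\Delta$ (with $\Delta$ a disc around $c$), and the map down to the disc around $b_i$ is $t\mapsto t^2$ on the $\Delta$-factor. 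Thus $f^*b_i=2R_i$ with $R_i\cong E/\langle\epsilon\rangle$.

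For $g$: since $C'\to B'$ is étale, every stabilizer is trivial. Hence every fibre of $g$ is isomorphic to $E$, so $g$ is smooth and isotrivial.

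\emph{Splitting of $\Omega^1_X$.} We have $\Omega^1_{E\times C}=\mathscr{O}\,dz\oplus \pr_C^*\omega_C$, where $dz$ is the invariant form on $E$. The group $G$ acts on $E$ by translations, so $dz$ is $G$-invariant and descends to a nowhere vanishing form $\vartheta$. The decomposition is $G$-equivariant, so it descends to $\Omega^1_X\cong\mathscr{O}_X\vartheta\oplus\mathscr{M}$ for a line bundle $\mathscr{M}$ descending $\pr_C^*\omega_C$ with its natural linearization. Taking determinants gives $\mathscr{M}\cong\omega_X$.

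\emph{The relation $\omega_X^{\otimes2}\cong\mathscr{O}_X(F)$.} Recall that $q^*\mathscr{O}_B(1)\cong\omega_C^{\otimes2}$. To check this compatibly with the linearization, I would argue as follows. The pullback $f^*\mathscr{O}_B(1)$ pulled back to $E\times C$ is $\pr_C^*q^*\mathscr{O}_B(1)$ with the trivial-type linearization, and a section $t$ of $\mathscr{O}_B(1)$ vanishing at a general point $b$ pulls back to an invariant section. Since $q$ is branched exactly where $G$ has nontrivial stabilizers, $q^*\mathscr{O}_B(b_i)$ corresponds to $\mathscr{O}_C(2\cdot\text{stabilized points})$. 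Over such a point the relevant stabilizer acts on a fibre of $\omega_C$ by $-1$ but on a fibre of $\omega_C^{\otimes 2}$ by $+1$, which matches the invariant linearization.

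More cleanly, I would use the canonical bundle formula for elliptic fibrations:
\[
\omega_X\cong f^*(\omega_B\otimes \mathbb{L})\otimes\mathscr{O}_X\Bigl(\textstyle\sum_i R_i\Bigr).
\]
Here $\deg\mathbb L=\chi(\mathscr{O}_X)=0$, because $X$ is an étale quotient of $E\times C$ and $\chi(\mathscr{O}_{E\times C})=0$. This gives
\[
\omega_X^{\otimes2}\cong f^*\mathscr{O}_B(-4)\otimes\mathscr{O}_X\Bigl(\textstyle\sum 2R_i\Bigr)\cong f^*\mathscr{O}_B(-4+5)=\mathscr{O}_X(F).
\]
Here I use $2R_i=f^*b_i$ and the fact that $\mathbb L$, being a degree $0$ line bundle on $\mathbb P^1$, is trivial.

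The main obstacle is the linearization bookkeeping in the step identifying $\mathscr{M}$ with $\omega_X$ and its square. Invoking the canonical bundle formula sidesteps most of it, and this is what I would try first.
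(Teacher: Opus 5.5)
Your proof is correct, and for the fibres of $f$ and $g$ and the descended splitting $\Omega^1_X\cong\mathscr{O}_X\vartheta\oplus\mathscr{M}$ with $\mathscr{M}\cong\omega_X$ it follows the paper. The determinant step needs no linearization bookkeeping, so the obstacle you flag there does not arise.

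Where you differ is the relation $\omega_X^{\otimes2}\cong\mathscr{O}_X(F)$.

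\emph{The paper's route.} It stays inside the splitting. It observes that the second summand $\mathscr{A}$ is the saturation of $df(f^*\Omega^1_B)$ in $\Omega^1_X$. Since $f$ looks locally like $t=s^2$ along $R_i$, the image of $df$ vanishes to order exactly one along each $R_i$. This gives $\mathscr{A}\cong f^*\Omega^1_B(\sum_i R_i)$ directly, and the claim follows from $f^*\Omega^1_B\cong\mathscr{O}_X(-2F)$ and $2R_i\sim F$.

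\emph{Your route.} You quote Kodaira's canonical bundle formula and kill the term $\mathbb{L}=(R^1f_*\mathscr{O}_X)^\vee$ using $\deg\mathbb{L}=\chi(\mathscr{O}_X)=\chi(\mathscr{O}_{E\times C})/4=0$ on $B\cong\mathbb{P}^1$. This reaches the same formula $\omega_X\cong f^*\omega_B(\sum_i R_i)$ and does not use the splitting at all. It does rest on a heavier black box, so you should state two inputs explicitly:
\begin{itemize}
\item $f$ is relatively minimal, since every fibre is $E$ or $2R_i$ with $R_i$ elliptic, so no fibre contains a rational curve;
\item $\deg\mathbb{L}=\chi(\mathscr{O}_X)$ uses the absence of wild fibres in characteristic zero.
\end{itemize}
The paper's argument is in effect a hands-on proof of the canonical bundle formula in this special case, with the triviality of $\mathbb{L}$ coming for free from the saturation description.

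The equivariant sketch you abandoned also closes quickly. The natural $G$-linearizations of $\omega_C^{\otimes2}$ and $q^*\mathscr{O}_B(1)$ differ by a character of $G$. That character is trivial, because each $\epsilon_j$ fixes points of $C$ at which it acts by $+1$ on both fibres. Descending along $E\times C\to X$ then gives $\omega_X^{\otimes2}\cong f^*\mathscr{O}_B(1)=\mathscr{O}_X(F)$.
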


\begin{proof}
The quotient map $E\times C\rightarrow X$ is étale. Over a branch point $b_i$, the inertia group of $C\rightarrow B$ has order two and acts on $E$ by translation. It follows that the fiber of $f$ over $b_i$ is twice a smooth elliptic curve $R_i\cong E/\langle\epsilon_i\rangle$. These are the only multiple fibers. Since $C'\rightarrow B'$ is étale, the associated fibration $g\colon Y\rightarrow B'$ is smooth. Let $\vartheta_E$ be a nonzero translation-invariant $1$-form on $E$. The $G$-equivariant decomposition 
\[
\Omega_{E\times C}^1\cong \mathscr{O}_{E\times C}\vartheta_E\oplus\pr_C^*\omega_C
\]
descends to a decomposition $\Omega_X^1\cong \mathscr{O}_X\vartheta\oplus\mathscr A$, where $\vartheta$ is nowhere vanishing. Taking determinants identifies $\mathscr A$ with $\omega_X$. Note that $\mathscr{A}$ is actually the saturation of the image of the cotangent map $df\colon f^*\Omega_B^1\rightarrow \Omega_X^1$, which then implies that 
\[
\mathscr{A}\cong f^*\Omega_B^1\biggl(\sum_{i=1}^5 R_i\biggr),
\]
and the required result then follows as $f^*\Omega_B^1\cong \mathscr{O}_X(-2F)$ and $2R_i\sim F$ for any $1\leq i\leq 5$.
\end{proof}

Let $F_0\coloneqq f^{-1}(b_0)$, which is a smooth reduced fiber of $f$.

\begin{lemma}\label{lem:canonical-double-cover}
The morphism $\pi\colon Y\rightarrow X$ is the cyclic double cover associated with $\omega_X$ and a section $a\in H^0(X,\omega_X^{\otimes 2})$ whose zero divisor is $F_0$. Equivalently,
\[
\pi_*\mathscr{O}_Y\cong \mathscr{O}_X\oplus\omega_X^{-1}.
\]
\end{lemma}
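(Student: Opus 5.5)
The plan is to work $G$-equivariantly on the étale cover $E\times C\to X$ and descend at the end. The cover $r\colon C'\to C$ is by construction the cyclic double cover attached to the pair $(\omega_C,a_C)$, with $a_C=q^*t_0\in H^0(C,\omega_C^{\otimes2})$. Taking the product with $E$ shows that $\operatorname{id}_E\times r\colon E\times C'\to E\times C$ is the cyclic double cover attached to $(\operatorname{pr}_C^*\omega_C,\operatorname{pr}_C^*a_C)$. Concretely, $E\times C'$ is the closed subscheme of $\Tot(\operatorname{pr}_C^*\omega_C)$ cut out by $\eta^2=\operatorname{pr}_C^*a_C$, and
\[
(\operatorname{id}_E\times r)_*\mathscr{O}_{E\times C'}\cong\mathscr{O}_{E\times C}\oplus\operatorname{pr}_C^*\omega_C^{-1}.
\]
The $G$-action on $C'$ was defined through the natural $G$-linearization of $\omega_C$. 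Hence the $G$-action on $E\times C'$ is the restriction of the induced action on $\Tot(\operatorname{pr}_C^*\omega_C)$, and this action preserves the equation once we check that $\operatorname{pr}_C^*a_C$ is a $G$-invariant section.

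To check invariance, recall from the construction that $q^*\mathscr{O}_B(1)\cong\omega_C^{\otimes2}$. I will verify that this isomorphism is $G$-equivariant for the natural linearizations on both sides, so that $a_C=q^*t_0$ is invariant. Since $G$ is finite and $C$ is a connected projective curve, the only possible discrepancy is a character of $G$. I would rule it out by evaluating the action at a $G$-fixed point of some $\epsilon_i$. At a ramification point of index $2$, the element $\epsilon_i$ acts by $(-1)^2=1$ on $\omega_C^{\otimes2}$, and it acts trivially on the pullback of $\mathscr{O}_B(1)$. This forces the character to be trivial on each $\epsilon_i$, hence on $G$.

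Granting invariance, the section $\operatorname{pr}_C^*a_C$ descends along the étale quotient $\rho\colon E\times C\to X$. The line bundle $\operatorname{pr}_C^*\omega_C$ descends to the summand $\mathscr A$, which Lemma~\ref{lem:geometry-of-quotient-surface} identifies with $\omega_X$. So we obtain $a\in H^0(X,\omega_X^{\otimes2})$ with $\rho^*a=\operatorname{pr}_C^*a_C$. Its zero divisor pulls back to $E\times q^{-1}(b_0)$, which is reduced because $b_0$ is not a branch point. Therefore $\operatorname{div}(a)=f^{-1}(b_0)=F_0$, using that $F_0$ is a smooth reduced fiber.

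By étale descent, the quotient of the $G$-equivariant double cover $E\times C'\subset\Tot(\operatorname{pr}_C^*\omega_C)$ is the double cover $\{\eta^2=a\}\subset\Tot(\omega_X)$, and this quotient is exactly $Y\to X$. Likewise the pushforward decomposition descends, giving $\pi_*\mathscr{O}_Y\cong\mathscr{O}_X\oplus\omega_X^{-1}$. The step I expect to be the main obstacle is the equivariance and sign bookkeeping: both the invariance of $a_C$ and the compatibility of the descended $\operatorname{pr}_C^*\omega_C$ with $\omega_X$ from Lemma~\ref{lem:geometry-of-quotient-surface}. For the latter, I would argue that both descents come from the same $G$-equivariant inclusion $\operatorname{pr}_C^*\omega_C\hookrightarrow\Omega^1_{E\times C}$.
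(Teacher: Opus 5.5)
Your proposal is correct and follows essentially the paper's proof. The paper also pulls back along the étale quotient $\rho$, uses the $G$-equivariant identification $\rho^*\omega_X\cong\pr_C^*\omega_C$ coming from the translation-invariant form on $E$ (equivalently, your determinant identification via Lemma~\ref{lem:geometry-of-quotient-surface}), descends the cover and the section, and reads off $\operatorname{div}(a)=F_0$ from $E\times q^{-1}(b_0)$. Your character comparison at fixed points showing that $a_C$ is $G$-invariant fills in a step the paper leaves implicit when it lifts the $G$-action to $C'$.
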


\begin{proof}
Let $\rho\colon E\times C\rightarrow X$ be the quotient map. After pullback
along $\rho$, the double cover $\pi\colon Y\rightarrow X$ becomes
$E\times C'\rightarrow E\times C$, which is defined by the line bundle
$\pr_C^*\omega_C$ and the section $\pr_C^*a_C$. On the other hand, the
translation-invariant trivialization of $\omega_E$ gives a $G$-equivariant
identification $\rho^*\omega_X\cong\pr_C^*\omega_C$. Equivariant descent
therefore shows that $\pi$ is defined by $\omega_X$.

The zero divisor of $\pr_C^*a_C$ is $E\times q^{-1}(b_0)$, whose image in
$X$ is the smooth fiber $F_0$. Hence the descended section
$a\in H^0(X,\omega_X^{\otimes2})$ has divisor $F_0$, and
$\pi_*\mathscr{O}_Y\cong \mathscr{O}_X\oplus\omega_X^{-1}$.
\end{proof}

\subsubsection{Construction of the polarization $H$}

We recall the period and index of a genus one curve. Let $T$ be a smooth projective geometrically integral curve of genus one over a field $\mathbb{K}$, and let $J\coloneqq \Pic^0_{T/\mathbb{K}}$ be its Jacobian. The curve $T$ is a torsor under $J$, and its \emph{period} $\operatorname{per}(T)$ is the order of its class $[T]\in H^1(\mathbb{K},J)$. Equivalently, $\operatorname{per}(T)$ is the least positive integer $d$ for which $\Pic^d_{T/\mathbb{K}}(\mathbb{K})$ is nonempty. The \emph{index} $\operatorname{ind}(T)$ is the greatest common divisor of the degrees of the closed points of $T$, or equivalently the positive generator of the image of $\deg\colon\mathrm{CH}_0(T)\rightarrow\mathbb Z$. These equivalent descriptions are recalled in \cite[Definitions~1.1--1.2 and Remark~2.2]{CiperianiKrashen2012}. One always has $\operatorname{per}(T)\mid\operatorname{ind}(T)$ by \cite[p.~669, Proposition~5]{LangTate1958}.

The relation between these invariants is controlled by the Brauer group. The Leray spectral sequence for $\mathbb G_m$ gives an exact sequence
\[0\rightarrow\Pic(T)\rightarrow\Pic_{T/\mathbb{K}}(\mathbb{K})\rightarrow\operatorname{Br}(\mathbb{K})\rightarrow\operatorname{Br}(T),\]
see \cite[Chapter~8, \S1, Proposition~4]{BoschLutkebohmertRaynaud1990} or \cite[Theorem~2.1]{CiperianiKrashen2012}.

If $\mathbb{K}=\C(B)$, then $\operatorname{Br}(\mathbb{K})=0$ by Tsen's theorem \cite[Proposition~6.2.3 and Theorem~6.2.8]{GilleSzamuely2017}. Consequently, we have that $\operatorname{per}(T)=\operatorname{ind}(T)$, so that $T$ has a closed point of degree $\operatorname{per}(T)$ \cite[p.~670]{LangTate1958}.
The closure of the closed point in a proper model over $B$ is an integral multisection of degree $\operatorname{per}(T)$.

\begin{lemma}\label{lem:bisection-and-polarization}
Let $\mathbb{K}=\C(B)$, and let $X_{\mathbb{K}}$ be the generic fiber of $f\colon X\rightarrow B$. Then $X_{\mathbb{K}}$ has period and index two. Consequently, $f$ admits a bisection $D$. For every sufficiently large integer $m$, the divisor $H\coloneqq D+mF$ is ample and satisfies $H\cdot F=2$ and $K_X\cdot H=1$.
\end{lemma}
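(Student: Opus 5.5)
The plan has three parts: bound the index by two, show the period is not one, and then produce the bisection and compute intersection numbers.

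\emph{Index at most two.} The fibres of $f$ over $b_1,\ldots,b_5$ are multiple fibres of multiplicity two by Lemma~\ref{lem:geometry-of-quotient-surface}. I would produce a closed point of degree two on $X_{\mathbb K}$ directly. The image in $X$ of $\{e\}\times C$ is a curve $\Sigma$, and the map $\{e\}\times C\to X$ has degree equal to the order of the stabilizer of the curve $\{e\}\times C$ under $G$. That stabilizer is trivial, since $G$ acts freely on $E$ by translations. Hence $\Sigma\to B$ has degree $\deg q=4$, which only gives index dividing four. To do better I would take instead the image of $\{e\}\times C\cup\{e+\epsilon_1\}\times C$ modulo a subgroup. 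More cleanly, consider $(E\times C)/\langle\epsilon_1\rangle\to X$ and the image of $\{e\}\times C$ there. This gives a curve over $C/\langle\epsilon_1\rangle\cong\mathbb P^1$, which maps to $B$ with degree two, so its image in $X$ has degree two over $B$. If this bookkeeping fails, I fall back on the following. The index divides every multiplicity relation; by Tsen's theorem $\operatorname{per}=\operatorname{ind}$; and the period divides four. The multiplicities then force the period to be two, because a multisection of odd degree $d$ would meet $2R_i$ with even intersection $d=D\cdot F=2D\cdot R_i$, which is impossible.

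\emph{Period not one.} By the same parity observation, $F=2R_i$, so every curve has even intersection with $F$. Therefore $X_{\mathbb K}$ has no point of odd degree, and the index is not one. Combined with $\operatorname{per}=\operatorname{ind}$ (Tsen) and the previous step, the period and the index are both two. The closed point of degree two then closes up to an integral bisection $D$ with $D\cdot F=2$.

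\emph{Ampleness and intersection numbers.} Put $H=D+mF$. Then $H\cdot F=D\cdot F=2$. By Lemma~\ref{lem:geometry-of-quotient-surface}, $K_X\equiv\tfrac12 F$ numerically, so
\[
K_X\cdot H=\tfrac12 F\cdot(D+mF)=\tfrac12\cdot 2=1.
\]
For ampleness I use Nakai--Moishezon. First, $H^2=D^2+4m>0$ for $m\gg0$. Next, let $\Gamma$ be an irreducible curve. If $\Gamma$ is horizontal, then $F\cdot\Gamma>0$, so $H\cdot\Gamma\geq D\cdot\Gamma+m$. If $\Gamma\neq D$, then $D\cdot\Gamma\geq0$; if $\Gamma=D$, then $H\cdot D=D^2+2m>0$ for large $m$. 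If $\Gamma$ is vertical, then $\Gamma$ is a fibre component, which here is a fibre or some $R_i$, since all fibres are irreducible. In that case $H\cdot\Gamma=D\cdot\Gamma>0$, because $D$ is horizontal and meets every fibre.

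The main obstacle is the index computation, that is, exhibiting the degree-two multisection without miscounting stabilizers. I would handle it via the fibre-multiplicity parity argument plus Tsen's theorem, which avoids explicit construction: the period divides the order of $[X_{\mathbb K}]$, and this order is killed by $2$ because the torsor splits over the quadratic extension $\C(C/\langle\epsilon_1\rangle)$, on which $\{e\}\times C$ gives a rational point.
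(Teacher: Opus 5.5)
\textbf{There is a genuine gap in the upper bound} $\operatorname{ind}(X_{\mathbb K})\le 2$ (equivalently $\operatorname{per}(X_{\mathbb K})\mid 2$). None of your three attempts establishes it.

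\emph{The explicit construction miscounts.} The image of $\{e\}\times C$ in $(E\times C)/\langle\epsilon_1\rangle$ is a bisection over $C/\langle\epsilon_1\rangle$. But $C/\langle\epsilon_1\rangle\to B$ itself has degree two, so the image in $X$ has degree four over $B$. That image is just your $\Sigma$ again.

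\emph{The fallback is too weak.} It only shows that the index is even. Together with $\Sigma$ this leaves $\operatorname{ind}=\operatorname{per}\in\{2,4\}$, and the parity argument cannot rule out $4$.

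\emph{The claimed quadratic splitting is false.} The base change of $X_{\mathbb K}$ to $\mathbb K'=\C(C/\langle\epsilon_1\rangle)$ is the generic fibre of $(E\times C)/\langle\epsilon_1\rangle\to C/\langle\epsilon_1\rangle$. Since $\epsilon_1$ translates $E$ and fixes the six Weierstrass points of $C$, this fibration has double fibres $2\bigl(E/\langle\epsilon_1\rangle\bigr)$ over those points. By your own parity argument it therefore has no section, so there is no $\mathbb K'$-point. The curve $\{e\}\times C$ only gives a rational point over $\C(C)$, which has degree four over $\mathbb K$.

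\textbf{The missing idea} is the one the paper uses. Let $P=C\times_B\operatorname{Spec}\mathbb K$ be the generic $G$-torsor. Then $X_{\mathbb K}\cong P\times^G E$ is obtained from $P$ by extending the structure group along $G=E[2]\hookrightarrow E$. Hence $[X_{\mathbb K}]$ lies in the image of $H^1(\mathbb K,E[2])\to H^1(\mathbb K,E)$ and is killed by $2$.

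An equivalent way to see this: the $G$-invariant map $[2]\circ\operatorname{pr}_E$ descends to a morphism $X\to E$. This morphism trivializes the torsor $X_{\mathbb K}/E[2]$, whose class is $2[X_{\mathbb K}]$. The paper uses the same morphism in the proof of Lemma~\ref{lem:parity-on-double-cover}.

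Once $\operatorname{per}\mid 2$ is known, the rest of your proposal goes through:
\begin{itemize}
\item Your parity step ($F\sim 2R_i$ forbids odd-degree multisections) is a slight strengthening of the paper's no-section argument. With Tsen's theorem it gives $\operatorname{per}=\operatorname{ind}=2$.
\item Your computations $H\cdot F=2$ and $K_X\cdot H=1$ agree with the paper.
\item Your Nakai--Moishezon verification of ampleness is a valid substitute for the paper's observation that $D$ is $f$-ample because all fibres of $f$ are irreducible.
\end{itemize}
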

\begin{proof}
Let $P\coloneqq C\times_B\Spec \mathbb{K}$. Then $P$ is a $G$-torsor over $\mathbb{K}$, and the generic fiber $X_{\mathbb{K}}$ is the contracted product $P\times^G E$, where $G=E[2]$ acts on $E$ by translations. Thus the class of $X_{\mathbb{K}}$ in $H^1(\mathbb{K},E)$ lies in the image of $H^1(\mathbb{K},E[2])$ and is therefore killed by two.

This class is nontrivial. Indeed, if $X_{\mathbb{K}}$ had a $\mathbb{K}$-rational point, then properness would extend it to a section $S$ of $f$. Such a section satisfies $S\cdot F=1$. On the other hand, each multiple fiber has the form $2R_i\sim F$, which would give $1=S\cdot F=2S\cdot R_i$, a contradiction. Hence $X_{\mathbb{K}}$ has period two. By the period--index discussion above, it also has index two, and therefore possesses a closed point of degree two. Its closure in $X$ is an integral bisection $D$ of $f$.

Every reduced fiber of $f$ is irreducible, and $D$ has positive degree on each of them. Thus $D$ is relatively ample over $B$. It follows that $H\coloneqq D+mF$ is ample for every sufficiently large integer $m$. Since $F^2=0$ and $D\cdot F=2$, we have $H\cdot F=2$. Finally, Lemma~\ref{lem:geometry-of-quotient-surface} gives $2K_X\sim F$, and hence $K_X\cdot H=1$.
\end{proof}

\subsection{Proof of Theorem~\ref{thm:polystable-counterexample}}

Now we apply Proposition \ref{prop:parity-obstruction} to the polarized surface $(X,H)$ constructed above. First, we recall the following commutative diagram: 
\[
\begin{tikzcd}[column sep=large,row sep=large]
Y=(E\times C')/G \arrow[r,"\pi"] \arrow[d,"g"']
  & X=(E\times C)/G \arrow[d,"f"] \\
B' \arrow[r,"\beta"']
  & B .
\end{tikzcd}
\]
Now we prove that the polarization $H$ satisfies the hypothesis of Proposition \ref{prop:parity-obstruction}.

\begin{lemma}\label{lem:parity-on-double-cover}
Let $F_Y$ denote the numerical class of a fiber of $g\colon Y\rightarrow B'$, and put $P\coloneqq \pi^*H$. Then $\operatorname{Num}(Y)=\mathbb Z[F_Y]\oplus\mathbb Z[P]$. Consequently, $c_1(\mathscr{L})\cdot\pi^*H$ is even for every line bundle $\mathscr{L}$ on $Y$.
\end{lemma}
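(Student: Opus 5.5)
The plan is to compute $\operatorname{Num}(Y)$ in two stages: first find its rank, then pin down the integral lattice using the fibration $g\colon Y\to B'$. The parity statement then follows from a direct intersection computation.

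\emph{Rank.} The quotient $\rho'\colon E\times C'\to Y$ is a free $G$-quotient. Hence $\rho'^*$ identifies $\operatorname{Num}(Y)_{\mathbb Q}$ with $\operatorname{Num}(E\times C')^G_{\mathbb Q}$. By the classical description of divisor classes on a product of curves,
\[
\operatorname{NS}(E\times C')\cong\mathbb Z[\{e\}\times C']\oplus\mathbb Z[E\times\{c'\}]\oplus\operatorname{Hom}(J(C'),E),
\]
and the last summand vanishes by the choice of $E$. So $\operatorname{Num}(Y)$ has rank two. The classes $F_Y$ and $P$ are independent: $F_Y^2=0$, while $P^2=2H^2>0$. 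Both classes are characterized below by their intersection numbers with $F_Y$.

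\emph{Intersection numbers.} For general $b\in B$, the fibre $\beta^{-1}(b)$ consists of two points. Hence $\pi^*F\equiv 2F_Y$, and therefore
\[
P\cdot F_Y=\tfrac12\,\pi^*H\cdot\pi^*F=H\cdot F=2 .
\]
The intersection matrix of $(F_Y,P)$ is thus $\begin{pmatrix}0&2\\2&2H^2\end{pmatrix}$. Once $\operatorname{Num}(Y)=\mathbb Z F_Y\oplus\mathbb Z P$ is established, any $\mathscr L$ with $c_1(\mathscr L)\equiv aF_Y+bP$ satisfies
\[
c_1(\mathscr L)\cdot P=2a+2bH^2,
\]
which is even. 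This is the consequence claimed in the lemma.

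\emph{Integrality.} Let $\ell\in\operatorname{Num}(Y)$, and write $\ell\equiv aF_Y+bP$ with $a,b\in\mathbb Q$. Then $b=\ell\cdot F_Y/2$. I would show $\ell\cdot F_Y$ is even by proving that the generic fibre $Y_{K'}$, with $K'=\mathbb C(B')$, has index two, following the proof of Lemma~\ref{lem:bisection-and-polarization}.
\begin{itemize}
\item $Y_{K'}$ is the contracted product of the $G$-torsor $C'_{K'}$ with $E$. So its class in $H^1(K',E)$ is killed by $2$.
\item This class is nonzero. A section of $g$ would lift to a $G$-equivariant morphism $C'\to E$, with $G$ acting on $E$ by nontrivial translations. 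Since $\operatorname{Hom}(J(C'),E)=0$, every morphism $C'\to E$ is constant, and a constant map cannot be equivariant for free translations.
\item So the period is $2$. By Tsen's theorem and the period--index discussion preceding Lemma~\ref{lem:bisection-and-polarization}, the index is also $2$.
\end{itemize}
Every multisection of $g$, and hence every class $\ell$, then has even degree $\ell\cdot F_Y$, so $b\in\mathbb Z$.

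\emph{Main obstacle.} Replacing $\ell$ by $\ell-bP$, it remains to treat a class with $\ell\cdot F_Y=0$ and $\ell\equiv aF_Y$, and to show $a\in\mathbb Z$. I expect this vertical integrality to be the delicate step.
\begin{itemize}
\item My first attempt: $g$ is smooth with all fibres reduced and irreducible. So by Zariski's lemma, the vertical part of $\operatorname{NS}$ modulo torsion is generated by $F_Y$.
\item To reduce a general class with $\ell\cdot F_Y=0$ to a vertical one, I would restrict to $Y_{K'}$. This gives a degree-zero class in $\operatorname{Pic}^0(Y_{K'})$.
\item The Mordell--Weil-type group $\operatorname{Pic}^0_{Y_{K'}/K'}(K')$ should be torsion. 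The reason is that $\operatorname{Hom}(J(B'),E)$-type contributions vanish: such maps would pull back to nonconstant maps $J(C')\to E$.
\item Therefore a multiple $k\ell$ is vertical, so $k\ell\equiv jF_Y$ for some integer $j$. This gives $a=j/k\in\mathbb Q$; to get $a\in\mathbb Z$, I would intersect with $P$ and compare with $\rho'^*$. Concretely, $\rho'^*F_Y\equiv 4[E\times\{c'\}]$, and $\rho'^*\ell$ must be integral, so $4a\in\mathbb Z$.
\item The remaining $2$-power denominator is then excluded by the index-two argument applied to $\ell+P$ or $\ell+F_Y'$-type combinations.
\end{itemize}
If this last bookkeeping fails, the fallback is to note that the parity conclusion only needs $2a\in\mathbb Z$. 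That would follow from $4a\in\mathbb Z$ together with $\ell\cdot P=2a+2bH^2$ being an integer.
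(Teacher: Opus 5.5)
Your rank computation, the value $P\cdot F_Y=H\cdot F=2$, and the index-two argument giving $b\in\mathbb Z$ are correct and agree with the paper. The gap is the step you single out as the main obstacle. You need to show that a class with $\ell\cdot F_Y=0$ lies in $\mathbb Z F_Y$, not merely in $\tfrac14\mathbb Z F_Y$, and none of your three proposed resolutions does this.

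(i) $\operatorname{Pic}^0_{Y_{K'}/K'}(K')$ is not torsion. The Jacobian of the $E$-torsor $Y_{K'}$ is the constant curve $E_{K'}$, so this group is $E(K')=\operatorname{Mor}(B',E)$. The vanishing of $\operatorname{Hom}(J(B'),E)$ only removes the nonconstant maps; it leaves the non-torsion group $E(\mathbb C)$ of constant points. Even a correct argument that some multiple $k\ell$ is vertical would only give $ka\in\mathbb Z$, which you already have from the rank computation. Passing to multiples discards exactly the integrality at stake.

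(ii) The index-two argument cannot clear the denominator in $4a\in\mathbb Z$. It only constrains intersection numbers with $F_Y$, and since $F_Y^2=0$ these numbers do not see $a$.

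(iii) The fallback is false. With $b\in\mathbb Z$, the number $c_1(\mathscr L)\cdot P=2a+2bH^2$ has the parity of $2a$, so evenness is equivalent to $a\in\mathbb Z$. The condition $2a\in\mathbb Z$ holds automatically and says nothing about parity: a class $\tfrac12F_Y$ would meet $P$ in $1$.

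The missing idea is to make $\mathscr L$ itself, not a multiple, vertical after a numerically trivial twist, and then use that $g$ has no multiple fibres. The paper does this as follows.
\begin{enumerate}
\item By seesaw and $\operatorname{Hom}(J(C'),E)=0$, the restrictions of $\rho'^*\mathscr L$ to all curves $E\times\{c\}$ equal a single $\lambda\in\operatorname{Pic}^0(E)$.
\item The $G$-invariant map $[2]\circ\operatorname{pr}_E$ descends to $u\colon Y\to E$. Choose $\mathscr M\in\operatorname{Pic}^0(E)$ with $\lambda\otimes[2]^*\mathscr M\cong\mathscr O_E$. Then $\mathscr L\otimes u^*\mathscr M$ is trivial on the generic fibre of $g$, hence equals $\mathscr O_Y(V)$ for a vertical divisor $V$.
\item Since $u^*\mathscr M$ is numerically trivial and every fibre of $g$ is smooth and connected, $V$ is an integral combination of fibres. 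So $\ell\equiv V\in\mathbb Z F_Y$.
\end{enumerate}

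If you prefer to keep your $\rho'^*$ bookkeeping, an equivariant version also works. Write $\rho'^*\mathscr L\cong\operatorname{pr}_E^*\mathscr P\otimes\operatorname{pr}_{C'}^*\mathscr Q$ with $\mathscr P\in\operatorname{Pic}^0(E)$. Since $\mathscr P$ has degree zero, its theta group is commutative, and its restriction to the finite group $G$ splits because $\mathbb C^\times$ is divisible. So $\mathscr P$ admits a $G$-linearization. Then $\operatorname{pr}_{C'}^*\mathscr Q$, and hence $\mathscr Q$, inherits one, and $\mathscr Q$ descends along the free quotient $C'\to B'$. This forces $4\mid\deg\mathscr Q=4a$, i.e.\ $a\in\mathbb Z$.
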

\begin{proof}
Set $\mathbb{K}'\coloneqq \C(B')$, and let $T\coloneqq Y_{\mathbb{K}'}$ be the generic fiber of $g$. Since $T$ is obtained from the generic $G$-torsor of $C'\rightarrow B'$ by extension of structure group along $G=E[2]\hookrightarrow E$, its class in $H^1(\mathbb{K}',E)$ is killed by two. This class is nontrivial. Indeed, a $\mathbb{K}'$-point of $T$ would extend by properness to a section of $g$. After pullback to $C'$, such a section would give a $G$-equivariant morphism $C'\rightarrow E$. The assumption $\operatorname{Hom}(J(C'),E)=0$ forces every morphism $C'\rightarrow E$ to be constant, which is incompatible with the translation action of $G$. Thus $T$ has period two and, by the period--index discussion above, index two.

Consider the homomorphism $\delta\colon\operatorname{Num}(Y)\rightarrow\mathbb Z$, given by $\delta([\mathscr L])=c_1(\mathscr L)\cdot F_Y$. Since $T$ has index two, $\operatorname{im}(\delta)\subseteq2\mathbb Z$. On the other hand, $\pi$ identifies a general fiber of $g$ with a general fiber of $f$. Hence Lemma~\ref{lem:bisection-and-polarization} gives $\delta(P)=P\cdot F_Y=H\cdot F=2$, and therefore $\operatorname{im}(\delta)=2\mathbb Z$.

\bigskip
\textbf{Claim.} \textit{$\ker(\delta)=\mathbb Z[F_Y]$.}

\bigskip

Assuming this claim for the time being, let us see how to conclude. The homomorphism $\delta$ has kernel $\mathbb Z[F_Y]$ and image $2\mathbb Z$, while $P$ maps to $2$. It follows that $\operatorname{Num}(Y)=\mathbb Z[F_Y]\oplus\mathbb Z[P]$. Finally, $F_Y\cdot P=2$ and $P^2=(\pi^*H)^2=2H^2$. Hence, writing $c_1(\mathscr{L})=aF_Y+bP$, we obtain $c_1(\mathscr{L})\cdot\pi^*H=2a+2bH^2$, which is even.
\bigskip

\noindent\textit{Proof of the claim.} Let $\rho\colon E\times C'\rightarrow Y$ be the quotient map, and let $\mathscr L$ be a line bundle on $Y$ such that $c_1(\mathscr{L})\cdot F_Y=0$. The restrictions of $\rho^*\mathscr L$ to the curves $E\times\{c\}$ define a morphism $C'\rightarrow\operatorname{Pic}^0(E)\cong E$, which is constant by $\operatorname{Hom}(J(C'),E)=0$; denote its value by $\lambda$. Since $[2]\circ\pr_E\colon E\times C'\rightarrow E$ is $G$-invariant, it descends to a morphism $u\colon Y\rightarrow E$:
\[
\begin{tikzcd}[column sep=large,row sep=large]
E\times C' \arrow[r,"\rho"] \arrow[d,"\pr_E"']
    & Y \arrow[d,"u"] \\
E \arrow[r,"{[2]}"']
    & E
\end{tikzcd}
\]
The surjectivity of $[2]^*$ on $\operatorname{Pic}^0(E)$ allows us to choose $\mathscr M\in\operatorname{Pic}^0(E)$ such that $\lambda\otimes[2]^*\mathscr M$ is trivial. It follows that $\rho^*(\mathscr L\otimes u^*\mathscr M)$ restricts trivially to every $E\times\{c\}$, and hence $\mathscr L\otimes u^*\mathscr M$ is trivial on the generic fiber of $g$. Since $u^*\mathscr M$ is numerically trivial, $\mathscr L$ is numerically equivalent to a vertical divisor. Every fiber of $g$ is smooth and connected, so every vertical divisor is the pullback of a divisor on $B'$. Thus $[\mathscr L]\in\mathbb Z[F_Y]$, proving the claim.
\end{proof}

\begin{proof}[Proof of Theorem~\ref{thm:polystable-counterexample}]
We apply Proposition~\ref{prop:parity-obstruction} to the polarized surface $(X,H)$ constructed above. By Lemma~\ref{lem:geometry-of-quotient-surface}, we have $\Omega_X^1\cong \mathscr{O}_X\vartheta\oplus\omega_X$. Moreover, Lemma~\ref{lem:canonical-double-cover} identifies $\pi\colon Y\rightarrow X$ with the cyclic double cover defined by a section $a\in H^0(X,\omega_X^{\otimes 2})$.

The equation $\eta^2=a$ realizes $Y$ as a degree two spectral cover inside $\Tot(\omega_X)\subset T^*X$. Let $s\in S_X^3$ be the spectral datum whose spectral variety $X_s$ is the disjoint union $\Gamma_\vartheta\sqcup Y$. The two components are disjoint because $\vartheta$ spans the trivial summand of $\Omega_X^1$, whereas $Y$ lies in the summand $\omega_X$. Since $\pi$ is generically \'etale, the datum $s$ is generically regular.

Choose the polarization $H$ given by Lemma~\ref{lem:bisection-and-polarization}. Then $K_X\cdot H=1$, while Lemma~\ref{lem:parity-on-double-cover} shows that $c_1(\mathscr{L})\cdot\pi^*H$ is even for every line bundle $\mathscr{L}$ on $Y$. Thus all the hypotheses of Proposition~\ref{prop:parity-obstruction} are satisfied with $\mathscr A=\omega_X$. It follows that $\mathcal H_{X,H}^{3,\mathrm{ss}}(s)$ is empty. In particular, $\mathcal H_{X,H}^{3,\mathrm{ps}}(s)$ is empty, which proves the theorem.
\end{proof}

\appendix
\section{Equivariant Noether--Lefschetz Theorem}\label{sec: dim 2}\;

The classical Noether--Lefschetz theorem says that for a general surface $S\subset \mathbb{P}^3$ of degree $d\geq 4$, the restriction $\operatorname{Pic}(\mathbb{P}^3)\rightarrow \operatorname{Pic}(S)$ is an isomorphism. Following the arguments from \cite{carlson1983infinitesimal}, we prove an equivariant version for a sufficiently positive line bundle $\mathscr{L}$ on a smooth projective $3$-fold, which is used in the proof of Theorem \ref{thm:sl2-counterexample}.

\subsection{Setup and main results}

Let $\Gamma$ be a finite group acting on a smooth projective variety $Z$ of dimension $2m+1$ with $\pi\colon Z\rightarrow Z/\Gamma$ the quotient map. Let $\mathscr{L}$ be a sufficiently high power of a
$\Gamma$-linearized ample line bundle on $Z$. 
Let $s\in H^0(Z,\mathscr{L})^\Gamma$ define a smooth divisor $i\colon Y\hookrightarrow Z$. Define the ambient part of the middle cohomology by
\[
H_{\mathrm{amb}}^{2m}(Y,\mathbb Q)
\coloneqq 
\operatorname{Im}\left(
i^*\colon H^{2m}(Z,\mathbb Q)\longrightarrow H^{2m}(Y,\mathbb Q)
\right),
\]
and set $H_{\mathrm{amb}}^{p,q}(Y):=H_{\mathrm{amb}}^{2m}(Y,\mathbb C)\cap H^{p,q}(Y)$. Let $Q$ denote the polarization on $H^{2m}(Y,\mathbb Q)$ and define the
variable part by
\[
H_{\mathrm{var}}^{2m}(Y,\mathbb Q)
\coloneqq 
H_{\mathrm{amb}}^{2m}(Y,\mathbb Q)^\perp.
\]

\begin{theorem}[Equivariant Noether--Lefschetz]
	\label{thm: finite group noether lefschetz}
	With the choice of a sufficiently positive $\mathscr{L}$ as above, for a very general $Y\in |\mathscr{L}|^{\Gamma}$,
	\[
	H_{\mathrm{var}}^{2m}(Y,\mathbb Q)
	\cap H^{m,m}(Y)
	=
	0.
	\]
\end{theorem}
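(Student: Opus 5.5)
We follow the infinitesimal strategy of Carlson--Green--Griffiths--Harris, run inside the $\Gamma$-invariant family. Let $V\coloneqq H^0(Z,\mathscr L)^\Gamma$, let $U\subset\mathbb P(V)$ be the open locus of smooth members, and let $\mathcal Y\to U$ be the universal family. Over $U$ the groups $H^{2m}_{\mathrm{var}}(Y_t,\mathbb Q)$ form a local system $\mathbb V$, and the Hodge bundles $F^p$ form a sub-variation of Hodge structure. A class in $H^{2m}_{\mathrm{var}}\cap H^{m,m}$ is a flat section of $\mathbb V$ defined on a countable union of proper analytic subvarieties of $U$ (its Hodge locus), unless that locus is all of $U$. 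Thus a very general $t$ avoids all Hodge loci except those equal to $U$.

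So it suffices to show the following: there is no nonzero class $\lambda\in H^{2m}_{\mathrm{var}}(Y,\mathbb Q)\cap H^{m,m}(Y)$ that stays of type $(m,m)$ under every first-order deformation in $T_tU=V/\mathbb C s$. Such a class satisfies $\overline\nabla_\xi\lambda^{m,m}=0$ in $H^{m-1,m+1}(Y)$ for all $\xi\in V/\mathbb Cs$. Since $\lambda$ is real, this vanishing must hold on all of $H^{m,m}_{\mathrm{var}}$ generically. The plan is then to prove directly that the infinitesimal variation
\[
\overline\nabla\colon H^{m,m}_{\mathrm{var}}(Y)\longrightarrow \operatorname{Hom}\bigl(V/\mathbb C s,\,H^{m-1,m+1}_{\mathrm{var}}(Y)\bigr)
\]
is injective. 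Equivalently, for nonzero $\lambda$ there is some invariant $\xi$ with $\xi\cdot\lambda\neq 0$.

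To compute $\overline\nabla$ we use the Griffiths/Green residue description, valid for $\mathscr L$ sufficiently positive. Let $\mathscr J$ be the Jacobian ideal sheaf of $s$, i.e. the image of the first-order derivatives in the principal parts of $\mathscr L$. Set $R^k\coloneqq H^0(Z,K_Z\otimes\mathscr L^{k})/J$. Then
\[
H^{2m-q,q}_{\mathrm{var}}(Y)\cong R^{q+1}\quad (\text{suitably twisted}),
\]
and $\overline\nabla_\xi$ becomes multiplication by $\xi\in H^0(\mathscr L)$. All of these identifications are $\Gamma$-equivariant because $s$ is invariant. The required injectivity then reduces to a Macaulay-type duality statement: the pairing $R^{a}\times R^{b}\to R^{\mathrm{top}}\cong\mathbb C$ is perfect, and multiplication $R^{m+1}\otimes R^{1}\to R^{m+2}$ is nondegenerate in the sense that $\lambda\cdot R^1=0$ forces $\lambda=0$. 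The latter follows from the perfect pairing once $R^1$ generates in the relevant degrees.

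The main obstacle is the equivariant version of this. We may only multiply by invariant $\xi\in V$, not by all of $H^0(\mathscr L)$. Since $\lambda$ need not be $\Gamma$-invariant, $V\cdot\lambda=0$ does not obviously give $H^0(\mathscr L)\cdot\lambda=0$. We plan to handle this as follows.

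First, decompose $R^{m+1}$ into $\Gamma$-isotypic components; these are preserved by multiplication by $V$. It then suffices to show $V\cdot\lambda_\chi\neq0$ for each isotypic piece $\lambda_\chi\neq 0$.

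Next, use high positivity. Write $\mathscr L=\mathscr L_0^{k}$ with $\mathscr L_0$ linearized, and use that $\pi$ is finite. Then $H^0(\mathscr L)^\Gamma=H^0(Z/\Gamma,\mathscr M)$ for an ample $\mathscr M$ on the quotient, and this separates jets at free orbits. From this we can get invariant products generating $R^{\mathrm{top}}$ against any nonzero isotypic class, by a duality argument in which invariant sections of large degree surject onto $\bigl(H^0(\mathscr L^{j})\otimes\chi\bigr)$-type components via multiplication.

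Concretely, we will show that $V\cdot H^0(\mathscr L^{j})_\chi=H^0(\mathscr L^{j+1})_\chi$ for $j\geq1$. This is a Castelnuovo--Mumford regularity statement for the coherent sheaf $(\pi_*\mathscr L^j)_\chi$ on $Z/\Gamma$. Passing to the Jacobian quotients, $V\cdot\lambda=0$ then gives $H^0(\mathscr L^{\mathrm{top}-m-1})^\Gamma\cdot\lambda=0$. Combining this with the equivariance of the perfect pairing (the top piece $R^{\mathrm{top}}$ carries a one-dimensional character, and invariants pair each $\chi$ with its dual $\chi^{\vee}$), we conclude $\lambda=0$. This step is the one most likely to need care, especially at the points with nontrivial stabilizers.
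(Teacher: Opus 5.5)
Your reduction to an infinitesimal statement is the paper's. So is your isotypic Castelnuovo--Mumford surjectivity $V\cdot H^0(\cdot)_\chi=H^0(\cdot)_\chi$ obtained on $Z/\Gamma$: it is essentially Lemma~\ref{lem: finite group multiplication}, which you need for $\mathscr F=\omega_Z$ rather than $\mathscr O_Z$.

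\textbf{The gap is in your closing step.} Multiplication by a $\Gamma$-invariant section is $\Gamma$-equivariant, so it sends the $\chi$-isotypic part of $R^{m+1}$ into the $\chi$-isotypic part of the target. If $R^{\mathrm{top}}$ is one-dimensional with character $\psi$, then for $\lambda$ of isotypic type $\chi\neq\psi$ the product $H^0(\mathscr L^{\mathrm{top}-m-1})^\Gamma\cdot\lambda$ vanishes automatically. So the implication ``$H^0(\mathscr L^{\mathrm{top}-m-1})^\Gamma\cdot\lambda=0\Rightarrow\lambda=0$'' is false. The perfect pairing pairs $\lambda$ with the isotypic component of type $\chi^\vee\otimes\psi$, which is not the trivial one, and invariants never reach it. Passing from $V\cdot\lambda=0$ to $H^0(\mathscr L^k)^\Gamma\cdot\lambda=0$ uses only the trivial-character case of your surjectivity and discards exactly the information you need.

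\textbf{The fix is to move the invariant multiplication to the other side of the pairing.} Suppose $V\cdot\lambda=0$. Then for every $v\in V$ and every $\mu$ in the complementary degree minus one, $\langle\lambda,v\mu\rangle=\langle v\lambda,\mu\rangle=0$. Your isotypic surjectivity, applied in every isotypic component of the complementary piece, says such products $v\mu$ span everything, so $\lambda=0$. This is precisely the paper's argument, phrased Hodge-theoretically. The residue sequence together with Lemma~\ref{lem: finite group multiplication} gives surjectivity of $T_Y^\Gamma\otimes H^{m+1,m-1}_{\mathrm{var}}(Y)\to H^{m,m}_{\mathrm{var}}(Y)$. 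Then $Q(\gamma,\nabla_\xi\beta)=-Q(\nabla_\xi\gamma,\beta)$ forces $\gamma=0$.

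Working with $Q$ instead of a Jacobian ring also repairs a second problem. For general $Z$, the residue map from the pole-order quotient onto $H^{2m-q,q}_{\mathrm{var}}(Y)$ has kernel $H^{2m+1-q,q}_{\mathrm{prim}}(Z)$. This kernel is nonzero, for example, for the abelian threefolds used in the application. So the isomorphism $H^{2m-q,q}_{\mathrm{var}}(Y)\cong R^{q+1}$ and a Macaulay-type perfect pairing on $R$ are not available as you state them. The perfect pairing you actually have is the polarization on $H_{\mathrm{var}}$.
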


Here, very general means outside a countable union of proper Zariski-closed algebraic subsets. 

\begin{corollary}\label{cor: finite group surface noether lefschetz}
	Assume that $\dim Z=3$. 
	For a very general smooth invariant divisor $Y\in |\mathscr{L}|^\Gamma$, restriction induces an isomorphism of $\mathbb Q[\Gamma]$-modules:
	\[
    \begin{tikzcd}[column sep=large]
        i^*\colon 
	\operatorname{NS}(Z)_{\mathbb Q}  \arrow[r,"{\cong}"]
	& 	\operatorname{NS}(Y)_{\mathbb Q}.
    \end{tikzcd}
	\]
\end{corollary}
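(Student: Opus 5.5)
We deduce the corollary from Theorem~\ref{thm: finite group noether lefschetz} with $m=1$, so that $H^{2m}(Y)=H^2(Y)$ and $H^{m,m}(Y)=H^{1,1}(Y)$. The plan has three steps: injectivity of $i^*$ on $\operatorname{NS}(Z)_{\mathbb Q}$, surjectivity via the orthogonal splitting of $H^2(Y,\mathbb Q)$, and $\Gamma$-equivariance. Throughout, "very general" means we may apply the theorem to $Y$ and also assume $Y$ is smooth and sufficiently ample.

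\emph{Injectivity.} Since $\mathscr{L}$ is ample and $\dim Z=3$, the Lefschetz hyperplane theorem shows that $i^*\colon H^2(Z,\mathbb Q)\to H^2(Y,\mathbb Q)$ is injective. Alternatively, the projection formula gives $i_*i^*\alpha=\alpha\cup c_1(\mathscr L)$, and hard Lefschetz on $Z$ shows this map is injective on $H^2$. The restriction map sends $\operatorname{NS}(Z)_{\mathbb Q}$ into $\operatorname{NS}(Y)_{\mathbb Q}$, because restriction of line bundles is compatible with $c_1$. Hence $i^*$ is injective on Néron--Severi groups.

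\emph{Surjectivity.} The key input is that $Q$ is nondegenerate on $H^2_{\mathrm{amb}}(Y,\mathbb Q)$, which gives the orthogonal splitting
\[
H^2(Y,\mathbb Q)=H^2_{\mathrm{amb}}\oplus H^2_{\mathrm{var}}.
\]
Nondegeneracy follows because $Q(i^*\alpha,i^*\beta)=\int_Z\alpha\cup\beta\cup c_1(\mathscr L)$ is the Lefschetz pairing on $H^2(Z)$. By hard Lefschetz this pairing is nondegenerate, and $i^*$ is injective, so its restriction to the image is nondegenerate.

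Both summands are sub-Hodge structures: $H^2_{\mathrm{amb}}$ is the image of a morphism of Hodge structures, and $H^2_{\mathrm{var}}$ is its $Q$-orthogonal complement. Now take a class $\gamma\in\operatorname{NS}(Y)_{\mathbb Q}=H^2(Y,\mathbb Q)\cap H^{1,1}(Y)$, using Lefschetz $(1,1)$, and write $\gamma=\gamma_a+\gamma_v$. Each component is rational, and each is of type $(1,1)$ because the splitting respects Hodge decompositions. Theorem~\ref{thm: finite group noether lefschetz} then forces $\gamma_v=0$, so $\gamma=i^*\alpha$ for some $\alpha\in H^2(Z,\mathbb Q)$.

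It remains to show that $\alpha$ is of type $(1,1)$, so that $\alpha\in\operatorname{NS}(Z)_{\mathbb Q}$ by Lefschetz $(1,1)$ on $Z$. This holds because $i^*$ is an injective morphism of Hodge structures, so the $(2,0)$ and $(0,2)$ components of $\alpha$ restrict to those of $\gamma$, which vanish.

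\emph{Equivariance.} Since $Y$ is $\Gamma$-invariant, $i$ is $\Gamma$-equivariant, and therefore $i^*$ commutes with the induced $\Gamma$-actions on cohomology and on Néron--Severi groups. Thus the isomorphism is one of $\mathbb Q[\Gamma]$-modules.

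The only substantive point is the orthogonal decomposition: rationality and Hodge-compatibility of the projection onto $H^2_{\mathrm{var}}$, which rest on the nondegeneracy of $Q$ on the ambient part. We handle this via hard Lefschetz on $Z$ as above. Everything else is formal once the theorem is granted.
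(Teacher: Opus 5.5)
Your proposal is correct and follows essentially the same route as the paper: Lefschetz $(1,1)$ combined with the equivariant Noether--Lefschetz theorem identifies $\operatorname{NS}(Y)_{\mathbb Q}$ with $H^2_{\mathrm{amb}}(Y,\mathbb Q)\cap H^{1,1}(Y)$. Weak Lefschetz injectivity of $i^*$, which is a morphism of Hodge structures, together with Lefschetz $(1,1)$ on $Z$ then gives the isomorphism, and $\Gamma$-equivariance is formal. The only addition is that you justify explicitly, via hard Lefschetz on $Z$, the orthogonal splitting $H^2(Y,\mathbb Q)=H^2_{\mathrm{amb}}\oplus H^2_{\mathrm{var}}$ into sub-Hodge structures, which the paper uses implicitly in its first step.
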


\begin{proof}
	Combining the Lefschetz $(1,1)$-theorem and Theorem~\ref{thm: finite group noether lefschetz}, we have
	\[
	\operatorname{NS}(Y)_{\mathbb Q}
	=
	H_{\mathrm{amb}}^2(Y,\mathbb Q)\cap H^{1,1}(Y).
	\]
	
	By weak Lefschetz, the restriction map $i^*:H^2(Z,\mathbb Q)\rightarrow H^2(Y,\mathbb Q)$
	is injective and a morphism of Hodge structures.  Applying the Lefschetz $(1,1)$-theorem on $Z$, we obtain the desired isomorphism.
	All the maps are $\Gamma$-equivariant, so the resulting isomorphism
	is an isomorphism of $\mathbb Q[\Gamma]$-modules.
\end{proof}

\subsection{Proof of Theorem \ref{thm: finite group noether lefschetz}}

To prove Theorem \ref{thm: finite group noether lefschetz}, we start with the following lemma.

\begin{lemma}
\label{lem: finite group multiplication}
	Let $\mathscr{F}$ be a $\Gamma$-linearized coherent sheaf on $Z$. After replacing
	$\mathscr{L}$ by a sufficiently high power, the multiplication maps
	\[
	H^0(Z,\mathscr{L})^\Gamma\otimes
	H^0\bigl(Z,\mathscr{F}\otimes \mathscr{L}^{q+1}\bigr)
	\longrightarrow
	H^0\bigl(Z,\mathscr{F}\otimes \mathscr{L}^{q+2}\bigr)
	\]
	are surjective for every $q\geq 0$.
\end{lemma}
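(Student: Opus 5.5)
The plan is to reduce the surjectivity of the equivariant multiplication map to the classical, non-equivariant surjectivity of multiplication maps for sufficiently positive line bundles, using a Reynolds-operator trick. The catch is that averaging a product $s\cdot t$ over $\Gamma$ does not land back in the span of products with $s$ invariant. So I would not average products. Instead I would work directly with the invariant sections as a linear system.

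First, replace $\mathscr{L}$ by a power $\mathscr{L}_0^{k}$ of a $\Gamma$-linearized ample bundle $\mathscr{L}_0$, with $k$ chosen large. The key geometric input is that the invariant linear system $|\mathscr{L}|^\Gamma$ is base-point free for large divisible $k$. This holds because $H^0(Z,\mathscr{L})^\Gamma = H^0(Z/\Gamma, \mathscr{M})$, where $\mathscr{L}$ is the pullback of a line bundle $\mathscr{M}$ on $Z/\Gamma$ once $k$ is divisible by the orders of the stabilizers. This uses Kempf's descent criterion, which applies since the stabilizers act trivially on fibres of $\mathscr{L}_0^{|\Gamma|}$. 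Moreover $\mathscr{M}$ is ample, since $\mathscr{M}=\mathscr{M}_0^{k/|\Gamma|}$ with $\pi^*\mathscr{M}_0=\mathscr{L}_0^{|\Gamma|}$ ample and $\pi$ finite. Hence, taking $k$ large, $\mathscr{M}$ is very ample and globally generated.

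Second, set $V=H^0(Z,\mathscr{L})^\Gamma$. Since $V$ generates $\mathscr{L}=\pi^*\mathscr{M}$, the Koszul complex of $V\otimes\mathscr{O}_Z\to\mathscr{L}$ is exact. Tensoring with $\mathscr{F}\otimes\mathscr{L}^{q+1}$ and using Castelnuovo--Mumford style vanishing reduces the surjectivity to
\[
H^i\bigl(Z,\mathscr{F}\otimes\mathscr{L}^{q+1-i}\otimes \textstyle\bigwedge^{i+1}V^\vee\bigr)=0, \quad 1\le i\le \dim Z .
\]
More simply, one may use Mumford's lemma in the following form. If $\mathscr{G}$ is a coherent sheaf on the projective scheme $Z$, and $\mathscr{L}$ is globally generated by $V$ and $\mathscr{G}$ is $0$-regular with respect to $\mathscr{L}$ (that is, $H^i(\mathscr{G}\otimes\mathscr{L}^{-i})=0$ for $i>0$), then $V\otimes H^0(\mathscr{G})\to H^0(\mathscr{G}\otimes\mathscr{L})$ is surjective. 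This is the Koszul argument above, and it only needs $V$ to generate $\mathscr{L}$, not all of $H^0$.

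Third, apply this with $\mathscr{G}=\mathscr{F}\otimes\mathscr{L}^{q+1}$. The required vanishing is $H^i(Z,\mathscr{F}\otimes\mathscr{L}^{q+1-i})=0$ for $1\le i\le \dim Z$ and all $q\ge0$. Since $q+1-i\ge 1-\dim Z$, this is not automatic for negative exponents, and this is the main obstacle. To handle it, instead apply regularity with respect to $\mathscr{L}_0$ before taking powers. By Serre vanishing, choose $k_0$ with $H^i(\mathscr{F}\otimes\mathscr{L}_0^{j})=0$ for all $i>0$ and $j\ge k_0$. Now take $\mathscr{L}=\mathscr{L}_0^k$ with $k\ge k_0$ (and divisible as before), and run the Koszul argument with respect to $\mathscr{L}$ itself. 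The terms involved are $\mathscr{F}\otimes\mathscr{L}_0^{k(q+1-i)}$ twisted by exterior powers, and the problematic exponents are those $\le 0$.

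To avoid these, I would use the refined Koszul filtration argument: the kernel sheaf $\mathscr{K}$ of $V\otimes\mathscr{O}_Z\to\mathscr{L}$ is a $\Gamma$-equivariant vector bundle. The obstruction to surjectivity is $H^1(Z,\mathscr{K}\otimes\mathscr{F}\otimes\mathscr{L}^{q+1})$. Since $\mathscr{K}=\pi^*\mathscr{K}_M$, where $\mathscr{K}_M$ is the kernel bundle of $H^0(\mathscr{M})\otimes\mathscr{O}\to\mathscr{M}$ on $Z/\Gamma$, the bundle $\mathscr{K}$ is independent of $\mathscr{F}$. The vanishing then follows from $\mathscr{K}_{M}\otimes\mathscr{M}$ being globally generated, or more precisely $0$-regular, which holds for $\mathscr{M}$ a large power by standard results on syzygy bundles. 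This reduces the vanishing to Serre vanishing for $\mathscr{F}\otimes\mathscr{L}^{q}$ with $q\ge0$ twisted by a fixed bundle. Uniformity in $q\ge0$ then follows from Serre vanishing applied to finitely many sheaves once $k$ is large. Making this uniform choice of $k$ precise is the step I expect to need the most care.
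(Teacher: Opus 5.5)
There is a genuine gap, at the step you flagged: making the choice of $k$ uniform in $q$. Your first step, descending $\mathscr{L}$ to $\mathscr{M}$ on $Z/\Gamma$ so that $H^0(Z,\mathscr{L})^\Gamma=H^0(Z/\Gamma,\mathscr{M})$ generates $\mathscr{L}$, is exactly the paper's reduction. Your version of Mumford's lemma for a generating subspace $V$ is also correct.

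The syzygy-bundle step does not work as written, for three reasons.
\begin{itemize}
\item $\mathscr{K}\otimes\mathscr{L}=\pi^*(\mathscr{K}_M\otimes\mathscr{M})$ is not a fixed bundle. $\mathscr{K}_M$ is the syzygy bundle of $\mathscr{M}=\mathscr{M}_0^{k/|\Gamma|}$, so it changes with $k$, and ``Serre vanishing applied to finitely many sheaves'' cannot be used uniformly as $k$ grows.
\item Even for fixed $k$, the case $q=0$ asks for $H^1\bigl(Z,(\mathscr{K}\otimes\mathscr{L})\otimes\mathscr{F}\bigr)=0$ with no positive twist on $\mathscr{F}$. Global generation of $\mathscr{K}_M\otimes\mathscr{M}$ does not give this.
\item Once $H^1(\mathscr{F}\otimes\mathscr{L}^{q+1})=0$, the vanishing of $H^1(\mathscr{K}\otimes\mathscr{F}\otimes\mathscr{L}^{q+1})$ is equivalent to the surjectivity you want. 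So appealing to ``standard results on syzygy bundles'' only restates the claim.
\end{itemize}

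The missing idea is the one you mention and then abandon: apply Mumford's lemma with respect to the fixed bundle rather than $\mathscr{L}$, and then iterate.
\begin{itemize}
\item Take $\mathscr{M}_0$ globally generated and ample on $Z/\Gamma$, set $\mathscr{L}_1=\pi^*\mathscr{M}_0$, and let $V_1=H^0(Z,\mathscr{L}_1)^\Gamma$. Then $V_1$ generates $\mathscr{L}_1$.
\item By Serre vanishing, $\mathscr{F}$ is $m_0$-regular with respect to $\mathscr{L}_1$ for some $m_0$. Regularity propagates upward, so $V_1\otimes H^0(\mathscr{F}\otimes\mathscr{L}_1^{j})\to H^0(\mathscr{F}\otimes\mathscr{L}_1^{j+1})$ is onto for every $j\geq m_0$.
\item Composing $k$ of these maps, $H^0(\mathscr{F}\otimes\mathscr{L}_1^{j+k})$ is spanned by products $v_1\cdots v_k\cdot g$ with $v_i\in V_1$ and $g\in H^0(\mathscr{F}\otimes\mathscr{L}_1^{j})$.
\item Each $v_1\cdots v_k$ lies in $H^0(Z,\mathscr{L}_1^{k})^\Gamma$. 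Hence $H^0(Z,\mathscr{L}_1^k)^\Gamma\otimes H^0(\mathscr{F}\otimes\mathscr{L}_1^{j})\to H^0(\mathscr{F}\otimes\mathscr{L}_1^{j+k})$ is onto for all $j\geq m_0$.
\item Put $\mathscr{L}=\mathscr{L}_1^k$ with $k\geq m_0$ and $j=k(q+1)\geq m_0$. This gives the lemma for all $q\geq 0$ at once.
\end{itemize}

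This is what the paper's appeal to Mumford's theorem on $Z/\Gamma$, applied to $\pi_*\mathscr{F}$, amounts to. Staying on $Z$ with the invariant linear system instead of pushing forward is equivalent by the projection formula.
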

\begin{proof}
	After replacing $\mathscr{L}$ by a sufficiently high power, we may assume that $\mathscr{L}$ is the pullback of a line bundle $\overline{\mathscr{L}}$ on $\overline{Z}\coloneqq Z/\Gamma$.
	We then have that $H^0(Z,\mathscr{L})^{\Gamma}\cong H^0(\overline{Z},\overline{\mathscr{L}})$, and the multiplication map in the statement becomes the usual multiplication map
	\[
	H^0(\overline Z,\overline{\mathscr{L}})\otimes
	H^0\bigl(\overline Z,(\pi_*\mathscr{F})\otimes\overline{\mathscr{L}}^{q+1}\bigr)
	\longrightarrow
	H^0\bigl(\overline Z,(\pi_*\mathscr{F})\otimes\overline{\mathscr{L}}^{q+2}\bigr).
	\]
	After choosing $\overline{\mathscr{L}}$ sufficiently positive, these maps are
	surjective for every $q\geq0$ by Mumford's theorem \cite[Theorem 1.8.5]{lazarsfeld2017positivity}. This proves the lemma.
\end{proof}

Let $B\subset |\mathscr{L}|^\Gamma$ be a sufficiently small analytic
neighborhood of $[s]$ contained in the smooth locus, and let $f\colon \mathcal Y_B\longrightarrow B$
be the universal family. Let $\kappa\colon T_{[s]}B\longrightarrow H^1(Y,T_Y)$
be its Kodaira--Spencer map, and set
\[T_Y^\Gamma\coloneqq \operatorname{Im}(\kappa)
\]
to be the deformation directions coming
from the invariant linear system.
The infinitesimal variation of Hodge structure induces $\Gamma$-equivariant maps
\[
T_Y^\Gamma\otimes H^{p,q}(Y)
\longrightarrow H^{p-1,q+1}(Y),
\qquad
\xi\otimes\gamma\longmapsto\nabla_\xi\gamma.
\]
Define the infinitesimally fixed part to be
\[
H_{\mathrm{if},\Gamma}^{m,m}(Y)
:=
\ker\left(
H^{m,m}(Y)\longrightarrow
\operatorname{Hom}\bigl(
T_Y^\Gamma,H^{m-1,m+1}(Y)
\bigr)
\right).
\]

\begin{theorem}
	\label{thm: finite group infinitesimal noether}
	With the choice of a sufficiently positive $\mathscr{L}$ as above, we have
	\[
	H_{\mathrm{if},\Gamma}^{m,m}(Y)
	=
	H_{\mathrm{amb}}^{m,m}(Y).
	\]
\end{theorem}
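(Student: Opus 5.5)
The plan is to follow the Griffiths residue / Carlson--Griffiths approach of \cite{carlson1983infinitesimal}, adapted so that every deformation direction is taken from the invariant linear system $|\mathscr{L}|^\Gamma$. First identify the objects algebraically.

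The tangent space $T_{[s]}B$ is $H^0(Z,\mathscr{L})^\Gamma/\mathbb{C}s$. The Kodaira--Spencer map sends $h$ to the image of $h|_Y$ under the coboundary
\[
H^0(Y,\mathscr{L}|_Y)=H^0(Y,N_{Y/Z})\to H^1(Y,T_Y).
\]
For the variable part, use residues. For $\mathscr{L}$ sufficiently positive, vanishing theorems on $Z$ give the following description. The graded pieces $H^{2m-q,q}_{\mathrm{var}}(Y)$, i.e.\ the primitive cohomology modulo the ambient part, are the residues of forms $\omega/s^{q+1}$ with $\omega\in H^0(Z,K_Z\otimes\mathscr{L}^{q+1})$. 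They are isomorphic to the graded pieces of the Jacobian-type quotients
\[
R^{q}:=H^0(Z,K_Z\otimes\mathscr{L}^{q+1})\big/J_q .
\]
Here $J_q$ is spanned by $s\cdot H^0(K_Z\otimes \mathscr{L}^{q})$ and the "derivatives" of $s$, i.e.\ the image of $H^0(Z,K_Z\otimes\mathscr{L}^{q}\otimes \Sigma_{\mathscr{L}})$, where $\Sigma_{\mathscr{L}}$ is the sheaf of first-order differential operators on $\mathscr{L}$. Under this identification $\nabla_\xi$ for $\xi=\kappa(h)$ becomes multiplication by $h$ (up to sign): it sends the class of $\omega$ in $R^q$ to the class of $h\omega$ in $R^{q+1}$. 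All of this is $\Gamma$-equivariant because $s$ is invariant.

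Next, reduce the statement to a surjectivity/duality statement. Let $\gamma\in H^{m,m}(Y)$ be annihilated by all $\nabla_{\kappa(h)}$ with $h\in H^0(Z,\mathscr{L})^\Gamma$. Decompose $\gamma=\gamma_{\mathrm{amb}}+\gamma_{\mathrm{var}}$ orthogonally with respect to $Q$. The ambient part lies in the image of $H^{2m}(Z)$, which is flat over $B$, so $\nabla$ kills it. Hence $\gamma_{\mathrm{var}}$ is infinitesimally fixed, and it suffices to show $\gamma_{\mathrm{var}}=0$. Pair with $H^{m+1,m-1}_{\mathrm{var}}$ via $Q$. The transversality relation $Q(\nabla_\xi\alpha,\beta)+Q(\alpha,\nabla_\xi\beta)=0$ shows that $\gamma_{\mathrm{var}}$ is orthogonal to $\nabla_\xi H^{m+1,m-1}_{\mathrm{var}}$ for all invariant $\xi$. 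It therefore suffices to prove that
\[
H^0(Z,\mathscr{L})^\Gamma\otimes R^{m-1}\longrightarrow R^{m}
\]
is surjective, i.e.\ that invariant directions already span $H^{m,m}_{\mathrm{var}}$ from $H^{m+1,m-1}_{\mathrm{var}}$. Combined with nondegeneracy of $Q$ on the variable part, this gives $\gamma_{\mathrm{var}}=0$. That surjectivity follows from Lemma~\ref{lem: finite group multiplication} applied with $\mathscr{F}=K_Z$ (which is canonically $\Gamma$-linearized) and $q=m-1$. The lemma gives surjectivity of
\[
H^0(\mathscr{L})^\Gamma\otimes H^0(K_Z\otimes\mathscr{L}^{m})\to H^0(K_Z\otimes\mathscr{L}^{m+1})
\]
already before taking quotients by $J$, and the quotient maps are compatible. (For $m=0$ one has nothing to prove, since $H^{m-1,m+1}=0$ forces $H^{0,0}$ to be ambient trivially.)

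The main obstacle I expect is the residue description itself: establishing that, for $\mathscr{L}$ sufficiently positive on an arbitrary $Z$, the variable Hodge pieces are exactly the quotients $R^q$ and that $\nabla$ is multiplication. This requires the vanishings
\[
H^i(Z,\Omega^j_Z\otimes\mathscr{L}^k)=0 \quad (i>0,\ k>0),
\]
which hold by Serre vanishing after raising the power, since only finitely many $(j,k)$ are relevant in middle degree. It also requires identifying the Jacobian ideal through $\Sigma_{\mathscr{L}}$, as in \cite{carlson1983infinitesimal}. I would quote that construction essentially verbatim and only check $\Gamma$-equivariance. The one genuinely new point, that only invariant $h$ are allowed, is absorbed entirely by Lemma~\ref{lem: finite group multiplication}, which needs the positivity of $\mathscr{L}$ to be fixed before $Y$ is chosen.
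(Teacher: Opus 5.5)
Your proposal is correct for $m\geq 1$ and is essentially the paper's proof. Residues of $u/s^{q+1}$ surject onto $H^{2m-q,q}_{\mathrm{var}}(Y)$, and $\nabla_{\xi_t}$ becomes multiplication by $t$ up to the factor $-(q+1)$. Lemma~\ref{lem: finite group multiplication} with $\mathscr{F}=\omega_Z$ then gives surjectivity of $T_Y^\Gamma\otimes H^{m+1,m-1}_{\mathrm{var}}(Y)\to H^{m,m}_{\mathrm{var}}(Y)$, and adjointness of $\nabla$ for $Q$ forces $\gamma_{\mathrm{var}}=0$. Your explicit splitting $\gamma=\gamma_{\mathrm{amb}}+\gamma_{\mathrm{var}}$ just spells out a step the paper leaves implicit.

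Two side remarks need correcting.

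First, the residue map onto $H^{2m-q,q}_{\mathrm{var}}(Y)$ is in general only surjective, not an isomorphism. In the exact sequence the paper quotes, its kernel is $H^{2m+1-q,q}_{\mathrm{prim}}(Z)$, which need not vanish: for instance $H^{2,1}_{\mathrm{prim}}\neq 0$ for an abelian threefold. Surjectivity is all you use, so this does not affect the argument.

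Second, your $m=0$ aside is wrong. There $Y$ is a set of $\deg\mathscr{L}>1$ points, so all of $H^{0,0}(Y)$ is infinitesimally fixed while only the constants are ambient. The statement therefore genuinely requires $m\geq 1$, which covers the only case used, $\dim Z=3$.
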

\begin{proof}
	We follow the proof of the infinitesimal M.~Noether theorem in
	\cite[\S\,3(a)]{carlson1983infinitesimal}, replacing the full linear system
	by the invariant linear system. The inclusion $H_{\mathrm{amb}}^{m,m}(Y)
	\subset
	H_{\mathrm{if},\Gamma}^{m,m}(Y)$ is clear, since every class in $H_{\mathrm{amb}}^{m,m}(Y)$ is the restriction of a class of type $(m,m)$ on $Z$, thus it remains of that type along any deformation inside $Z$.
	
	We prove the opposite inclusion. Set
	\[
	M_q
	\coloneqq 
	H^0\bigl(Z,\omega_Z\otimes \mathscr{L}^{q+1}\bigr)
	=
	H^0\bigl(Z,\omega_Z((q+1)Y)\bigr).
	\]
	Applying Lemma~\ref{lem: finite group multiplication} to \(\mathscr{F}=\omega_Z\), we
	obtain surjective multiplication maps
	\begin{equation}\label{eq: finite group multiplication canonical}
		H^0(Z,\mathscr{L})^\Gamma\otimes M_q
		\twoheadrightarrow
		M_{q+1}
	\end{equation}
	for every \(q\geq0\).	By \cite[(3.a.8)]{carlson1983infinitesimal}, we have the following short exact sequence
	\begin{equation}\label{eq: finite group residue}
		0
		\longrightarrow
		H_{\mathrm{prim}}^{2m+1-q,q}(Z)
		\longrightarrow
		\frac{
			H^0\bigl(Z,\omega_Z((q+1)Y)\bigr)
		}{
			dH^0\bigl(Z,\Omega_Z^{2m}(qY)\bigr)
			+
			H^0\bigl(Z,\omega_Z(qY)\bigr)
		}
		\xrightarrow{\operatorname{Res}}
		H_{\mathrm{var}}^{2m-q,q}(Y)
		\longrightarrow
		0.
	\end{equation}
	All the maps in \eqref{eq: finite group residue} are $\Gamma$-equivariant.
	
	Let $t\in H^0(Z,\mathscr{L})^\Gamma$, and let $\xi_t\in T_Y^\Gamma$
	be the corresponding Kodaira--Spencer class. An element
	$u\in M_q$ determines the meromorphic top form $\frac{u}{s^{q+1}}$ on $Z$, whose residue defines a class in $H_{\mathrm{var}}^{2m-q,q}(Y)$.	Consider the first-order deformation of \(Y\) defined by $s+\epsilon t$.
	We have
	\[
	\left.
	\frac{d}{d\epsilon}
	\right|_{\epsilon=0}
	\frac{u}{(s+\epsilon t)^{q+1}}
	=
	-(q+1)\frac{tu}{s^{q+2}}.
	\]
	Consequently, the infinitesimal variation of Hodge structure satisfies
	\begin{equation}\label{eq: finite group infinitesimal residue}
		\nabla_{\xi_t}
		\operatorname{Res}\left(\frac{u}{s^{q+1}}\right)
		=
		-(q+1)
		\operatorname{Res}\left(\frac{tu}{s^{q+2}}\right).
	\end{equation}
	Combining \eqref{eq: finite group multiplication canonical}, \eqref{eq: finite group residue}, and  \eqref{eq: finite group infinitesimal residue}, we have that the morphism 
	\begin{equation}\label{eq: finite group ivhs surjective}
		T_Y^\Gamma\otimes
		H_{\mathrm{var}}^{2m-q,q}(Y)
		\longrightarrow
		H_{\mathrm{var}}^{2m-q-1,q+1}(Y)
	\end{equation}
	is surjective.	Take any class $\gamma\in H_{\mathrm{if},\Gamma}^{m,m}(Y)\cap H_{\mathrm{var}}^{m,m}(Y)$.
	By \eqref{eq: finite group ivhs surjective}, every class $\eta\in H_{\mathrm{var}}^{m,m}(Y)$
	can be written as
	\[
	\eta=\sum_i\nabla_{\xi_i}\beta_i,
	\qquad
	\xi_i\in T_Y^\Gamma,
	\quad
	\beta_i\in H_{\mathrm{var}}^{m+1,m-1}(Y).
	\]
	Using adjointness, see
	\cite[p.~178, the displayed formula following (3.a.19)]
	{carlson1983infinitesimal}, we have that 
	\[
	\begin{aligned}
		Q(\gamma,\eta)
		=
		\sum_i
		Q(\gamma,\nabla_{\xi_i}\beta_i)
		=
		-\sum_i
		Q(\nabla_{\xi_i}\gamma,\beta_i)=
		0.
	\end{aligned}
	\]
	Therefore, $\gamma=0$, and the theorem is proved.
\end{proof}

\begin{proof}[Proof of Theorem \ref{thm: finite group noether lefschetz}]
	By the algebraicity theorem of Cattani--Deligne--Kaplan
	\cite[Theorem~1.1 and Corollary~1.3]
	{CattaniDeligneKaplan1995}, the Hodge locus of each fixed integral
	class is algebraic, so that the locus of
	$b\in B$ such that  $H_{\mathrm{var}}^{2m}(Y_b,\mathbb Q)
	\cap H^{m,m}(Y_b)\neq0$
	is a countable union of closed algebraic subsets of $B$.
	
	It suffices to show that the closed subsets are not all of $B$.
	Suppose otherwise, then, after possibly shrinking $B$ to make it simply connected, there exists a nonzero $\gamma_b\in H_{\mathrm{var}}^{2m}(Y_b,\mathbb Q)$ whose parallel transport remains of type $(m,m)$.
	Therefore, $\gamma_b\in H_{\mathrm{if},\Gamma}^{m,m}(Y_b)$.
	By Theorem~\ref{thm: finite group infinitesimal noether},
	we have that $\gamma_b=0$, contradiction.
\end{proof}

\bibliographystyle{plain}
\bibliography{references}

@article{CattaniDeligneKaplan1995,
  author  = {Cattani, Eduardo and Deligne, Pierre and Kaplan, Aroldo},
  title   = {On the locus of {H}odge classes},
  journal = {Journal of the American Mathematical Society},
  volume  = {8},
  number  = {2},
  pages   = {483--506},
  year    = {1995},
  doi     = {10.1090/S0894-0347-1995-1273413-3},
  eprint  = {alg-geom/9402009},
  archivePrefix = {arXiv}
}

@article{LangTate1958,
  author  = {Lang, Serge and Tate, John},
  title   = {Principal Homogeneous Spaces over Abelian Varieties},
  journal = {American Journal of Mathematics},
  volume  = {80},
  number  = {3},
  year    = {1958},
  pages   = {659--684},
  doi     = {10.2307/2372778}
}

@article{CiperianiKrashen2012,
  author        = {Ciperiani, Mirela and Krashen, Daniel},
  title         = {Relative {Brauer} Groups of Genus 1 Curves},
  journal       = {Israel Journal of Mathematics},
  volume        = {192},
  number        = {2},
  year          = {2012},
  pages         = {921--949},
  doi           = {10.1007/s11856-012-0057-5},
  eprint        = {math/0701614},
  archivePrefix = {arXiv}
}

@book{BoschLutkebohmertRaynaud1990,
  author    = {Bosch, Siegfried and L{\"u}tkebohmert, Werner
               and Raynaud, Michel},
  title     = {N{\'e}ron Models},
  series    = {Ergebnisse der Mathematik und ihrer Grenzgebiete.
               3. Folge},
  volume    = {21},
  publisher = {Springer-Verlag},
  address   = {Berlin},
  year      = {1990},
  doi       = {10.1007/978-3-642-51438-8}
}

@article{maulik2021endoscopic,
  author  = {Maulik, Davesh and Shen, Junliang},
  title   = {Endoscopic decompositions and the {Hausel--Thaddeus} conjecture},
  journal = {Forum of Mathematics, Pi},
  volume  = {9},
  pages   = {Paper No. e8, 49},
  year    = {2021},
  doi     = {10.1017/fmp.2021.7}
}

@article{Namikawa1985,
  author  = {Namikawa, Yukihiko},
  title   = {Periods of {Enriques} surfaces},
  journal = {Mathematische Annalen},
  volume  = {270},
  number  = {2},
  pages   = {201--222},
  year    = {1985},
  doi     = {10.1007/BF01456182}
}

@book{Hartshorne1977,
  author    = {Hartshorne, Robin},
  title     = {Algebraic Geometry},
  series    = {Graduate Texts in Mathematics},
  volume    = {52},
  publisher = {Springer-Verlag},
  address   = {New York--Heidelberg},
  year      = {1977},
  doi       = {10.1007/978-1-4757-3849-0}
}

@book{GilleSzamuely2017,
  author    = {Gille, Philippe and Szamuely, Tam{\'a}s},
  title     = {Central Simple Algebras and {Galois} Cohomology},
  edition   = {Second},
  series    = {Cambridge Studies in Advanced Mathematics},
  volume    = {165},
  publisher = {Cambridge University Press},
  address   = {Cambridge},
  year      = {2017},
  doi       = {10.1017/9781316661277}
}

@Article{Flenner1981,
  author     = {Flenner, Hubert},
  title      = {Divisorenklassengruppen quasihomogener {S}ingularit\"aten},
  journal    = {J. Reine Angew. Math.},
  year       = {1981},
  volume     = {328},
  pages      = {128--160},
  fjournal   = {Journal f\"ur die Reine und Angewandte Mathematik},
  issn       = {0075-4102},
  mrclass    = {13F15 (13G05 14B05 14C99)},
  mrnumber   = {636200},
  mrreviewer = {Gerd Faltings},
  url        = {https://doi.org/10.1515/crll.1981.328.128},
}

@article{carlson1983infinitesimal,
  title={Infinitesimal variations of Hodge structure (I)},
  author={Carlson, James and Green, Mark and Griffiths, Phillip and Harris, Joe},
  journal={Compositio Mathematica},
  volume={50},
  number={2-3},
  pages={109--205},
  year={1983}
}

@Article{RavindraSrinivas2009,
  author  = {Ravindra, Girivaru V. and Srinivas, Vasudevan},
  journal = {J. Algebra},
  title   = {The {N}oether--{L}efschetz theorem for the divisor class group},
  year    = {2009},
  number  = {9},
  pages   = {3373--3391},
  volume  = {322},
  doi     = {10.1016/j.jalgebra.2008.09.003},
}

@book{lazarsfeld2017positivity,
  title={Positivity in algebraic geometry I: Classical setting: line bundles and linear series},
  author={Lazarsfeld, Robert K},
  volume={48},
  year={2017},
  publisher={Springer}
}

@misc {SheshmaniWangXia2026,
    AUTHOR = {Sheshmani, Artan and Wang, Jianping and Xia, Xiaopeng},
     TITLE = {On the image of {H}itchin morphism for some classical groups
              on algebraic surfaces},
      NOTE = {arXiv:2606.17505},
      YEAR = {2026},
       DOI = {10.48550/arXiv.2606.17505},
}

@Article{Hartshorne1980,
  author     = {Hartshorne, Robin},
  title      = {Stable reflexive sheaves},
  journal    = {Math. Ann.},
  year       = {1980},
  volume     = {254},
  number     = {2},
  pages      = {121--176},
  coden      = {MAANA3},
  doi        = {10.1007/BF01467074},
  fjournal   = {Mathematische Annalen},
  issn       = {0025-5831},
  mrclass    = {14F05 (14D22)},
  mrnumber   = {597077},
  mrreviewer = {Klaus Hulek},
  url        = {http://dx.doi.org/10.1007/BF01467074},
}

@article{BeauvilleNarasimhanRamanan1989,
  author   = {Beauville, Arnaud and Narasimhan, Mudumbai S. and Ramanan, Sundararaman},
  title    = {Spectral curves and the generalised {T}heta divisor},
  journal  = {J. Reine Angew. Math.},
  fjournal = {Journal f{\"u}r die reine und angewandte Mathematik},
  volume   = {398},
  year     = {1989},
  pages    = {169--179},
  doi      = {10.1515/crll.1989.398.169}
}

@article{ChenNgo2020,
  author   = {Chen, Tsao-Hsien and Ng{\^o}, Bao Ch{\^a}u},
  title    = {On the {H}itchin morphism for higher-dimensional varieties},
  journal  = {Duke Math. J.},
  fjournal = {Duke Mathematical Journal},
  volume   = {169},
  year     = {2020},
  number   = {10},
  pages    = {1971--2004},
  doi      = {10.1215/00127094-2019-0085}
}

@article{HeLiu2024,
  author   = {He, Siqi and Liu, Jie},
  title    = {On the spectral variety for rank two {H}iggs bundles},
  journal  = {Proc. Lond. Math. Soc. (3)},
  fjournal = {Proceedings of the London Mathematical Society. Third Series},
  volume   = {129},
  year     = {2024},
  number   = {5},
  pages    = {Paper No. e70004, 45},
  doi      = {10.1112/plms.70004}
}

@misc{HeLiuMok2024,
  author        = {He, Siqi and Liu, Jie and Mok, Ngaiming},
  title         = {The spectral base and quotients of bounded symmetric domains by cocompact lattices},
  year          = {2024},
  eprint        = {2401.15852},
  archivePrefix = {arXiv},
  primaryClass  = {math.AG},
  note          = {arXiv:2401.15852}
}

@article{Hitchin1987,
  author   = {Hitchin, Nigel J.},
  title    = {Stable bundles and integrable systems},
  journal  = {Duke Math. J.},
  fjournal = {Duke Mathematical Journal},
  volume   = {54},
  year     = {1987},
  number   = {1},
  pages    = {91--114},
  doi      = {10.1215/S0012-7094-87-05408-1}
}

@article{Huynh2025,
  author        = {Huynh, Matthew},
  title         = {The {H}itchin morphism for certain surfaces fibered over a curve},
  journal       = {Pure Appl. Math. Q.},
  fjournal      = {Pure and Applied Mathematics Quarterly},
  volume        = {21},
  year          = {2025},
  number        = {6},
  pages         = {2257--2286},
  eprint        = {2504.01874},
  archivePrefix = {arXiv},
  primaryClass  = {math.AG}
}

@misc{PatelWeissmann2026a,
  author        = {Patel, Aryaman and Weissmann, Dario},
  title         = {Stratifying moduli spaces of {H}iggs bundles and the {H}itchin morphism},
  year          = {2026},
  eprint        = {2601.08597},
  archivePrefix = {arXiv},
  primaryClass  = {math.AG},
  note          = {arXiv:2601.08597}
}

@misc{PatelWeissmann2026b,
  author        = {Patel, Aryaman and Weissmann, Dario},
  title         = {The {H}itchin morphism for {$K$}-trivial varieties},
  year          = {2026},
  eprint        = {2604.03217},
  archivePrefix = {arXiv},
  primaryClass  = {math.AG},
  note          = {arXiv:2604.03217}
}

@Article{Kawamata1981,
  author     = {Kawamata, Yujiro},
  journal    = {Compositio Math.},
  title      = {Characterization of abelian varieties},
  year       = {1981},
  issn       = {0010-437X,1570-5846},
  number     = {2},
  pages      = {253--276},
  volume     = {43},
  fjournal   = {Compositio Mathematica},
  mrclass    = {14J10 (32J15)},
  mrnumber   = {622451},
  mrreviewer = {Daniel\ Comenetz},
  url        = {http://www.numdam.org/item?id=CM_1981__43_2_253_0},
}

@Article{Sun2026,
  author  = {Hao Sun},
  journal = {arXiv preprint arXiv:2609.06933},
  title   = {Cyclic Spectral Data},
  year    = {2026},
}

@misc{SheshmaniXiaYuan2025,
  author        = {Sheshmani, Artan and Xia, Xiaopeng and Yuan, Beihui},
  title         = {On the normality of commuting scheme for general linear {L}ie algebra},
  year          = {2025},
  eprint        = {2505.13013},
  archivePrefix = {arXiv},
  primaryClass  = {math.AG},
  note          = {arXiv:2505.13013}
}

@article{Simpson1992,
  author   = {Simpson, Carlos T.},
  title    = {Higgs bundles and local systems},
  journal  = {Inst. Hautes {\'E}tudes Sci. Publ. Math.},
  fjournal = {Publications Math{\'e}matiques de l'Institut des Hautes {\'E}tudes Scientifiques},
  volume   = {75},
  year     = {1992},
  pages    = {5--95},
  doi      = {10.1007/BF02699491}
}

@article{SongSun2024,
  author   = {Song, Lei and Sun, Hao},
  title    = {On the image of {H}itchin morphism for algebraic surfaces: the case {$\mathrm{GL}_n$}},
  journal  = {Int. Math. Res. Not. IMRN},
  fjournal = {International Mathematics Research Notices. IMRN},
  year     = {2024},
  number   = {1},
  pages    = {492--514},
  doi      = {10.1093/imrn/rnad043}
}

@article{SongXiaXu2023,
  author   = {Song, Lei and Xia, Xiaopeng and Xu, Jinxing},
  title    = {A higher-dimensional {C}hevalley restriction theorem for orthogonal groups},
  journal  = {Adv. Math.},
  fjournal = {Advances in Mathematics},
  volume   = {426},
  year     = {2023},
  pages    = {Paper No. 109104},
  doi      = {10.1016/j.aim.2023.109104}
}

@misc{StacksProject,
  author = {{The Stacks Project Authors}},
  title  = {The Stacks Project},
  url    = {https://stacks.math.columbia.edu}
}

@article{Donaldson1987,
  author   = {Donaldson, Simon Kirwan},
  title    = {Twisted harmonic maps and the self-duality equations},
  journal  = {Proc. London Math. Soc. (3)},
  fjournal = {Proceedings of the London Mathematical Society. Third Series},
  volume   = {55},
  year     = {1987},
  number   = {1},
  pages    = {127--131},
  doi      = {10.1112/plms/s3-55.1.127}
}

@article{Hitchin1987SelfDuality,
  author   = {Hitchin, Nigel J.},
  title    = {The self-duality equations on a {R}iemann surface},
  journal  = {Proc. London Math. Soc. (3)},
  fjournal = {Proceedings of the London Mathematical Society. Third Series},
  volume   = {55},
  year     = {1987},
  number   = {1},
  pages    = {59--126},
  doi      = {10.1112/plms/s3-55.1.59}
}

@article{Corlette1988,
  author   = {Corlette, Kevin},
  title    = {Flat {$G$}-bundles with canonical metrics},
  journal  = {J. Differential Geom.},
  fjournal = {Journal of Differential Geometry},
  volume   = {28},
  year     = {1988},
  number   = {3},
  pages    = {361--382}
}

@article{Simpson1988,
  author   = {Simpson, Carlos T.},
  title    = {Constructing variations of {H}odge structure using {Yang--Mills} theory and applications to uniformization},
  journal  = {J. Amer. Math. Soc.},
  fjournal = {Journal of the American Mathematical Society},
  volume   = {1},
  year     = {1988},
  number   = {4},
  pages    = {867--918},
  doi      = {10.2307/1990994}
}

@incollection{Simpson1991,
  author    = {Simpson, Carlos T.},
  title     = {The ubiquity of variations of {H}odge structure},
  booktitle = {Complex geometry and Lie theory},
  series    = {Proc. Sympos. Pure Math.},
  volume    = {53},
  publisher = {Amer. Math. Soc.},
  address   = {Providence, RI},
  year      = {1991},
  pages     = {329--348}
}

@article{Simpson1994a,
  author   = {Simpson, Carlos T.},
  title    = {Moduli of representations of the fundamental group of a smooth projective variety. {I}},
  journal  = {Inst. Hautes {\'E}tudes Sci. Publ. Math.},
  fjournal = {Publications Math{\'e}matiques de l'Institut des Hautes {\'E}tudes Scientifiques},
  volume   = {79},
  year     = {1994},
  pages    = {47--129}
}

@incollection{Simpson1997,
  author    = {Simpson, Carlos T.},
  title     = {The {Hodge} filtration on nonabelian cohomology},
  booktitle = {Algebraic geometry---Santa Cruz 1995},
  series    = {Proc. Sympos. Pure Math.},
  volume    = {62},
  publisher = {Amer. Math. Soc.},
  address   = {Providence, RI},
  year      = {1997},
  pages     = {217--281}
}

@Book{Lazarsfeld2004,
  author    = {Lazarsfeld, Robert},
  publisher = {Berlin: Springer},
  title     = {Positivity in algebraic geometry. {I}. {Classical} setting: line bundles and linear series},
  year      = {2004},
  isbn      = {3-540-22533-1},
  series    = {Ergeb. Math. Grenzgeb., 3. Folge},
  volume    = {48},
  fseries   = {Ergebnisse der Mathematik und ihrer Grenzgebiete. 3. Folge},
  issn      = {0071-1136},
  language  = {English},
  zbl       = {1093.14501},
  zbmath    = {2134816},
}

@Article{Ma2019,
  author     = {Ma, Linquan},
  journal    = {Comm. Algebra},
  title      = {Maximal {C}ohen-{M}acaulay modules over certain {S}egre products},
  year       = {2019},
  issn       = {0092-7872,1532-4125},
  number     = {6},
  pages      = {2488--2493},
  volume     = {47},
  doi        = {10.1080/00927872.2018.1444173},
  fjournal   = {Communications in Algebra},
  mrclass    = {13C14 (13H10)},
  mrnumber   = {3957111},
  mrreviewer = {J.\ K.\ Verma},
  url        = {https://doi.org/10.1080/00927872.2018.1444173},
}

@Book{Serre1965,
  author     = {Serre, Jean-Pierre},
  publisher  = {Springer-Verlag, Berlin-New York},
  title      = {Alg{\`e}bre locale. {M}ultiplicit{\'e}s},
  year       = {1965},
  note       = {Cours au Coll{\`e}ge de France, 1957--1958, r{\'e}dig{\'e} par Pierre Gabriel, Seconde {\'e}dition, 1965},
  series     = {Lecture Notes in Mathematics},
  volume     = {11},
  mrclass    = {13.95 (14.08)},
  mrnumber   = {0201468},
  mrreviewer = {M. Nagata},
  pages      = {vii+188 pp. (not consecutively paged)},
}

@Article{KleimanLandolfi1971,
  author     = {Kleiman, Steven L. and Landolfi, John},
  journal    = {Compositio Math.},
  title      = {Geometry and deformation of special {S}chubert varieties},
  year       = {1971},
  issn       = {0010-437X,1570-5846},
  pages      = {407--434},
  volume     = {23},
  fjournal   = {Compositio Mathematica},
  mrclass    = {14M15 (14M05)},
  mrnumber   = {314855},
  mrreviewer = {Allen\ B.\ Altman},
}

\end{document}